\theoremstyle{thmstyleone}%
\newtheorem{theo}{Theorem}[section]
\newtheorem{lemma}[theo]{Lemma}
\newtheorem{coro}[theo]{Corollary}
\theoremstyle{thmstyletwo}%
\theoremstyle{thmstylethree}%
\newtheorem{defi}{Definition}%
\newtheorem{exa}{Example}%
\newcommand{\1}{\mathbbm{1}} 
\newcommand{\bal}[1]{\begin{align*}#1\end{align*}}
\newcommand{\R}{\mathbb{R}}
\newcommand{\E}{\mathbb{E}}
\newcommand{\bP}{\mathbb{P}}
\renewcommand{\P}{\bP} 
\newcommand{\eps}{\varepsilon}
\newcommand{\Var}{\mathbb{V}\mathrm{ar}}
\newcommand{\Cov}{\mathrm{Cov}}
\newcommand{\FF}{\mathcal F}
\newcommand{\PP}{\mathcal P}
\newcommand{\wh}{\widehat}
\def\pcv{\stackrel{\scriptscriptstyle \P}{\longrightarrow}}        
\begin{document}

\title[Slim and fat trees]{Scaling limits of slim and fat trees}

\author*[1]{\fnm{Vladislav} \sur{Kargin}}\email{vkargin@binghamton.edu}

\affil*[1]{\orgdiv{Department of Mathematics and Statistics}, \orgname{Binghamton University}, \orgaddress{\street{4400 Vestal Pkwy East}, \city{Binghamton}, \postcode{13902}, \state{NY}, \country{USA}}}


\abstract{We consider Galton--Watson trees conditioned on both the total number of vertices $n$ and the number of leaves $k$. The focus is on the case in which both $k$ and $n$ grow to infinity and $k = \alpha n + O(1)$, with $\alpha \in (0, 1)$. Assuming the exponential decay of the offspring distribution, we show that the rescaled random tree converges in distribution to Aldous' Continuum Random Tree with respect to the Gromov--Hausdorff topology. The scaling depends on a parameter $\sigma^\ast$ which we calculate explicitly. Additionally, we compute the limit for the degree sequences of these random trees.}

\keywords{Random tree, Galton--Watson tree, Continuum random tree, Scaling limit, Contour process}


\pacs[MSC Classification]{Primary 60J80, Secondary 60C05 05C05}

\maketitle

\section{Introduction}
In this paper, we investigate the Galton--Watson (``GW'') trees conditioned both on the total number of vertices and on the number of leaves, when the total number of vertices is large and the number of leaves is far from its expected value. For a situation with small number of leaves, we can expect long leafless branches, while in the situation when a tree has a large number of leaves, the branches are short and bushy,  and this is why we call these trees ``slim'' and ``fat,'' respectively.

The general setup is as follows.  Let $L$ be a non-negative, integer-valued random variable with the probability mass function $\P(L = i) = w_i$, $i = 0, 1, 2, \ldots$, which is assumed to satisfy the following conditions:
\begin{equation}
\label{basicCondition}
\E L = 1, \text{ and for some $a > 0$, } \E e^{a L} < \infty. 
\end{equation}
We also assume that the distribution of $L$ is not trivial: $w_1 \ne 1$. 
 
A random tree $T$ is the family tree of a Galton--Watson process with one individual in generation 0 and with offspring of each individual distributed independently as the random variable $L$. We use ordered trees so that the information on the birth order of family tree branches is known. For a formal description, see Sect. 3 in the paper of Le Gall and Le Jan \cite{legall_lejan98}.

The probability space $\Omega$ is the space of all ordered rooted trees with the probability distribution $\PP$ induced by the distribution of random variable $L$. We write  $\Omega_n\subset \Omega$ for trees with $n$ vertices and $\Omega_{k, n} \subset \Omega_n$ for trees with $k$ leaves and $n$ vertices.  We say that  \emph{$(k, n) \in PP(w)$} for an offspring distribution $w = (w_0, w_1, \ldots)$ if the set of trees $\Omega_{k,n}$ has positive probability. In this case, both $\Omega_n$ and $\Omega_{k,n}$ are endowed with conditional probability laws $\PP_n$ and $\PP_{k,n}$ induced from law $\PP$ on $\Omega$.

Note that a probability distribution $(w_j)_{j = 0}^\infty$ can be replaced by an equivalent probability distribution $(\tilde w_j)_{j = 0}^\infty$ without changing the measure on $\Omega_n$ (see Sect. 4 in Janson \cite{janson2012}). In particular, by Remark 4.3 in \cite{janson2012}, any equivalent probability distribution is determined by its expectation, and possible expectations  are in $(0, \nu]$, where $\nu$ is a certain parameter. It is easy to check by using the definition of $\nu$ (see formulas (3.8) - (3.10) in  \cite{janson2012}) that if $(w_j)_{j = 0}^\infty$  has an exponentially declining tail then $\nu \geq 1$. Therefore, for every such  $(w_j)_{j = 0}^\infty$ we can always find an equivalent probability distribution $(\tilde w_j)_{j = 0}^\infty$ so that its expectation is $1$. In particular, the condition $\E L = 1$ in (\ref{basicCondition}) does not impose an additional restriction on measure of trees in $\Omega_n$ and $\Omega_{k, n}$ but is used only to fix a specific equivalent weight distribution. 

We consider the case  when the number of leaves $k$ approximately proportional to the number of vertices $n$, more specifically, $k = \alpha n + O(1)$, where $\alpha \in (0, 1)$. Recall that if the offspring distribution $\{w_i\}_{i = 0}^\infty$ has finite variance, then the expected number of leaves for a tree in $\Omega_n$ is $\sim w_0 n$ (see Theorem II.5.1 in \cite{kolchin84}). We are mainly interested in the case when $\alpha \ne w_0$.

\subsection{Main Results}

In Theorem \ref{theoTreeConvergence}, we will show that as $n \to \infty$, a rescaled tree in $\Omega_{k, n}$ converges in the Gromov--Hausdorff topology to a random metric space called the Continuum Random Tree (``CRT''). This space was first introduced by Aldous in \cite{aldous91a}, \cite{aldous91b}, \cite{aldous93} and for details of the definition, we refer the reader to Sect. 2 in Le Gall's survey \cite{legall2005}. In order to formulate the result precisely, we introduce some additional notation in the next subsection.

\subsubsection{$\alpha$-Shifted Offspring Distribution}  Let $\theta(t) := \sum_{i = 0}^\infty w_i t^i$, where $\{w_i\}$ is the offspring distribution, and let $\rho$ denote the radius of convergence for this series. By assumption (\ref{basicCondition}),
$\sum_{i = 0}^\infty w_i = 1$ and $\sum_{i = 1}^\infty i w_i = 1.$
Therefore, $\theta(1) =  1$ and $\theta'(1) = 1$. In particular, $1 \leq\rho \leq \infty$. The assumption $w_1 \ne 1$ implies additionally that  $w_0 \ne 0$.

Define
\begin{equation}
\label{defiPsi}
\wh \psi(t) :=  \frac{t\theta'(t)}{\theta(t) - w_0}.
\end{equation}
(Janson in \cite{janson2012} uses $\psi(t)$ to denote $\frac{t\theta'(t)}{\theta(t)}$.)
In Lemma \ref{lemmaMonotonicity}, we show that $\wh \psi(t)$ is increasing on the interval $[0, \rho)$.
Hence, we can define  
\begin{equation}
\label{defiNu}
\wh \nu := \lim_{t \uparrow \rho} \wh \psi(t)  \in [1/(1 - w_0), \infty].
\end{equation}
($\wh\nu \geq 1/(1 - w_0)$ because $\wh\psi(1) = 1/(1 - w_0)$ and $\rho \geq 1$.)

%

\begin{defi}
\label{defiAlphaShift}
Let $\alpha \in (0, 1)$, and let $w_i^\ast$ be a probability mass distribution with unit mean, such that  $w_0^\ast = \alpha$ and  $w_j^\ast = C w_j (t^\ast)^{j - 1}$ for all $j > 0$ and for some $C>0$ and $t^\ast>0$.  We call $w_0^\ast, w_1^\ast, \ldots$, the \emph{$\alpha$-shifted offspring distribution}.  Let $L^\ast$ denote a random variable that have the distribution $\P(L^\ast = j) = w_j^\ast$ for $j = 0, 1, 2, \ldots$. 
\end{defi}
In other words,  in the $\alpha$-shifted distribution, $w_0^\ast$ is set to $\alpha$ and the remaining weights are exponentially re-weighted in such a way that the result is still a probability mass distribution with unit mean. It is easy to check that $C$ and $t^\ast$ can be determined from the following equations, 
\begin{equation}
\label{equForAlphaShift}
(1 - \alpha) t^\ast \theta'(t^\ast) = \theta(t^\ast) - w_0, \text{ and }
C = \frac{1}{\theta'(t^\ast)}.
\end{equation}
In Lemma \ref{lemmaExistence}, we show that these equations are solvable if $\alpha < 1 - 1/\wh\nu$. (In particular, they are always solvable for $\alpha \leq w_0$. Intuitively,  $\alpha > 1 - 1/\wh\nu$ corresponds to the situation when the number of leaves is not sustainable by the structure of the offspring distribution.)

In Lemma \ref{lemmaVarianceAlpha}, we show that the variance of the $\alpha$-shifted distribution $w_j^\ast$ is 
\begin{equation}
\label{variance}
(\sigma^\ast)^2 = \frac{t^\ast \theta''(t^\ast)}{\theta'(t^\ast)}.
\end{equation}

\subsubsection{Rescaled Convergence in the Gromov--Hausdorff Metric}
Let us consider trees in $\Omega_{k,n}$ as metric spaces with a marked point (`root') by using the usual graph theoretic distance between vertices and extending it to the edges linearly. We will denote the distance between points $u$ and $v$ as $d(u, v)$. 

Recall the definition of the Gromov--Hausdorff metric. If $(E, \delta)$ is a metric space, then the Hausdorff distance between compact subsets of $E$ is defined as follows: 
\bal{
\delta_{\textrm{Haus}} (K, K') = \inf\{\alpha > 0: K \subset U_\alpha(K') \text{ and } K' \subset U_\alpha(K)\}, 
}
where $U_\alpha(K): = \{x \in E: \delta(x, K) \leq \alpha\}$. Then, if $T$ and $T'$ are two rooted compact metric spaces, with roots $\rho$ and $\rho'$, the Gromov--Hausdorff distance is defined as 
\bal{
d_{GH}(T, T') = \inf \Big\{\delta_{\textrm{Haus}}\big(\phi(T), \phi'(T')\big) \vee \delta\big(\phi(\rho), \phi'(\rho')\big)\Big\},
}
where the infimum is over all choices of a metric space $(E, \delta)$ and all isometric embeddings $\phi: T \to E$ and $\phi': T' \to E$. Two rooted compact metric spaces $T_1$, $T_2$ are equivalent if there is a root-preserving isometry between these two spaces and obviously $d_{GH}(T, T')$ depends only on the equivalence classes of $T$ and $T'$. It is a fact that $d_{GH}(T, T')$ defines a metric on the set of equivalence classes of rooted compact metric spaces. (For the unrooted case, see Theorem 7.3.30 in Burago et al. \cite{burago_burago_ivanov2001}. The proof for the rooted case can be obtained after some adjustments.) Moreover, the set of isometry classes of real trees equipped with the Gromov--Hausdorff metric a complete and separable metric space. See Theorem 1 in a paper of Evans et al.  \cite{evans_pitman_winter2006}.

Recall that we say that $(k, n) \in PP(w)$ for an offspring distribution $w = (w_0, w_1, \ldots)$ if the set of trees with $n$ vertices and $k$ leaves has a positive probability under the probability law  $\PP$ induced by this offspring distribution. We have the following convergence result. 

\begin{theo}
\label{theoTreeConvergence}
 Let the offspring distribution $w = (w_j)_{j = 0, 1, \ldots} $ satisfy Condition (\ref{basicCondition}), and assume that $\big((k_i, n_i)\big)_{i = 1}^\infty$ is a sequence in $PP(w)$ such that $n_i \to \infty$  and $k_i = \alpha n_i + O(1)$ where  $\alpha  \in (0, 1 - 1/\wh \nu)$. Let $T_{n_i}$  be a tree chosen in $\Omega_{k_i, n_i}$ according to the probability law $\PP_{k_i, n_i}$. 
Then, 
\bal{
\frac{1}{\sqrt{n_i}} T_{n_i} \to \frac{2}{\sigma^\ast} \mathcal{T}, 
}
where $\mathcal{T}$ is the Aldous continuum random tree and the convergence is in distribution with respect to the Gromov--Hausdorff topology on the space of compact metric spaces. 
\end{theo}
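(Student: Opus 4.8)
The plan is to reduce the double conditioning to the classical conditioning on the total size alone, by means of an exponential change of measure, and then to show that the residual conditioning on the number of leaves is asymptotically inert once $\alpha$ has been arranged to be the \emph{typical} leaf density.

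First I would establish the key invariance. Let $T\in\Omega_{k,n}$ have $n_j$ vertices with exactly $j$ children, so that $\sum_{j\ge0}n_j=n$, $\sum_{j\ge0}jn_j=n-1$, and $n_0=k$; consequently $\sum_{j>0}n_j=n-k$ and $\sum_{j>0}(j-1)n_j=k-1$. Writing $\PP(T)=\prod_{j\ge0}w_j^{n_j}$ and letting $\PP^\ast$ be the corresponding weight for the $\alpha$-shifted law $L^\ast$ of Definition \ref{defiAlphaShift}, these identities give
\[
\frac{\PP^\ast(T)}{\PP(T)}=\frac{\alpha^{k}\,C^{\,n-k}\,(t^\ast)^{k-1}}{w_0^{\,k}},
\]
which depends only on $(k,n)$. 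Hence $\PP_{k,n}=\PP^\ast_{k,n}$ as laws on $\Omega_{k,n}$, and it suffices to prove the theorem for $L^\ast$. By construction $L^\ast$ has unit mean, and the hypothesis $\alpha<1-1/\wh\nu$ guarantees, via Lemma \ref{lemmaExistence}, a solution $t^\ast\in[1,\rho)$ of \eqref{equForAlphaShift}; combined with the exponential tail of $w$ this makes $L^\ast$ a critical offspring law with finite exponential moments and variance $(\sigma^\ast)^2$ given by \eqref{variance}, whose expected leaf density $w_0^\ast$ equals exactly $\alpha$.

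Second, I would invoke the classical invariance principle for critical Galton--Watson trees of finite variance (Aldous \cite{aldous93}; Duquesne--Le Gall): the $L^\ast$-tree conditioned on $\Omega_n$ alone satisfies $\frac1{\sqrt n}T_n\to\frac2{\sigma^\ast}\mathcal T$ for the Gromov--Hausdorff topology, the constant $2/\sigma^\ast$ being dictated by the variance. Since a plane tree is coded by its contour function and $d_{GH}$ between two trees is controlled by the uniform distance between their contour functions \cite{legall2005}, it suffices to prove that the rescaled contour function of a tree sampled from $\PP^\ast_{k,n}$ converges in distribution to the normalized Brownian excursion $\mathbf e$; the standard machinery then passes between the \L ukasiewicz path, the height function, and the contour function.

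The crux, and the main obstacle, is to show that additionally pinning the leaf count to $k=\alpha n+O(1)$ does not change the limit. Code the tree by its \L ukasiewicz walk $W_m=\sum_{i\le m}\xi_i$ with i.i.d.\ increments $\xi_i\sim L^\ast-1$, and set $N_m=\#\{i\le m:\xi_i=-1\}$; sampling from $\PP^\ast_{k,n}$ is exactly conditioning the bivariate walk $(W_m,N_m)$ on the first-passage event $\{W_m\ge0\ (m<n),\,W_n=-1\}$ and on $\{N_n=k\}$. The increment $(\xi,\1\{\xi=-1\})$ has mean $(0,\alpha)$ and covariance
\[
\begin{pmatrix}(\sigma^\ast)^2 & -\alpha\\ -\alpha & \alpha(1-\alpha)\end{pmatrix}.
\]
In the Brownian limit the excursion conditioning already pins the first coordinate at $0$ at time $1$; decomposing the second Gaussian coordinate into an affine function of the first plus an independent part shows that the constraint $N_n=\alpha n+O(1)$ translates into a constraint on the independent part only, and therefore leaves the law of the excursion untouched. (If the covariance degenerates — equivalently $\xi$ takes only the value $-1$ and one other value — then $k$ is a deterministic function of $n$, the conditioning on $k$ is vacuous, and there is nothing to prove.) Making this decoupling rigorous at the discrete level is where the real work lies: it requires a bivariate local limit theorem for $(W_n,N_n)$, uniform on the appropriate sublattice, to evaluate the joint weight $\PP^\ast(W_n=-1,\,N_n=k)$ together with the first-passage probabilities, combined with an invariance principle for a walk conditioned to stay nonnegative with a pinned endpoint, and a check that all estimates are uniform over $k-\alpha n=O(1)$. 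Granting this, the second step delivers $\frac1{\sqrt{n_i}}T_{n_i}\to\frac2{\sigma^\ast}\mathcal T$ in distribution for the Gromov--Hausdorff topology.
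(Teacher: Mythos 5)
Your first step is sound and matches the paper's starting point: the ratio computation showing $\PP^\ast(T)/\PP(T)$ depends only on $(k,n)$, hence $\PP_{k,n}=\PP^\ast_{k,n}$, is exactly the reduction the paper makes by passing to the $\alpha$-shifted distribution (one small slip: $t^\ast\in[1,\rho)$ is false when $\alpha<w_0$, where $t^\ast<1$; but since $t^\ast<\rho$ the exponential moments of $L^\ast$ survive, so the conclusion stands). The genuine gap is your third step, which you yourself flag with ``Granting this'': the claim that pinning $N_n=k$ is asymptotically inert is precisely the technical core of the theorem, and the Gaussian decoupling heuristic does not constitute a proof. Even if one made the bivariate local limit theorem rigorous, conditioning on $\{W_m\ge 0,\,W_n=-1,\,N_n=k\}$ is a singular conditioning whose effect on the \emph{path} law cannot be read off from a covariance computation of the pair $(\xi,\1\{\xi=-1\})$: at best that argument controls finite-dimensional distributions, and it gives no tightness for the conditioned walk, without which there is no convergence in $C[0,1]$ and hence no Gromov--Hausdorff limit. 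An ``invariance principle for a walk conditioned to stay nonnegative with a pinned endpoint and a pinned number of $-1$ steps, uniform over $k-\alpha n=O(1)$'' is not available off the shelf; it is the statement to be proved, not a tool to be invoked.

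The paper closes this gap by a different mechanism that your outline misses entirely. Instead of conditioning the bivariate walk, it builds the increment sequence directly: take $n-k$ i.i.d.\ variables with law $\wh w^\ast_j=w^\ast_j/(1-\alpha)$ conditioned only on their sum being $n-1$ (a univariate conditioning, handled by the local limit theorem, including a truncated version proved in the appendix), insert $k$ zeros, and apply a uniform random permutation. The resulting sequence is exchangeable, so Billingsley's Theorem 24.2 applies and yields convergence of the partial-sum process to the Brownian bridge --- tightness included, which is what your decoupling argument cannot deliver. The discrete Vervaat transform (cycle lemma) then converts the bridge into the excursion and simultaneously identifies the path as the \L ukasiewicz path of a $\PP_{k,n}$-tree. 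Finally, passing from the \L ukasiewicz path to the height and contour processes is done by adapting Marckert--Mokkadem, where the doubly conditioned estimates are obtained from unconditional ones via the elementary bound $\P(\mathfrak A\,|\,\mathfrak B)\le\P(\mathfrak A)/\P(\mathfrak B)$ together with a polynomial lower bound $\P(\mathfrak B)\sim ab\,n^{-5/2}$ --- a cheap, robust substitute for the uniform bivariate analysis you propose. So: your reduction via tilting is correct and shared with the paper, but the proposal leaves the theorem's actual content unproved.
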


\subsubsection{Convergence of Contour Processes}
Theorem \ref{theoTreeConvergence} follows from a stronger result that asserts the convergence of the rescaled contour process associated with the tree $T_n \in \Omega_{k, n}$ to a Brownian excursion. For details of this implication, see the proof of Theorem 2.5 in Le Gall's survey \cite{legall2005}. The key fact is Lemma 2.4 in \cite{legall2005}, which states that if $T$ and $T'$ are two real trees with contour functions $C(x)$ and $C'(x)$ then $d_{GH}(T, T') \leq 2 \|g - g'\|$, where $\|g - g'\|$ stands for the uniform norm of $g - g'$.

Before stating the stronger result, let us define the contour and related processes for an ordered rooted tree. While we need only the contour process to formulate the result, the other processes will be useful in the proof. 

Let $T$ be an ordered tree with $n$ vertices. 

\begin{figure}[htbp]
\centering
              \includegraphics[width=0.9\textwidth]{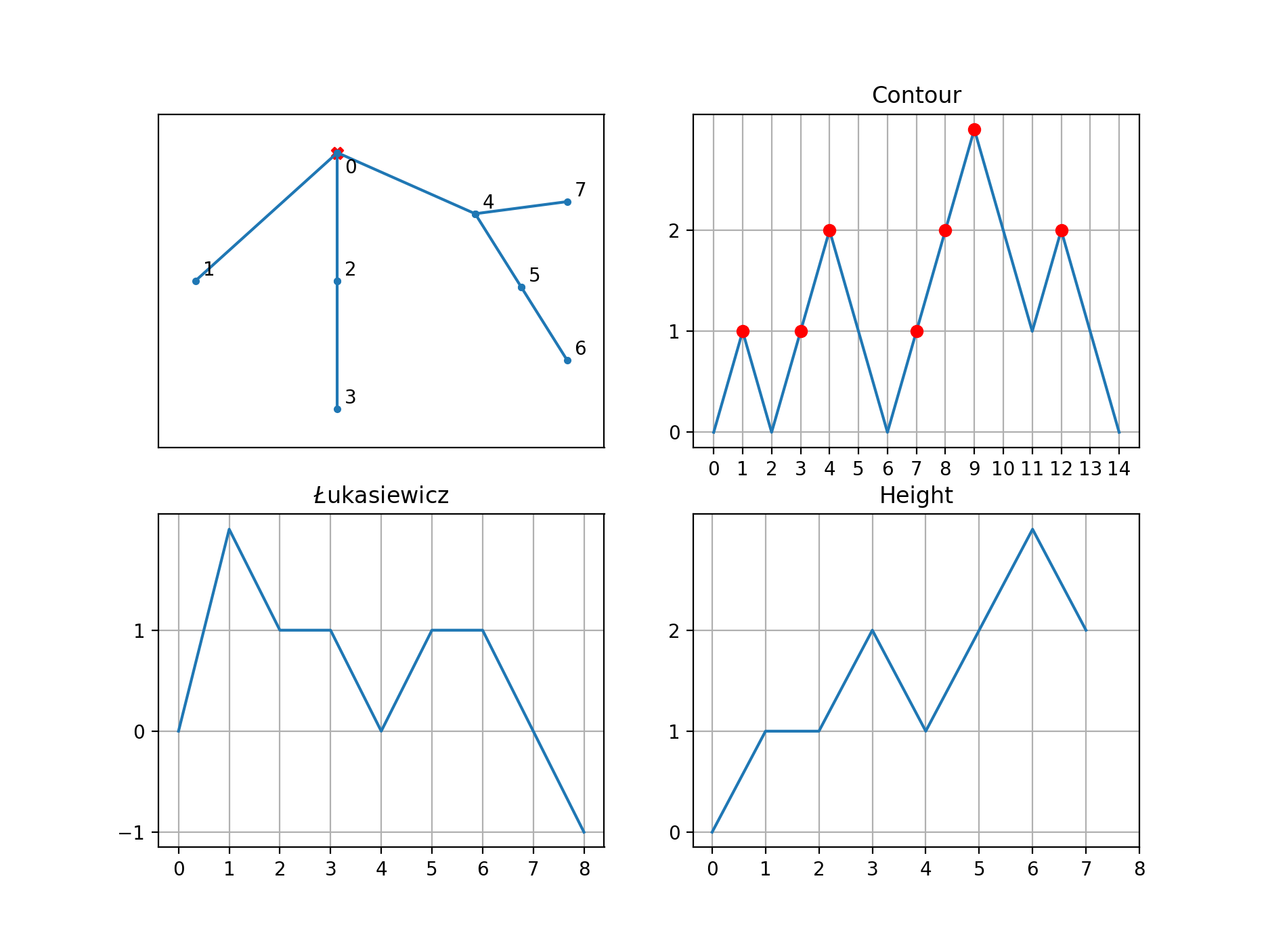}
              \caption{The contour, \L ukasiewicz, and height processes for a tree on 8 vertices. The red points $\big(m(l), H_n(l)\big)$ are an embedding of the height process in the contour process (see Definition  \ref{defi_ml}).}
              \label{figTree}
\end{figure}

\emph{The depth-first search (DFS)}: The sequence $f_i$, with  $i = 0, \ldots, 2n - 2,$ is a sequence of vertices, defined as follows: (i) $f_0 = root$; (ii) given $f_i = v,$ choose, if possible, the left-most child $w$ of $v$ that has not already been visited and set $f_{i +1} = w$. If this is not possible, let $f_{i + 1}$ be the parent of $v$, (iii) the sequence ends when $f_{2n - 2} = root$ and all vertices has been visited at least once.      

\emph{The contour process} $C_n(i)$, $i = 0, \ldots, 2n - 2,$ is defined as follows:
\begin{equation}
\label{contour_process}
C_n(i) = d(root, f_i) = \text{height of } f_i, \text{ the i-th vertex in the DFS}.
\end{equation}
Between the integer points, the path $C_n(t)$ is defined by linear interpolation. See Fig. \ref{figTree} for an illustration. 

We also define 
\emph{the \L ukasiewicz path process} $S_n(i)$, for $i = 0, \ldots, n$. (In the paper of Marckert and Mokkadem \cite{marckert_mokkadem2003}, this process is called the Depth First Queue Process (DFQP); we follow the terminology in the book of Flajolet and Sedgewick \cite{flajolet_sedgewick2009}.) Assume that the vertices in a planar tree are denoted $v_j$, $j = 0, \ldots, n - 1$, in the order of their appearance in the Depth First Search (and without repetition, unlike $f_i$). Let $\xi_i$, $i = 1, \ldots, n$, be the number of children of vertex $v_{i - 1}$.  Then, $S_n(0) = 0$, and
\begin{equation}
 \label{Lukasiewicz_process}
S_n(j) = \sum_{i = 1}^{j}(\xi_i - 1), \text{ for } 1\leq j \leq n.
\end{equation}
 Similar to the contour process, we extend the \L ukasiewicz process to the intervals between integers by linearity. For illustration, see Fig. \ref{figTree}. 
 
 A \L ukasiewicz path, by definition, is a piecewise continuous map from $[0, n] \to \R$ that has the following properties: its increments $S_n(j+1) - S_n(j) \in \{-1, 0, 1, \ldots\}$ for all integer $0 \leq j \leq n - 1$; it is non-negative, $S_n(j) \geq 0$, for every $0 \leq j \leq n - 1$; $S_n(0) = 0$, and  $S_n(n) = - 1$. The construction  (\ref{Lukasiewicz_process}) maps an ordered rooted tree with $n$ vertices to a \L ukasiewicz path. In fact, every \L ukasiewicz path can be obtained in this way, and therefore this construction gives a bijective map from the set of ordered trees $\Omega_n$ to the set of \L ukasiewicz paths on interval $[0, n]$.

Finally, we define the \emph{height process} $H_n(i)$, $i = 0, \ldots, n - 1$,  as
\begin{equation}
\label{height_process}
H_n(i) = d(root, v_i) = \text{height of } v_i.
\end{equation}

Now let us formulate the key result. In the theorem below, the convergence of stochastic processes is the weak convergence with respect to the space of continuous functions on $[0, 1]$ with the topology induced by the uniform norm: $\|f\|_{\infty} = \sup_{t \in [0,1]} f(t)$.

\begin{theo}
\label{theoConvergenceToExcursion}
  Let the offspring distribution $w = (w_j)_{j = 0, 1, \ldots} $ satisfy Condition (\ref{basicCondition}), and assume that $\big((k_i, n_i)\big)_{i = 1}^\infty$ is a sequence in $PP(w)$ such that $n_i \to \infty$  and $k_i = \alpha n_i + O(1)$ where  $\alpha  \in (0, 1 - 1/\wh \nu)$. Let $T_{n_i}$  be a tree chosen in $\Omega_{k_i, n_i}$ according to the probability law $\PP_{k_i, n_i}$ and let $C_{n_i}$ be the contour process for this tree. Then, as $n_i \to \infty$,  
\bal{
\bigg( \frac{C_{n_i}(2n_i t)}{\sqrt{n_i}}\bigg)_{t \in (0, 1)} \xrightarrow{\text{weakly}} \frac{2}{\sigma^\ast} e(t),
}
where $e(t)$ is a standard Brownian excursion, and $\sigma^\ast$ is as defined in (\ref{variance}). 
\end{theo}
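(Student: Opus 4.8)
The plan is to prove the invariance principle at the level of the \L ukasiewicz path, which is the only one of the three encodings directly amenable to random-walk methods, and then to push the convergence through to the height and contour processes. The decisive first step is a change of measure: I would record that $\PP_{k,n}$ is unchanged if the offspring weights $w$ are replaced by the $\alpha$-shifted weights $w^\ast$ of Definition~\ref{defiAlphaShift}. For a fixed tree $T\in\Omega_{k,n}$ with $n_j$ vertices of out-degree $j$, the relations $\sum_{j\ge1}n_j=n-k$ and $\sum_{j\ge1}(j-1)n_j=k-1$ give $\PP^\ast(T)/\PP(T)=\alpha^kC^{\,n-k}(t^\ast)^{k-1}w_0^{-k}$, a ratio depending only on $(k,n)$; hence $\PP^\ast_{k,n}=\PP_{k,n}$. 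The point of this reduction is that under $w^\ast$ the variable $L^\ast$ has unit mean, variance $(\sigma^\ast)^2$ (Lemma~\ref{lemmaVarianceAlpha}) and $\P(L^\ast=0)=\alpha$, so an unconditioned $w^\ast$-tree on $n$ vertices has $\alpha n$ leaves in expectation. The hypothesis $k=\alpha n+O(1)$ is therefore a \emph{central} conditioning, and the large-deviations difficulty of the original picture (present exactly when $\alpha\ne w_0$) disappears.

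Next I would establish the \L ukasiewicz convergence. Encoding the tree as the walk $S_j=\sum_{i\le j}X_i$ with i.i.d.\ increments $X_i=\xi_i^\ast-1\in\{-1,0,1,\dots\}$, the number of leaves is $N_n:=\#\{i\le n:X_i=-1\}$, so $\PP^\ast_{k,n}$ is the law of $(X_i)$ conditioned on $\{S_n=-1,\,N_n=k\}$ together with $S_j\ge0$ for $j<n$. I would first drop positivity and treat the \emph{bridge}, conditioned only on $(S_n,N_n)=(-1,k)$: the exponential moment in (\ref{basicCondition}) provides a Cram\'er-type bivariate local limit theorem for the lattice walk $(S_j,N_j)$, giving $n^{-1/2}S_{\lfloor nt\rfloor}\to\sigma^\ast b(t)$, a Brownian bridge. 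Here one must verify that pinning the leaf-coordinate does not distort the variance of $S$, even though $\Cov(X_1,\1[X_1=-1])=-\alpha\ne0$: writing the correlated limit as $B^Y_t=(\sigma_{XY}/\sigma_X^2)B^X_t+W_t$ with $W$ independent of $B^X$, conditioning on $B^Y_1=0$ becomes, after removing the $X$-predictable part, a constraint on $W_1$ alone and hence independent of the whole $B^X$-path; the conditional limit of $S$ is thus a bridge of the \emph{plain} variance $(\sigma^\ast)^2$, with no reduction. (When $L^\ast$ has only two atoms the pair $(X_1,N_1)$ is collinear, $N_n$ is then determined by $S_n$, and the argument collapses to the classical single-conditioning case.) I would then reinstate positivity by the cycle lemma: exactly one of the $n$ cyclic shifts of a bridge path is a \L ukasiewicz path, and cyclic shifts preserve the increment multiset and hence both $S_n$ and $N_n$; the conditioned excursion is therefore the Vervaat transform of the conditioned bridge, and continuity of the Vervaat map at $\sigma^\ast b$ (whose minimum is a.s.\ attained at a unique point) yields $n^{-1/2}S_{\lfloor nt\rfloor}\to\sigma^\ast e(t)$.

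Finally I would transfer to the contour process. Using the pathwise identity $H_n(i)=\#\{0\le j<i:S_n(j)=\min_{j\le\ell\le i}S_n(\ell)\}$ together with the increment estimates of Marckert and Mokkadem \cite{marckert_mokkadem2003} (equivalently the height-process invariance principle in Le Gall \cite{legall2005}), one shows $\sup_t\big|n^{-1/2}H_n(\lfloor nt\rfloor)-\tfrac{2}{(\sigma^\ast)^2}n^{-1/2}S_n(\lfloor nt\rfloor)\big|\pcv0$, whence $n^{-1/2}H_n(\lfloor nt\rfloor)\to\tfrac{2}{\sigma^\ast}e(t)$; comparing the contour and height processes (each edge is traversed twice, so the two differ by an oscillation that is $o(\sqrt n)$) then gives $C_{n}(2nt)/\sqrt n\to\tfrac{2}{\sigma^\ast}e(t)$. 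The two genuinely new difficulties, and where I expect the real work to lie, are (a) the variance check in the bridge step, which is what forces the plain variance $(\sigma^\ast)^2$ rather than a conditional one, and (b) upgrading the estimate $H_n\approx\tfrac{2}{(\sigma^\ast)^2}S_n$ from the single-conditioning setting to the doubly-conditioned measure, where the increment law along the walk must be shown to be asymptotically the $\alpha$-shifted one; the exponential moment hypothesis (\ref{basicCondition}) is used precisely to make the local limit theorem and the attendant uniform tightness estimates go through.
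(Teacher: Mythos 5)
Your proposal is correct and reaches the theorem, but the first (and hardest) step takes a genuinely different route from the paper's. Your opening change of measure --- the ratio $(\alpha/w_0)^k C^{\,n-k}(t^\ast)^{k-1}$ depending only on $(k,n)$, hence $\PP^\ast_{k,n}=\PP_{k,n}$ --- is the same observation the paper makes inside the proof of Theorem \ref{theoAlgorithm} via exponential re-weighting. Where you diverge is the bridge step: you attack the doubly conditioned walk head-on, conditioning the i.i.d.\ $w^\ast$-walk on the joint lattice event $(S_n,N_n)=(-1,k)$ and invoking a bivariate Cram\'er local limit theorem, with the Gaussian projection $B^Y_t=(\sigma_{XY}/\sigma_X^2)B^X_t+W_t$ identifying the plain variance $(\sigma^\ast)^2$; your projection argument is right (including the collinear two-atom degeneracy, where $N_n$ is determined by $S_n$), and it gives a structural explanation of \emph{why} pinning the leaf count does not distort the variance. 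The paper instead factors the double conditioning combinatorially through Algorithm A: the $n-k$ nonzero out-degrees are i.i.d.\ $\wh w^\ast$ conditioned only on the \emph{univariate} event that their sum is $n-1$, the $k$ zeros are inserted at uniformly random positions, and the resulting sequence $\xi''$ is exchangeable, so Billingsley's Theorem 24.2 (fed by the moment, covariance and maximum controls of Lemmas \ref{lemma_xi_moments} and \ref{lemma_xi2_sum}, which rest on a univariate local limit theorem only) delivers the Brownian bridge with variance $(\sigma^\ast)^2$ directly. What each route buys: yours avoids the algorithmic construction but requires heavier machinery to be rigorous --- a genuinely two-dimensional local limit theorem on the sublattice spanned by $\{(-1,1)\}\cup\{(j,0):w^\ast_{j+1}>0\}$ plus a conditioned invariance principle (finite-dimensional distributions and tightness under a probability-$\Theta(1/n)$ event), all of which the exchangeability trick sidesteps; conversely, the paper's route obscures the variance question your projection answers cleanly. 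From Vervaat's transform onward the two arguments coincide. For the transfer to height and contour processes you correctly flag that the Marckert--Mokkadem estimates must be moved to the conditioned measure, but you leave the mechanism unnamed; the paper's concrete device (Lemma \ref{lemmaSandH}) is the inequality $\P(\mathfrak A_n\mid\mathfrak B_n)\le\P(\mathfrak A_n)/\P(\mathfrak B_n)$ combined with the polynomial lower bound $\P(\mathfrak B_n)\gtrsim n^{-5/2}$ (two local CLTs plus the cycle lemma), which beats the superpolynomially small unconditional deviation probabilities --- this is the precise form of your ``rare events stay rare'' step, and you would need it verbatim.
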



This theorem implies convergence results for continuous functionals on trees, in particular, the following result about the height of trees. 
\begin{coro}
Let $h_{n_i}$ be the height of a random tree $T_{n_i} \in \Omega_{k_i, n_i}$, and suppose that the assumptions of Theorem \ref{theoConvergenceToExcursion} are satisfied. Then, 
 the distribution of $\sigma^\ast h_{n_i} / (2\sqrt{n_i})$ converges to the distribution of the maximum of a standard Brownian excursion. 
\end{coro}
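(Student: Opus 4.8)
The plan is to obtain this corollary as a direct consequence of Theorem \ref{theoConvergenceToExcursion} via the continuous mapping theorem. The starting observation is that the height of the tree coincides with the supremum of its contour process. Indeed, as the depth-first search visits every vertex of $T_{n_i}$, the value $C_{n_i}(i) = d(root, f_i)$ ranges over the heights of all vertices, so that
\bal{
h_{n_i} = \max_{0 \leq i \leq 2n_i - 2} C_{n_i}(i) = \sup_{t \in [0,1]} C_{n_i}(2 n_i t),
}
where the last equality uses that the linear interpolation between consecutive integer values never exceeds the larger endpoint, so the supremum of the interpolated path is attained at an integer point.

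Next I would invoke the fact that the supremum functional $\Phi \colon C[0,1] \to \R$ defined by $\Phi(f) = \sup_{t \in [0,1]} f(t)$ is continuous with respect to the uniform norm; in fact it is $1$-Lipschitz, since $|\Phi(f) - \Phi(g)| \leq \|f - g\|_\infty$. Theorem \ref{theoConvergenceToExcursion} gives the weak convergence
\bal{
\bigg( \frac{C_{n_i}(2 n_i t)}{\sqrt{n_i}} \bigg)_{t \in (0,1)} \xrightarrow{\text{weakly}} \frac{2}{\sigma^\ast} e(t)
}
in $C[0,1]$ equipped with the uniform norm. Applying the continuous mapping theorem with $\Phi$, and using the identity above together with the positive homogeneity $\Phi(c f) = c\, \Phi(f)$ for $c > 0$, yields
\bal{
\frac{h_{n_i}}{\sqrt{n_i}} = \Phi\bigg( \frac{C_{n_i}(2 n_i \,\cdot\,)}{\sqrt{n_i}} \bigg) \xrightarrow{d} \frac{2}{\sigma^\ast}\, \sup_{t \in [0,1]} e(t).
}

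Rearranging the scaling constant then gives that $\sigma^\ast h_{n_i} / (2\sqrt{n_i})$ converges in distribution to $\sup_{t \in [0,1]} e(t)$, the maximum of a standard Brownian excursion, which is the asserted conclusion. I do not anticipate a genuine obstacle here: the only point requiring a word of care is the identification of $h_{n_i}$ with the supremum of the (interpolated) contour process, and the verification that the endpoint behavior of $C_{n_i}$ near $t = 0$ and $t = 1$ does not interfere with taking the supremum—both of which are routine since the maximum is an interior feature of the excursion and is already handled by the convergence in Theorem \ref{theoConvergenceToExcursion}.
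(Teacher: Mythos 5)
Your proof is correct and follows exactly the paper's own argument: identify the height $h_{n_i}$ with the maximum of the contour process, note that the supremum functional is continuous on $C[0,1]$ with the uniform norm, and apply the continuous mapping theorem to the convergence in Theorem \ref{theoConvergenceToExcursion}. Your additional details (the $1$-Lipschitz bound for $\Phi$ and the remark on the linear interpolation) only make explicit what the paper leaves implicit.
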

\begin{proof}
The height of tree $T_{n_i}$ corresponds to the maximum of the tree's contour process. The maximum is a continuous functional on  functions in $C[0, 1]$ with the uniform metric. Therefore, by Theorem \ref{theoConvergenceToExcursion}, the distribution of the maximum for the re-scaled contour process $C_n$ converges to the distribution of the maximum of a standard Brownian excursion. 

\end{proof}


 
 \subsubsection{Convergence of the Degree Sequence}
Now, let $n_j(T)$ denote the number of vertices of out-degree  $j$ in tree $T$. (Recall that the out-degree of a vertex in a rooted tree is the number of edges that lead away from the root. For Galton--Watson trees it is the number of children of an individual represented by the vertex.) We say that the sequence $(n_0(T), n_1(T), \ldots)$ is the 
\emph{degree sequence} of tree $T$. A probability measure on trees $\Omega_{k, n}$ induces a probability measure on degree sequences. By definition, $n_0(T) = k$ for a tree $T \in \Omega_{k, n}$. In addition, $\sum_{i = 0}^\infty n_i(T) = n$, where $n$ is the number of vertices in $T$, and therefore,  $\{\frac{n_i(T)}{n }\}_{i = 0}^\infty$ is a random probability measure.

\begin{theo}
\label{theoMostLikelyType}
 Let the offspring distribution $w = (w_j)_{j = 0, 1, \ldots} $ satisfy Condition (\ref{basicCondition}), and assume that $(T_n \in \Omega_{k, n})$ is a sequence of random trees with $k \to \infty$, $n \to \infty$,  and $(k,n)\in PP(w)$. Suppose that $k = \alpha n + O(1)$ as $n \to \infty$, and $\alpha \in (0, 1 - 1/\wh \nu)$. Then, almost surely, $\frac{n_j(T_n)}{n}\to w_j^\ast$  for $j = 0, 1, 2, \ldots$, where $\{ w_j^\ast\}$ is the $\alpha$-shifted offspring distribution.
\end{theo}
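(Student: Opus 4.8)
The plan is to represent the degree sequence under $\PP_{k,n}$ as the vector of type counts of a \emph{conditioned} i.i.d.\ sequence, and then to recognize the $\alpha$-shifted law $w^\ast$ as the exponential tilt that renders the conditioning event typical, so that concentration of empirical frequencies takes over. First I would set up the i.i.d.\ representation. Writing $\xi_1, \ldots, \xi_n$ for the out-degrees of the vertices $v_0, \ldots, v_{n-1}$ listed in DFS order, these are, under the unconditioned GW law, i.i.d.\ copies of $L$, and $N_j := \#\{i : \xi_i = j\}$ equals $n_j(T_n)$; in particular $N_0$ is the number of leaves. The number of ordered trees with a prescribed degree sequence $\mathbf n = (n_0, n_1, \ldots)$ satisfying $\sum_j n_j = n$ and $\sum_j j n_j = n-1$ is the classical quantity $\frac{(n-1)!}{\prod_j n_j!} = \tfrac1n\binom{n}{n_0, n_1, \ldots}$, whereas the number of sequences $(\xi_i)$ with those type counts is $\binom{n}{n_0, n_1, \ldots}$. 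Since the two differ only by the global factor $1/n$ and both carry the same weight $\prod_j w_j^{n_j}$, conditioning the GW measure on $\Omega_{k,n}$ yields exactly the law of $(N_j)$ for i.i.d.\ $\xi_i \sim w$ conditioned on
\[
E_{k,n} := \Big\{\textstyle\sum_{i=1}^n \xi_i = n-1,\ N_0 = k\Big\}.
\]

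Second, I would perform the change of measure to $w^\ast$. For any type vector $\mathbf n$ compatible with $E_{k,n}$, the multiplicative form $w_0^\ast = \alpha$ and $w_j^\ast = C w_j (t^\ast)^{j-1}$ ($j\ge1$) makes the likelihood ratio $\prod_j (w_j^\ast)^{n_j} / \prod_j w_j^{n_j}$ a function of $n_0$ and of $\sum_i \xi_i$ alone; both are pinned on $E_{k,n}$, so this ratio is \emph{constant} across all configurations in $E_{k,n}$. Hence the conditional laws coincide,
\[
\P_w\big(\,\cdot \mid E_{k,n}\big) = \P_{w^\ast}\big(\,\cdot \mid E_{k,n}\big),
\]
and the point of passing to $w^\ast$ is that its constraints now sit at their means: since $\E_{w^\ast}\xi = 1$ we have $\E_{w^\ast}\sum_i \xi_i = n = (n-1)+O(1)$, and since $\P_{w^\ast}(\xi=0) = \alpha$ we have $\E_{w^\ast} N_0 = \alpha n = k + O(1)$. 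Thus $E_{k,n}$ is the central (typical) event for the tilted sequence.

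Third, I would combine a large-deviation upper bound with a local-limit lower bound. Fix $j$ and $\eps>0$, and work under $w^\ast$. The event $A_n := \{|N_j/n - w_j^\ast| > \eps\}$ is a deviation of an empirical frequency of i.i.d.\ variables, so Cram\'er's theorem gives $\P_{w^\ast}(A_n) \le e^{-cn}$ with $c = c(j,\eps)>0$. For the denominator I would invoke a two-dimensional local central limit theorem for the i.i.d.\ lattice vector $(\xi,\ \1\{\xi=0\})$ under $w^\ast$, whose finite exponential moments follow from Condition~(\ref{basicCondition}); this yields $\P_{w^\ast}(E_{k,n}) \ge c'/n$ for large $n$ (in fact only a subexponential lower bound is needed). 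Therefore
\[
\PP_{k,n}(A_n) = \P_{w^\ast}\big(A_n \mid E_{k,n}\big) \le \frac{\P_{w^\ast}(A_n)}{\P_{w^\ast}(E_{k,n})} \le \frac{e^{-cn}}{c'/n} \le e^{-cn/2}
\]
for all large $n$. Summability of these probabilities and the Borel--Cantelli lemma (with the trees sampled independently across $n$) give $N_j/n \to w_j^\ast$ almost surely for every fixed $j$, which is the assertion $\frac{n_j(T_n)}{n} \to w_j^\ast$.

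I expect the only genuinely technical step to be the local-limit lower bound $\P_{w^\ast}(E_{k,n}) \ge c'/n$: it requires checking that $(\xi,\ \1\{\xi=0\})$ is a non-degenerate, aperiodic lattice vector under $w^\ast$ — non-degeneracy of its covariance matrix being where $w_0 \ne 0$ and $w_1 \ne 1$ enter — and controlling the $O(1)$ offsets in both constraints, so that the target point $(n-1,k)$ stays within $O(\sqrt n)$ of the mean $(n,\alpha n)$ and hence in the Gaussian bulk. Everything else reduces to the tilting identity and the exponential Chernoff estimate, both routine given the finite exponential moment in~(\ref{basicCondition}); the existence of $w^\ast$ (and the bound $\alpha < 1 - 1/\wh\nu$) is supplied by the earlier lemmas.
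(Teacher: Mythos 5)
Your proposal is correct, but it takes a genuinely different route from the paper. The shared starting point is the reduction of the degree sequence to type counts of a conditioned i.i.d.\ sequence (your multinomial/cycle-lemma identity is Lemma \ref{lemmaEquivalenceTreesAndBBM}), and the observation that tilting from $w$ to $w^\ast$ leaves the conditional law on $E_{k,n}$ unchanged (implicit in the paper's proof of Theorem \ref{theoAlgorithm}). After that the mechanisms diverge. The paper first integrates out the zeros: by exchangeability, the positions of the zero entries are uniform, and conditionally the positive entries are i.i.d.\ with law $\wh w^\ast$ conditioned on a \emph{univariate} sum constraint, so only a one-dimensional local limit theorem is ever needed. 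It then controls the conditioning not by the crude ratio $\P(A\mid E)\le \P(A)/\P(E)$ but by the Broutin--Marckert half-sample trick (Lemma \ref{lemma_uncond2cond}): for events measurable with respect to the first half of the variables, the conditional probability is at most a constant times the unconditional one, since the $1/\sqrt n$ from the LLT denominator is cancelled by Petrov's uniform $O(1/\sqrt n)$ bound on the second-half sum; this lets a mere fourth-moment deviation bound $O(m^{-2})$ survive the conditioning, with exchangeability covering the second half and Borel--Cantelli finishing. You instead pay the full $1/\P_{w^\ast}(E_{k,n})=O(n)$ factor and beat it with a Chernoff/Hoeffding bound $e^{-cn}$ for the Bernoulli counts $N_j$, obtaining exponential concentration as a byproduct, which is stronger than what the paper extracts; the price is the bivariate local limit lower bound $\P_{w^\ast}(E_{k,n})\ge c'/n$, which the paper never needs. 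Two caveats on that step. First, your diagnosis of non-degeneracy is off: $w_0\ne 0$ and $w_1\ne 1$ do not suffice; the covariance of $(\xi,\1\{\xi=0\})$ is singular exactly when $w$ has a single positive support point $j_0$, since then $\xi=j_0(1-\1\{\xi=0\})$ a.s. In that case, however, $\wh\psi\equiv j_0$, so $\wh\nu=j_0$, and any $(k,n)\in PP(w)$ forces $j_0(n-k)=n-1$, i.e.\ $\alpha=1-1/\wh\nu$, which the hypothesis $\alpha<1-1/\wh\nu$ excludes --- the theorem is vacuous there, so this case can simply be ruled out, but that argument should be made. Second, ``aperiodic'' is too strong: e.g.\ for support $\{0,1,3\}$ the vector $(\xi,\1\{\xi=0\})$ lives on a proper sublattice of $\Z^2$, so you need the lattice form of the bivariate LLT, with $(n-1,k)$ lying on the correct coset --- which is precisely what $(k,n)\in PP(w)$ guarantees --- and within $O(\sqrt n)$ (in fact $O(1)$) of the mean $(n,\alpha n)$, as you note.
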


\subsubsection{Examples}
Here are some explicitly calculated $\alpha$-shifted distributions together with the parameter $(\sigma^\ast)^2$. 
\begin{exa}[GW trees with geometric distribution]
\end{exa}
Consider GW trees with the geometric offspring distribution $w_j = 2^{-(j +1)}$, $ j = 0, 1, \ldots $ conditioned to have $n$ vertices. This distribution is an exponential rescaling of the weight sequence $w_j = 1$, where $ j = 0, 1, \ldots $,  and therefore these trees have the uniform distribution on $\Omega_n$. (This is Example 10.1 in Janson \cite{janson2012}.)
 For this distribution, we calculate $\theta(t) = (2 - t)^{-1}$ and $\wh\nu = +\infty$. Therefore, equations in (\ref{equForAlphaShift}) are solvable for every $\alpha \in (0, 1)$. The solution is $t^\ast = 2\alpha$ and $C = 4(1 - \alpha)^2$. This implies $w_0^\ast = \alpha$ and $w_j^\ast = \alpha^{j - 1} (1 - \alpha)^2$ for $j = 1, 2, \ldots$.  The variance is $(\sigma^\ast)^2 = 2\alpha/(1 - \alpha)$. Note that as $\alpha$ (i.e., the limiting ratio of the number of leaves to the number of vertices) increases, the variance parameter of the $\alpha$-shifted offspring distribution, $(\sigma^\ast)^2$, also increases. In particular, we should expect that the height of the random tree in $\Omega_{k,n}$ decreases. (A tree with many leaves is ``fat'' and ``short''.)

\begin{exa}
[Unary-Binary GW trees]
\end{exa}
Consider unary-binary GW trees with weight sequence $w_0 = p$, $w_1 = 1 - 2p$, $w_2 = p$, and $w_i = 0$ for all other $i$, where $0 < p < 1/2$.  In this case, $\wh \nu = 2$ and equations  (\ref{equForAlphaShift}) are solvable for every $\alpha \in (0, \frac{1}{2})$. (It is not possible to solve these equations for $\alpha > 1/2$ because  the number of leaves and the number of vertices are related by the inequality $n \geq 2 k - 1$ in the unary-binary trees, with the equality reached for binary trees.)  The setup of this example includes Examples 10.4 and 10.5 in Janson \cite{janson2012}.  After $\alpha$-shifting we get $w_0^\ast = w_2^\ast = \alpha$, $w_1^\ast = 1 - 2\alpha$, with variance $(\sigma^\ast)^2 = 2\alpha$. We can see that, as $\alpha$ increases, the variance also increases, as in the previous example. However, in this case $\alpha$ is bounded from above, therefore variance of the offspring distribution is bounded from above, and the tree cannot be made arbitrarily short.

\subsubsection{Algorithm for Generating Trees in $\Omega_{k, n}$}
The proof of Theorems \ref{theoConvergenceToExcursion} and \ref{theoMostLikelyType} is based on a new algorithm that generates trees in $\Omega_{k, n}$ with weight sequence $w_1, w_2, \ldots$. In order to explain the algorithm, we introduce a useful map between ordered trees and balls-in-boxes allocations. The general idea of the algorithm is to generate a suitable random allocation and then recover a random tree from this allocation. 

First, we define a map $\sigma$ that sends an ordered tree on $n$ vertices to the sequence of the vertex offspring sizes. In this sequence the vertices are ordered by the DFS, but the repeats of the vertices are omitted:
\begin{equation}
\label{defiSigma}
\sigma: T \to \sigma(T) = (\xi_1, \ldots, \xi_n).
\end{equation} 
For example in Fig. \ref{figTree}, $\sigma(T) = (3, 0, 1, 0, 2, 1, 0, 0)$. 

This sequence can be identified with an allocation of $n - 1$ balls (corresponding to edges) in $n$ boxes (corresponding to vertices). The set of all possible allocations is denoted $B_{n - 1, n}$.
The map $\sigma: \Omega_n \to B_{n-1, n}$ is injective but not surjective. 


By using the cycle lemma\footnote{The cycle lemma appears first in a paper of Dvoretzky and Motzkin \cite{dvoretsky_motzkin1947} and re-appears in different forms in many other papers. See Dershowitz and  Zacks \cite{dershowitz_zaks1990} and Note I.47 on p.75-76 of the book by Flajolet and Sedgewick \cite{flajolet_sedgewick2009} for an overview of the literature. The lemma is closely related to Vervaat's transform for the Brownian Bridge (see \cite{vervaat79}) and is often called the discrete Vervaat transform (see Pitman \cite{pitman2002}, Exercise 1 in Sect. 5.1). 
}, one can establish the following result.
\begin{lemma}[Corollary 15.4 in \cite{janson2012}]
\label{lemmaCyclicShift}
If $(y_1, \ldots, y_n) \in B_{n - 1, n}$, then exactly one of the $n$ cyclic shifts of $(y_1, \ldots, y_n)$ is the out-degree sequence $\sigma(T)$ of a tree $T \in \Omega_n$. 
\end{lemma}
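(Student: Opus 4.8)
The plan is to reduce the statement to the partial-sum characterization of out-degree sequences and then to prove the required counting fact — which is exactly the $k=1$ case of the cycle lemma — in a self-contained way. First I would recall the criterion established above together with the bijection between $\Omega_n$ and \L ukasiewicz paths: a sequence $(\xi_1,\ldots,\xi_n)\in B_{n-1,n}$ equals $\sigma(T)$ for some ordered tree $T\in\Omega_n$ if and only if its partial sums $S_j:=\sum_{i=1}^{j}(\xi_i-1)$ satisfy $S_j\ge 0$ for $0\le j\le n-1$ and $S_n=-1$, i.e. iff $(\xi_i)$ is a \L ukasiewicz path. With this, the lemma becomes a purely combinatorial statement about cyclic shifts of a fixed integer sequence.

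Next I would set $x_i:=y_i-1$, so that each $x_i\ge -1$ and $\sum_{i=1}^n x_i=(n-1)-n=-1$, and extend the partial sums periodically by the drift relation $S_{j+n}=S_j-1$. The cyclic shift of $(y_i)$ starting at position $p$ is a valid tree sequence precisely when $S_{p+j}-S_p\ge 0$ for $j=1,\ldots,n-1$ and $S_{p+n}-S_p=-1$. The endpoint condition is automatic from the drift relation, so the whole requirement reduces to the single condition that $S_p$ be a minimum of the length-$n$ window $S_p,S_{p+1},\ldots,S_{p+n-1}$. Thus I must show that exactly one $p\in\{0,1,\ldots,n-1\}$ has this window-minimum property.

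For existence I would let $\mu=\min\{S_0,\ldots,S_{n-1}\}$ and take $p$ to be the \emph{first} index in $\{0,\ldots,n-1\}$ at which $\mu$ is attained, then verify the window-minimum condition directly: for $j$ with $p+j\le n-1$ one has $S_{p+j}\ge \mu=S_p$ by minimality, while for a wrapped index $p+j=n+r$ with $r\in\{0,\ldots,p-1\}$ the choice of the first minimizer forces $r$ to be a non-minimizer, so $S_r\ge\mu+1$ and hence $S_{p+j}=S_r-1\ge\mu=S_p$. For uniqueness I would suppose two distinct working positions $p<p'$ in $\{0,\ldots,n-1\}$ and extract a contradiction from the drift relation: choosing the offset $j=p'-p$ in the window of $p$ gives $S_p\le S_{p'}$, while choosing $j=n-(p'-p)$ in the window of $p'$ wraps exactly one full period and gives $S_{p'}\le S_{n+p}-\text{(shift)}=S_p-1<S_p$, an impossibility.

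The hard part will be the uniqueness half combined with the correct treatment of ties in the minimum: the wrap-around bookkeeping governed by $S_{j+n}=S_j-1$ must be paired with a consistent tie-breaking rule (taking the first rather than the last minimizer) so that exactly one shift — neither zero nor several — satisfies the window-minimum condition. Everything else is routine arithmetic with the partial sums. I would remark that this argument is precisely the $k=1$ instance of the Dvoretzky--Motzkin cycle lemma, so one could alternatively cite that result and merely supply the translation in the first two paragraphs.
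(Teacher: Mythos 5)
Your proposal is correct and follows essentially the same route as the paper: the paper states the lemma as a citation (Corollary 15.4 in Janson's survey) and, immediately afterwards, sketches exactly your mechanism — pass to the partial sums $W_j=\sum_{i\le j}(y_i-1)$ with $W_n=-1$ and shift at the \emph{first} time the minimum is attained — so your window-minimum argument with the drift relation $S_{j+n}=S_j-1$ is precisely the standard $k=1$ cycle-lemma proof underlying that citation. Your existence and uniqueness steps both check out (and, contrary to your closing worry, uniqueness needs no tie-breaking rule: your own wrap-around inequality $S_{p'}\le S_{n+p}=S_p-1$ already rules out a second window-minimum position, ties in $\mu$ included).
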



\begin{figure}[htbp]
\centering
              \includegraphics[width=\textwidth]{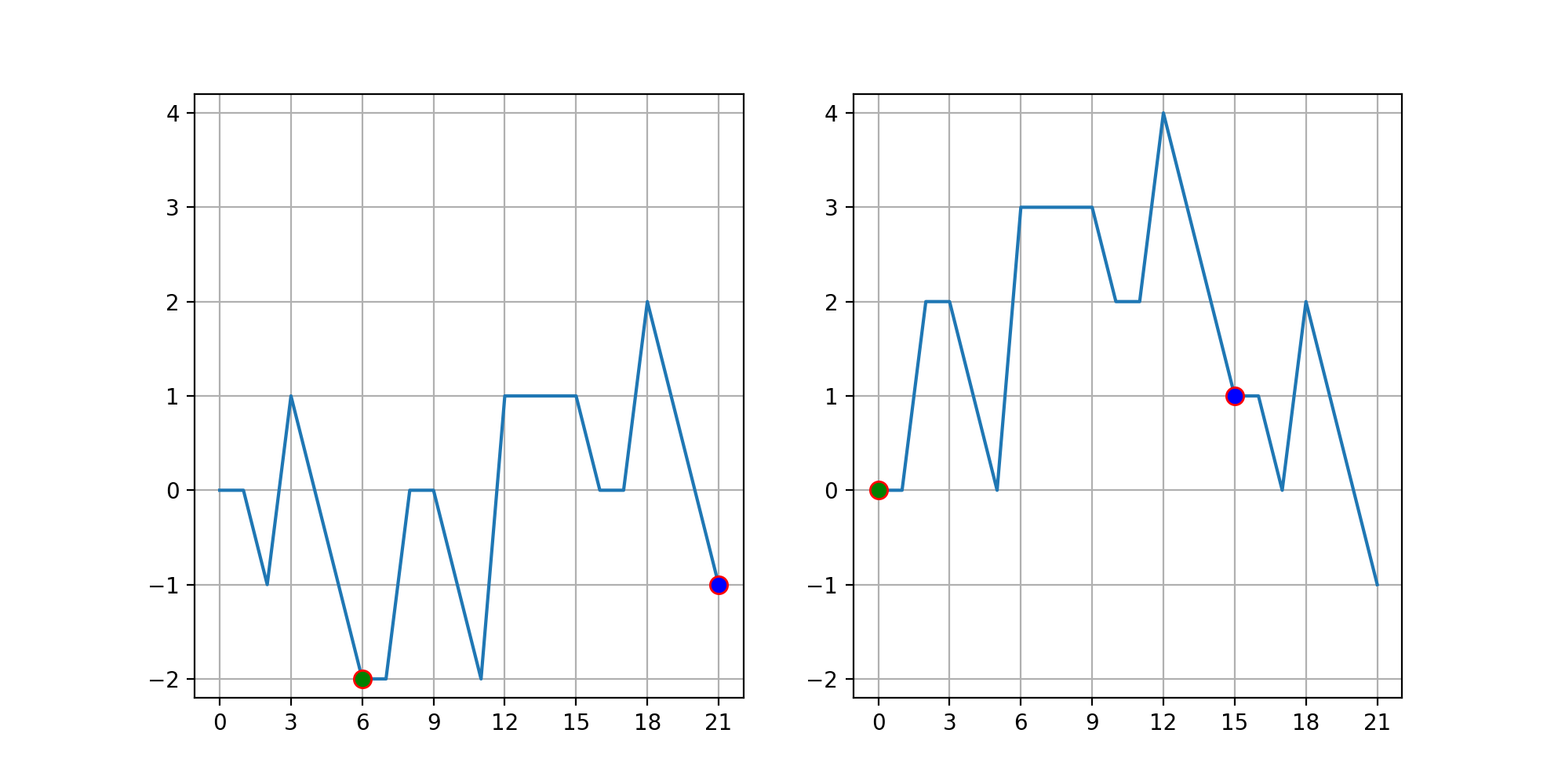}
              \caption{An example of the cycle shift that sends a random walk bridge to a \L ukasiewicz path.}
              \label{figCyclicShift}
\end{figure}

 The relevant cyclic shift can be identified as follows. First note that the sequence $y_1, \ldots, y_n$ corresponds to a path $W_0 = 0$, $W_j = \sum_{i = 1}^j (y_i - 1)$, $j = 1, \ldots, n$, and that for this path $W_n = -1$. Let $m = \min_{ 0 \leq j \leq n} W_j$, and let $\wh j$ be the first time when the path achieves this minimum. Then the relevant cyclic shift of $W_j$ is the path for the random walk with the steps given by $(y_{\wh j+1} - 1, \ldots, y_n - 1, y_1 - 1, \ldots y_{\wh j} - 1)$. It is easy to see that the resulting path is a \L ukasiewicz path and, therefore, it corresponds to a tree $T \in \Omega_n$.
 
A cyclic shift is illustrated in Fig. \ref{figCyclicShift}.

Now we define an algorithm for the generation of trees in $\Omega_{k, n}$.
Define the probability mass function $\wh w_j^\ast$ as follows:

\begin{equation}
\label{defiHatShifted}
\wh w_0^\ast := 0; \, 
\wh w_j^\ast := \frac{w_j^\ast}{1 - \alpha}, \text{ for } j = 1, 2, \ldots,
\end{equation}
where $w_j^\ast$ is the $\alpha$-shifted distribution from Definition \ref{defiAlphaShift}.\footnote{In the proofs of our theorems the parameter $\alpha$ is given to us and satisfy the assumption that $k = \alpha n + O(1)$ as $n \to \infty$. If $\alpha$ is not known and the task is simply to generate the tree with $k$ leaves and $n$ vertices, then one can take $\alpha = k/n$.}

Let the random sequence $\xi'_1, \ldots, \xi'_{n - k}$ have the following distribution 
\begin{equation}
\label{distrXiPrime}
\P(\xi'_1 = j_1; \ldots; \xi'_{n - k} = j_{n - k}) = 
\begin{cases}
\frac{1}{Z}\prod_{i = 1}^{n - k} \wh w_{j_i}^\ast, & \text{if } \sum_{i = 1}^{n - k} j_i = n - 1, 
\\
0, & \text{otherwise.}
\end{cases}
\end{equation} 
where $Z$ is a normalizing constant. In other words, $\xi'_1, \ldots, \xi'_{n - k}$ has the distribution of i.i.d. random variables with distribution $\wh w_{j}^\ast$, conditioned to have the sum $n - 1$.

\section*{Algorithm A}
\begin{enumerate}
\item Generate a sample of $\xi'_1, \ldots, \xi'_{n - k}$ with distribution as in (\ref{distrXiPrime}). 
\item Add $k$ zeros to the sequence $\xi'_i$ and let $\xi''=(\xi''_i)_{i\leq n}$ be a uniform permutation of the resulting sequence.
\item Apply the (unique) cyclic shift to the resulting sequence $(\xi''_i)_{i\leq n}$ so that the result is in the image of the map $\sigma$ defined in (\ref{defiSigma}).  Let the result be denoted $\xi_1, \ldots, \xi_n$.
\item Obtain a tree $T \in \Omega_{k, n}$ as $\sigma^{-1}(\xi_1, \ldots, \xi_n)$. 
\end{enumerate}

 It is assumed that the first step of this algorithm is implemented efficiently as described in Devroye \cite{devroye2012}.


\begin{theo}
\label{theoAlgorithm}
 Let the offspring distribution $w = (w_j)_{j = 0, 1, \ldots}$ satisfy Condition (\ref{basicCondition}). Suppose that $\alpha \in (0, 1 - 1/\wh \nu)$ and $(k, n) \in PP(w)$.  Then, algorithm A produces a tree $T \in \Omega_{k,n}$ with the probability distribution $\PP_{k, n}$ corresponding to weights $w_0, w_1, \ldots.$ If $k = \alpha n + O(1)$,  then the expected running time of the algorithm is $O(n)$, as $n \to \infty$.
\end{theo}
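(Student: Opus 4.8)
The statement has two parts --- correctness of the output law and the linear expected running time --- and I would treat them separately. For correctness, the plan is first to reduce the target law $\PP_{k,n}$ to a product over out-degree counts. For a tree $T \in \Omega_{k,n}$ with degree counts $(n_0(T), n_1(T), \ldots)$, the Galton--Watson weight is $\prod_{j \geq 0} w_j^{n_j(T)} = w_0^k \prod_{j \geq 1} w_j^{n_j(T)}$. Using the identities $\sum_{j \geq 1} n_j(T) = n-k$ and $\sum_{j \geq 1}(j-1)n_j(T) = k-1$, which hold for every $T \in \Omega_{k,n}$, the exponential reweighting $w_j^\ast = C w_j (t^\ast)^{j-1}$ of Definition \ref{defiAlphaShift} shows that $\prod_{j \geq 1}(w_j^\ast)^{n_j(T)}$ differs from $\prod_{j \geq 0} w_j^{n_j(T)}$ only by a factor $w_0^k\, C^{-(n-k)} (t^\ast)^{-(k-1)}$ depending on $(k,n)$ alone. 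Hence $\PP_{k,n}(T)$ is proportional, on $\Omega_{k,n}$, to $\prod_{j \geq 1}(w_j^\ast)^{n_j(T)}$.

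Next I would compute the law that Algorithm A induces on $\Omega_{k,n}$. Because the sequence in (\ref{distrXiPrime}) is exchangeable and Step 2 applies a uniform permutation, the probability that $\xi''$ equals a fixed length-$n$ word $z$ with degree counts $(k, n_1, n_2, \ldots)$ depends only on those counts; a short multinomial calculation (with $\wh w_j^\ast = w_j^\ast/(1-\alpha)$ from (\ref{defiHatShifted})) gives $\P(\xi'' = z) = A_{k,n}\prod_{j \geq 1}(w_j^\ast)^{n_j}$, where $A_{k,n}$ is a constant independent of $z$. By Lemma \ref{lemmaCyclicShift}, the cyclic shift in Step 3 sends exactly one representative of each orbit to a tree word, and since every word in $B_{n-1,n}$ is aperiodic (a period $p \mid n$ with $p < n$ would force the non-integer partial sum $(n-1)p/n$), each orbit consists of exactly $n$ distinct words, all with the same degree counts and hence equiprobable. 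Therefore the algorithm outputs $T$ precisely when $\xi''$ lies in the orbit of $\sigma(T)$, giving $\P(\text{output} = T) = n\, A_{k,n}\prod_{j \geq 1}(w_j^\ast)^{n_j(T)}$. This is proportional to $\PP_{k,n}(T)$ by the previous paragraph, and since both are probability measures on $\Omega_{k,n}$ they coincide.

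For the running time, Steps 2--4 are clearly $O(n)$: a Fisher--Yates shuffle, a single pass computing the walk $W_j$ and its first argmin to locate the cyclic shift, and the linear-time inverse of the map $\sigma$ from (\ref{defiSigma}). The only delicate step is Step 1. Here I would invoke Devroye \cite{devroye2012}, whose method samples $n-k$ i.i.d. variables conditioned on a prescribed sum in expected linear time, provided the target sum lies within $O(\sqrt{n})$ of the unconditioned mean. Since the unit mean of the $\alpha$-shift together with $w_0^\ast \cdot 0 = 0$ yields $\sum_{j \geq 1} j w_j^\ast = 1$, the distribution $\wh w^\ast$ has mean $\E[\xi'] = (1-\alpha)^{-1}$, so the expected value of $\sum_{i = 1}^{n-k}\xi'_i$ is $(n-k)/(1-\alpha) = n + O(1)$ under $k = \alpha n + O(1)$. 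Thus the conditioning value $n-1$ deviates from the mean by only $O(1)$, well inside one standard deviation, and the finite variance $(\sigma^\ast)^2$ of (\ref{variance}) --- indeed the exponential moment inherited from Condition (\ref{basicCondition}) --- supplies the local limit estimate that Devroye's bound requires.

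I expect the main obstacle to be the correctness bookkeeping rather than the timing. The crux is to verify that the uniform permutation in Step 2 makes all $n$ cyclic shifts of a given multiset equiprobable, so that the \emph{deterministic} shift in Step 3 introduces no bias, and that aperiodicity pins the orbit size at exactly $n$; both feed directly into the identity $\P(\text{output} = T) = n\, A_{k,n}\prod_{j \geq 1}(w_j^\ast)^{n_j(T)}$. A secondary technical point is matching Devroye's hypotheses to the conditioned distribution $\wh w^\ast$, which is precisely where the exponential-moment half of Condition (\ref{basicCondition}) is used to guarantee the required local central limit behavior.
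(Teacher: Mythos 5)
Your proposal is correct and follows essentially the same route as the paper's proof: both rest on Lemma \ref{lemmaCyclicShift}, the exchangeability of $(\xi''_i)$, the invariance of the fixed-sum conditional law under the exponential tilting $w_j^\ast = C w_j (t^\ast)^{j-1}$, and Devroye's sampler combined with a local central limit estimate for Step 1. The only difference is presentational: you compute the algorithm's output law forward, via the degree-count identities $\sum_{j\geq 1} n_j(T) = n-k$ and $\sum_{j\geq 1}(j-1)n_j(T) = k-1$ together with the orbit-size-$n$ (aperiodicity) observation, whereas the paper decomposes the target law $\PP_{k,n}$ backward through the algorithm's steps --- the ingredients are identical.
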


Note that the essential corollary of this theorem is that for a random tree $T \in \Omega_{k, n}$ with the law $\PP_{k, n}$, the distribution of $\sigma(T)$ equals the distribution of the random sequence $(\xi_1, \ldots, \xi_n)$ produced by the algorithm. This gives us the ability to study the \L ukasiewicz  and related processes for the random trees in $\Omega_{k, n}$.

We will first prove Theorem \ref{theoAlgorithm}. After this theorem is proved, the general strategy is as follows. We use the fact that the sequence $\big(\xi''_i\big)_{i = 1}^n$ from Algorithm A is exchangeable and apply tools from Billingsley \cite{billingsley1968} to show that the partial sums of this sequence converge (after rescaling) to the Brownian bridge. Then, properties of the discrete Vervaat's transform ensure that the partial sums of $\big(\xi_i\big)_{i = 1}^n$ converge after rescaling to a Brownian excursion. Then, we adapt tools from Marckert and Mokkadem \cite{marckert_mokkadem2003} in order to show that this convergence implies the re-scaled convergence of the height and the contour processes to a Brownian excursion. The adaptation is necessary, because the sequence of i.i.d. random variables underlying $\big(\xi_i\big)_{i = 1}^n$ is conditioned not only on the sum being equal to $-1$ but also on the number of increments $-1$ being equal to $k$. The changes to the argument are straightforward but we give the argument in full detail for the convenience of interested reader. 

Together, this gives a proof of Theorem \ref{theoConvergenceToExcursion} and, consequently, of Theorem 
\ref{theoTreeConvergence}. 

For the proof of Theorem \ref{theoMostLikelyType}, we again start with results of Theorem \ref{theoAlgorithm}. In this case, we note that it is enough to show that $n_{\xi'}(i)$, the number of occurrences of increment $i$ in the sequence $\big(\xi'_i\big)_{i = 1}^{n - k}$, after normalization by $(n - k)^{-1}$ converges  almost surely to $\wh w^\ast_i$. In  order to show this, we use an idea from a paper of Broutin and Marckert\cite{broutin_marckert2014}. Roughly speaking, the idea is that under certain conditions the events which are rare under a random walk with i.i.d. increments $\hat \zeta_j$, remain rare, even if a condition is imposed on the random walk (for example, if the variables $\hat \zeta_j$ are required to add up to a specific constant). By using this trick, we can extend the almost sure convergence for $n_{\hat \zeta}(i)$, where $(\hat \zeta_j)_{j = 1}^{n - k}$ are i.i.d., to the almost sure convergence of $n_{\xi'}(i)$. This allows us to prove Theorem \ref{theoMostLikelyType}

The rest of the paper is organized as follows. In Sect. \ref{section_literature} we describe the relation of our results to previous work.  In Sect. \ref{section_proofs}, we give detailed proofs. In Sect. \ref{section_conclusion},  we discuss some remaining problems. Appendix is devoted to the proof of an auxiliary local limit theorem.

%

\subsection{Relation to Previous Work}
\label{section_literature}

Excellent reviews  of the vast literature about the asymptotic behavior of finite random trees can be found in a book \cite{flajolet_sedgewick2009} by Flajolet and Sedgewick, and in review articles \cite{legall2005} by Le Gall and \cite{janson2012} by Janson.

From early on, it was observed that different families of random trees often have some similar macroscopic properties. For example, the tree height distribution has the same functional form for many tree families, although the distribution parameters could depend on the specific choice of the family. 

In a sequence of papers, \cite{aldous91a}, \cite{aldous91b}, \cite{aldous93}, Aldous provided an explanation for this phenomenon by showing that certain sequences of finite random trees converge under rescaling in the Gromov--Hausdorff topology to  universal objects that he called the \emph{continuum random trees (CRTs)}. In particular, he showed that Galton--Watson trees with a finite-variance offspring distribution converge under rescaling to a special CRT, which he called the \emph{Brownian CRT}. (We will sometime refer to this limit as  the Aldous CRT.) The tree is characterized by the property that its contour process is the Brownian excursion. Aldous' proof of this result is based on graph theoretical arguments which showed that all such Galton--Watson trees converge under rescaling to the same limit independent of the specific offspring distribution.  Then proof was concluded by an investigation of a particular case of GW trees with geometric offspring distribution. 

 Marckert and Mokkadem in \cite{marckert_mokkadem2003} gave a different proof of this result under the condition that the offspring distribution has exponentially declining tails. In their proof, they establish the convergence (under rescaling) of the \L ukasiewicz process to the Brownian excursion and then, show that the paths of the height process must be uniformly close to the paths of the \L ukasiewicz process. For this implication, they use a relation of Galton--Watson trees to queues and an explicit formula for the height process in terms of the \L ukasiewicz process.  Then one can handle the height process by using methods from the classic theory about ladder heights (Feller \cite{feller71}, Chapter XII). Finally, the rescaled convergence of the height process implies the rescaled convergence of the contour process. 
 
 Duquesne in \cite{duquesne2003} extends Aldous' result to Galton--Watson trees that may have infinite variance. He relies heavily on the machinery developed in papers of Duquesne and Le\phantom{ }Gall \cite{duquesne_legall2002} and of Le\phantom{ }Gall and Le\phantom{ }Jan \cite{legall_lejan98}.  While their approach is similar to the approach of Marckert and Mokkadem in \cite{marckert_mokkadem2003}, however, in the case of infinite variance  the \L ukasiewicz process converges under rescaling to an excursion of a L\'evy process, and it is not immediately clear what is the candidate for the rescaled limit of the associated height process.  The authors of these papers defined a height process for the L\'evy process and then, showed that the height process of the GW trees converges under rescaling to this continuous-time height process.  In this way, they showed that Galton--Watson trees with infinite variance offspring distribution converge under rescaling to \emph{L\'evy CRTs}, encoded by these height processes. (There are several such CRTs, depending on the parameter of the L\'evy process.)
 
  An alternative proof of Duquesne's theorem was given by  Kortchemski in \cite{kortchemski2013}. The proof is still based on the results in Duquesne and Le\phantom{ }Gall \cite{duquesne_legall2002}. However, Kortchemski uses the fact that the probability measure of events conditioned on the size of a GW tree equal to $n$ is absolutely continuous with respect to the probability measure of the events conditioned on the size of the tree being at least $n$, and this type of conditioning was already handled in Sect. 2.5 of \cite{duquesne_legall2002}.  A similar absolute continuity relation can be obtained for the continuous limit objects, and Kortchemski concludes the proof by showing the uniform convergence of the Radon--Nikodym densities. 
  
  By a similar method, in \cite{kortchemski2012}, Kortchemski proved that if one conditions a Galton--Watson tree on the number of leaves, instead of the total number of vertices, then one still has the convergence under rescaling to the  Brownian CRT or a L\'evy CRT. The key additional ingredient in the proof is the fact that a GW tree with $k$ leaves has $k/w_0$ total vertices with high probability. He also proved an extension of this result to the case when one conditions the GW tree on the number of vertices that have degrees in a given non-empty set of integers.


The applicability of these universality results has been significantly enlarged in \cite{haas_miermont2012} by Haas and Miermont. They considered a big class of trees with Markov branching property. In random trees with this property, the subtrees above some given height are independent with a law that depends only on the tree size, and the size is measured either as the number of leaves or vertices.  Their result is that these trees converge under rescaling to universal random objects, called self-similar fragmentation trees. 

In particular, Haas and Miermont show  in \cite{haas_miermont2012}  that the convergence under rescaling holds in the case when the Galton--Watson trees are conditioned either on the total progeny,  or on the number of leaves (but not both).  In \cite{rizzolo2015}, Rizzolo  used the method of \cite{haas_miermont2012} to show the convergence under rescaling of the Galton--Watson trees conditioned on the number of vertices that have degrees in a specified set. The proof of these results is based on the theory of fragmentation processes, and this gives a completely different approach to results by Aldous, Duquesne, Le Gall, Le Jan, and Kortchemski. 

In this paper, we use the approach based on random walks and their continuous-time limits and not on fragmentations processes. 


The difference of this paper from earlier studies described above is that we focus on the situation when a Galton--Watson random tree is conditioned on both the number of  total progeny and the number of leaves. In particular,
 we consider a situation in which these two requirements are at odds with each other. In a sense, the obtained results are about an atypical event since they are concerned with the structure of trees under an additional condition super-imposed on the usual conditional Galton--Watson tree.

A similar approach is taken in papers \cite{labarbe_marckert2007} by Labarbe and Marckert,  and \cite{broutin_marckert2014} by Broutin and Marckert. The paper \cite{labarbe_marckert2007}  considers Bernoulli random walks (that is, walks with increments in $\{-1, +1\}$) and associated processes, -- bridges, meanders and excursions, -- conditioned on a given number of peaks. A well-known bijection maps the contour process of an ordered tree to a Bernoulli excursion, and the results in \cite{labarbe_marckert2007} imply the convergence of uniformly distributed ordered trees with a given number of leaves to the Aldous continuum random tree  (see Sect. 1.1 in their paper).\footnote{In addition, the paper \cite{labarbe_marckert2007} studies the counting process for leaves under DFS exploration of the tree. For extension of these results to GW trees, see a recent paper of Th\'evenin \cite{thevenin2020}.} This is a particular case of our Theorem \ref{theoTreeConvergence} for the case when weights $w_j$ are geometric, $w_j = 2^{-j - 1}$.

For ordered trees with non-uniform distributions,  the contour process is still an (interpolated) excursion with increments $\{-1, +1\}$ but its distribution is more complicated and its rescaled convergence is difficult to establish directly. So, instead of the contour process, we use the \L ukasiewicz process, that has a more tractable distribution, in order to prove the rescaled convergence to the Brownian excursion. Then, we infer that this implies the convergence for the contour process as well, by applying the techniques developed by Marckert and Mokkadem in \cite{marckert_mokkadem2003}.
This allows us to derive the convergence under re-scaling for a broader class of random trees than that considered in the paper of Labarbe and Marckert \cite{labarbe_marckert2007}. 

From the technical viewpoint, the methods by which we obtain the re-scaled convergence of the \L ukasiewicz process to the Brownian excursion, also differ from methods in \cite{labarbe_marckert2007}.
In particular, in \cite{labarbe_marckert2007} the authors use combinatorics to explicitly evaluate the number of simple walk paths with a given number of peaks between two lattice points. Then, they use the Stirling formula and obtain the finite-dimensional distributions of the limit by calculating the asymptotics of these explicit expressions. The combinatorics at the heart of this approach becomes significantly harder for paths with a more general set of increments, and we choose to use a different approach. 

For tightness, \cite{labarbe_marckert2007}  uses a combinatorial trick that maps simple walks with increments $\pm 1$ to walks with $\{0, 1\}$ increments  in a non-trivial fashion: peaks and valleys of the original walk path correspond to increment $1$, and the other points between steps of the original path to increment $0$. This map is injective and converts conditioning on the number of peaks to conditioning on the final point of the walk, which is easier to handle. Then, it is noted that a formula for the inverse map involves summation of independent geometric random variables, and this fact is used in order to show that the tightness also holds for the original random walk. Additional conditioning on the final point of the original random walk (needed for bridges and excursions) is handled by using the explicit formulas for the number of paths with a given number of peaks. 

This argument is hard to generalize to walks with more general increments (and so to \L ukasiewicz walks) because of the absence of the required combinatorial trick. 

Instead, we use the connection of discrete walks with the balls-in-boxes model, as described by Kolchin \cite{kolchin84} and  Janson \cite{janson2012}, and with exchangeable random variables. This allows us to apply the results in Billingsley's book \cite{billingsley1968}  that establish the re-scaled convergence for partial sums of a certain class of exchangeable random variables.

In a paper of Broutin and Marckert, \cite{broutin_marckert2014}, the entire degree sequence of a tree is specified.\footnote{See also an application of this approach to random forests in Lei \cite{lei2019}, and an extension in a paper of Ojeda et al. \cite{oht2021} where different assumptions about the degree sequence are used.} More precisely, let $d_i(\tau)$ denote the number of vertices of out-degree $i$ in the tree $\tau$, and let $\vec d(\tau) = \{d_i(\tau)\}_{i = 0}^\infty$ be the degree sequence of $\tau$. Also, let $|\vec d|=\sum_{i = 0}^\infty d_i $ and $\Delta(\vec d) =\max \{d_i\}$ be the size and the maximum degree of the tree. Assume that $\vec d^{(1)}, \vec d^{(2)}, \ldots $ be a sequence of degree sequences so that $|\vec d^{(j)}| \to \infty $, $\Delta(\vec d^{(j)}) = o(\sqrt {|\vec d^{(j)}|})$, and the measure $\vec d^{(j)}/|\vec d^{(j)}|$ converges to a probability measure $\mu_0, \mu_1, \ldots$ that has finite second moment. 

Take the uniform measure on the set of trees with degree sequence $\vec d^{(j)}$ and let the correspondent random tree be denoted $T_j$. Broutin and Marckert showed that the sequence of trees $T_j$ converges under rescaling to the Brownian CRT in the Gromov--Hausdorff topology. The proof is based on the approach of Marckert and Mokkadem \cite{marckert_mokkadem2003} and it is  similar to our approach. In particular, their proof of the convergence of \L ukasiewicz process uses Billingsley's theorem 24.1 in \cite{billingsley1968}. In their main theorem, the conditions for applicability of Billingsley's theorem hold by assumption on $\Delta(\vec d^{(j)})$.

The results of \cite{broutin_marckert2014} could be used to prove our main result, Theorem \ref{theoTreeConvergence}, provided one can show that as $n$ grows, the degree sequences of the trees in $\Omega_{n, k}$ (after normalization) converge in probability  to the $\alpha$-shifted distribution and that they satisfy the condition that $\Delta \vec d^{(j)} = o\Big(\sqrt {|\vec d^{(j)}|}\Big)$. 

In this paper we choose a different, more direct approach to the proof of the main result. However, Theorem \ref{theoMostLikelyType} contains the main ingredient of the different approach. Namely, in this theorem we show that as $n\to \infty$, the (normalized) degree sequence of a tree in $\Omega_{n, k}$ almost surely converges to the $\alpha$-shifted distribution.

%

\section{Proofs}
\label{section_proofs}
\subsection{Preliminary Results}
\label{sectionPreliminary}

We start with proving some results about the existence of the $\alpha$-shifted distribution and about its variance. 

\begin{lemma}
\label{lemmaMonotonicity}
The function $\wh\psi(t)$ in (\ref{defiPsi}) is an increasing continuous function on $(0, \rho)$. 
\end{lemma}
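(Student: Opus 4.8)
The plan is to recognize $\wh\psi(t)$ as the mean of an exponentially tilted distribution and to show that its derivative is a positive multiple of the corresponding variance, which is manifestly non-negative. First I would rewrite the function using $\theta(t) - w_0 = \sum_{i \ge 1} w_i t^i$ and $t\theta'(t) = \sum_{i \ge 1} i w_i t^i$, so that
\[
\wh\psi(t) = \frac{\sum_{i\ge 1} i w_i t^i}{\sum_{i\ge 1} w_i t^i} =: \frac{A(t)}{B(t)}.
\]
Both $A$ and $B$ are power series with the same radius of convergence $\rho$ as $\theta$ (termwise differentiation and multiplication by $t$ do not change $\rho$), hence both are real-analytic on $(0,\rho)$. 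Since $w_i \ge 0$ and $\sum_{i\ge 1} i w_i = \theta'(1) = 1 > 0$ forces $w_{i_0} > 0$ for some $i_0 \ge 1$, we get $B(t) \ge w_{i_0} t^{i_0} > 0$ for every $t \in (0,\rho)$. Therefore $\wh\psi = A/B$ is real-analytic, in particular continuous, on $(0,\rho)$, which settles the continuity claim.

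For monotonicity I would differentiate termwise (valid inside the radius of convergence) and apply the quotient rule to obtain $\wh\psi' = (A'B - AB')/B^2$. Expanding the numerator as a double series and symmetrizing in the two summation indices gives
\[
A'B - AB' = \sum_{i,j \ge 1} i(i-j)\, w_i w_j\, t^{i+j-1} = \frac12 \sum_{i,j \ge 1} (i-j)^2\, w_i w_j\, t^{i+j-1} \ge 0,
\]
where absolute convergence of the double sum on $(0,\rho)$ (immediate since $\sum_{i,j} i^2 w_i w_j t^{i+j-1} < \infty$ for $t < \rho$) licenses the rearrangement. The cleanest way to package this is probabilistically: letting $N$ be a random variable with $\P(N = i) = w_i t^i / B(t)$ for $i \ge 1$, one checks $\wh\psi(t) = \E[N]$ and $\wh\psi'(t) = \Var(N)/t \ge 0$, so $\wh\psi$ is non-decreasing on $(0,\rho)$.

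I expect the only genuine subtlety to be the precise sense of ``increasing.'' The derivative is strictly positive exactly when the tilted law of $N$ is non-degenerate, i.e. whenever at least two of the weights $w_i$ with $i \ge 1$ are positive. It vanishes identically only in the degenerate case where $(w_i)_{i \ge 1}$ is concentrated on a single value $m$; then the constraints $\E L = 1$ and $w_0 \ne 0$ force $w_m = 1/m$, $w_0 = 1 - 1/m$ with $m \ge 2$, and $\wh\psi \equiv m$ is constant. Thus $\wh\psi$ is strictly increasing off this degenerate situation and non-decreasing in all cases, which is exactly what is needed to define $\wh\nu$ in (\ref{defiNu}) as a limit and to invert $\wh\psi$ when solving (\ref{equForAlphaShift}). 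Beyond this, the remaining work is pure bookkeeping, namely recording the absolute convergence that justifies both the termwise differentiation and the symmetrization; no hard estimate is required.
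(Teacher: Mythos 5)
Your proof is correct and takes essentially the same route as the paper: the paper's proof likewise introduces the tilted law $p_i(t) = w_i t^i/(\theta(t)-w_0)$ on $\{1,2,\ldots\}$ (your random variable $N$ is its $X_t$) and computes $t\,\wh\psi'(t) = \Var(X_t)$, of which your symmetrized double sum $\tfrac12\sum_{i,j\ge 1}(i-j)^2 w_i w_j t^{i+j-1}$ is just an algebraic rendering. Your closing caveat is in fact sharper than the paper, whose proof asserts $\Var(X_t)>0$ unconditionally even though strict positivity fails precisely when $(w_i)_{i\ge 1}$ is a point mass (e.g.\ $w_0 = w_2 = 1/2$, where $\wh\psi\equiv 2$), a situation compatible with Condition (\ref{basicCondition}) and $w_1\ne 1$; so your observation that $\wh\psi$ is then constant, and only non-decreasing, is a genuine (if contextually harmless) refinement of the published argument.
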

\begin{proof}
Let $\wh \theta(t) = \theta(t) - w_0 = \sum_{i = 1}^\infty w_i t^i$, and let 
\bal{
p_i(t) = \frac{w_i t^i}{\wh \theta(t)}  \text{ for } i = 1, 2, \ldots, 
}
for $t \in (0, \rho)$, and zero otherwise. 
 Then, for every $t \in (0, \rho)$,  $p_i(t)$ is a probability mass function (pmf) of a discrete random variable $X_t$. 
 
 By definition, $\wh \psi(t) = t \theta'(t)/\wh \theta(t) =  t \wh\theta'(t)/\wh \theta(t)$.
Then we calculate:
\bal{
t \wh \psi'(t) &= \frac{t \wh\theta'(t)}{\wh\theta(t)} + \frac{t^2 \wh\theta''(t)}{\wh\theta(t)}
 - \frac{t^2 \wh \theta'(t)^2}{\wh \theta(t)^2}
 \\
 &=  \sum_{i = 1}^\infty i p_i(t) + \sum_{i = 1}^\infty i(i - 1) p_i(t) -
 \Big(\sum_{i = 1}^\infty i p_i(t)\Big)^2 
 \\
 &= \E(X_t^2) - (\E X_t)^2 = \Var(X_t) > 0.
 } 
\end{proof}

\begin{lemma}
\label{lemmaExistence}
Let $w_0, w_1, \ldots $ be a probability distribution with unit mean and finite variance. If $\alpha < 1 - 1/\wh \nu$  with $\wh \nu$ as defined in (\ref{defiNu}), then there exist $C > 0$ and $t^\ast > 0$ that solve problem (\ref{equForAlphaShift}). 
\end{lemma}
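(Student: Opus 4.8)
The plan is to reduce the pair of equations in (\ref{equForAlphaShift}) to a single scalar equation for $t^\ast$ and then solve it by the intermediate value theorem. First I would observe that the second equation, $C = 1/\theta'(t^\ast)$, merely \emph{defines} $C$ once $t^\ast$ is known, and that $C > 0$ automatically, since $\theta'(t) = \sum_{i \geq 1} i w_i t^{i - 1} > 0$ for every $t > 0$ (the mean being $1$, some $w_i$ with $i \geq 1$ is positive). So the whole content lies in the first equation, which I would rewrite by dividing both sides by $\theta(t^\ast) - w_0 > 0$ as
\begin{equation*}
\wh\psi(t^\ast) = \frac{t^\ast \theta'(t^\ast)}{\theta(t^\ast) - w_0} = \frac{1}{1 - \alpha}.
\end{equation*}
Thus it suffices to produce $t^\ast \in (0, \rho)$ at which $\wh\psi$ takes the value $1/(1 - \alpha)$.

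Next I would invoke Lemma \ref{lemmaMonotonicity}, which tells us that $\wh\psi$ is continuous and strictly increasing on $(0, \rho)$; hence its image is exactly the open interval $\big(\lim_{t \downarrow 0} \wh\psi(t),\ \lim_{t \uparrow \rho} \wh\psi(t)\big)$, whose right endpoint is $\wh\nu$ by (\ref{defiNu}). So the equation is solvable precisely when $1/(1 - \alpha)$ lies strictly between the two limits. For the upper end, the hypothesis $\alpha < 1 - 1/\wh\nu$ rearranges to $1/(1 - \alpha) < \wh\nu$, so by the definition of $\wh\nu$ as a supremum I can pick $t_1 < \rho$ with $\wh\psi(t_1) > 1/(1 - \alpha)$. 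For the lower end I would compute the limit at the origin: writing $\wh\psi(t) = \big(\sum_{i \geq 1} i w_i t^{i - 1}\big) / \big(\sum_{i \geq 1} w_i t^{i - 1}\big)$ and letting $t \downarrow 0$, the $i = 1$ terms dominate and give $\lim_{t \downarrow 0} \wh\psi(t) = 1$ (using $w_1 > 0$). Since $\alpha > 0$ forces $1/(1 - \alpha) > 1$, there is a small $t_0 > 0$ with $\wh\psi(t_0) < 1/(1 - \alpha)$.

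Finally, the intermediate value theorem applied to $\wh\psi$ on $[t_0, t_1]$ yields the desired $t^\ast$, and setting $C = 1/\theta'(t^\ast)$ completes the solution; since $t^\ast < \rho$, the induced weights $w_j^\ast = C w_j (t^\ast)^{j - 1}$ are summable, so the construction of Definition \ref{defiAlphaShift} is legitimate. I expect the only delicate point to be the behaviour of $\wh\psi$ at the lower endpoint $t \downarrow 0$: the clean limit $1$ relies on $w_1 > 0$, and when $w_1 = 0$ the limit is instead the least $m \geq 1$ with $w_m > 0$, so that one additionally needs $1/(1 - \alpha) > m$ for a solution to exist. Pinning down this endpoint — and reconciling it with the claim that the equations are always solvable for $\alpha \leq w_0$, which follows since there $1/(1 - \alpha) \leq 1/(1 - w_0) = \wh\psi(1)$ and $1/(1-\alpha) > 1 = \lim_{t \downarrow 0}\wh\psi(t)$, so a root sits in $(0, 1]$ — is where the argument must be handled with care; the upper endpoint, by contrast, is immediate from the hypothesis and the definition of $\wh\nu$.
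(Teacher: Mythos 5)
Your proposal is correct and follows the same skeleton as the paper's proof: reduce (\ref{equForAlphaShift}) to the scalar equation $\wh\psi(t^\ast) = 1/(1-\alpha)$, invoke Lemma \ref{lemmaMonotonicity} for continuity and monotonicity, and apply the intermediate value theorem between the two endpoint limits, setting $C = 1/\theta'(t^\ast)$ afterwards. Where you go beyond the paper is at the lower endpoint, and your caution there is warranted: the paper's proof simply asserts $\lim_{t \downarrow 0} \wh\psi(t) = 1$, which, as you observe, holds only when $w_1 > 0$; in general the limit is $m := \min\{i \geq 1 : w_i > 0\}$, so for $w_1 = 0$ solvability requires the additional condition $1/(1-\alpha) > m$. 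This is not a vacuous worry — for the binary distribution $w_0 = w_2 = 1/2$ one has $\wh\psi(t) \equiv 2$, hence $\wh\nu = 2$, and the hypothesis $\alpha < 1 - 1/\wh\nu = 1/2$ is satisfied by $\alpha = 1/4$ while $\wh\psi(t^\ast) = 4/3$ has no solution, so the lemma as literally stated (and the parenthetical claim of solvability for all $\alpha \leq w_0$) fails without $w_1 > 0$. The gap is immaterial for the main theorems, since a tree whose minimal positive out-degree is $m$ satisfies $n - 1 \geq m(n-k)$, so $(k,n) \in PP(w)$ with $k = \alpha n + O(1)$ forces $\alpha \geq 1 - 1/m$, which is exactly your extra condition up to the boundary case; but as a standalone statement the lemma needs either the hypothesis $w_1 > 0$ or the strengthened condition $1 - 1/m < \alpha < 1 - 1/\wh\nu$. (One further boundary case worth a sentence in a careful write-up: when $\{w_i\}_{i \geq 1}$ is supported on a single point, $\Var(X_t) = 0$ and $\wh\psi$ is constant, so the strict monotonicity in Lemma \ref{lemmaMonotonicity} also degenerates — consistently, your window $(1 - 1/m,\, 1 - 1/\wh\nu)$ is then empty.)
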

\begin{proof}
We can write  (\ref{equForAlphaShift}) as  $C = 1/\theta'(t^\ast)$ and 
\begin{equation}
\label{problem2}
\wh \psi(t^\ast) = \frac{1}{1 - \alpha}. 
\end{equation}
By Lemma \ref{lemmaMonotonicity}, the function $\wh \psi(t)$ is continuous and increasing on the interval $(0, \rho)$, and we have: $\lim_{t \to 0} \wh \psi(t) = 1$, $\lim_{t \to \rho} \wh \psi(t) = \wh \nu$. So if $(1 - \alpha)^{-1} < \wh \nu$, then the equation (\ref{problem2}) has a solution $t^\ast \in (0, \rho)$ by the intermediate value theorem, and we can set $C = 1/\theta'(t^\ast)$ since $\theta'(t^\ast)$ is well-defined and non-zero for $t \in (0, \rho)$.  
\end{proof}

\bigskip
\begin{lemma}
\label{lemmaVarianceAlpha}
The variance of the $\alpha$-shifted distribution $\{w_j^\ast\}$, $j = 0, 1, \ldots$,  is 
\begin{equation}
\label{varianceAlpha}
(\sigma^\ast)^2 = \frac{t^\ast \theta''(t^\ast)}{\theta'(t^\ast)},
\end{equation}
 where $t^\ast$ is the solution of equations (\ref{equForAlphaShift}).
\end{lemma}
\begin{proof}
The generating function for the distribution $\{w_j^\ast\}$ is 
\bal{
\theta^\ast(t) &:= \sum_{j = 0}^\infty w^\ast_j t^j  = \alpha + C w_1 t + C w_2 t^\ast t^2 + \ldots 
\\
&= \alpha + \frac{C}{t^\ast}\big(\theta(t^\ast t) - w_0\big).
}
Then the mean is  $(\theta^\ast)'(1) = C \theta'(t^\ast) = 1$ by (\ref{equForAlphaShift}). For probability distributions with unit mean, the variance equals the second derivative of the generating function evaluated at $t = 1$. So we get that the variance is 
\bal{
(\theta^\ast)''(1) = C t^\ast \theta''(t^\ast) = \frac{t^\ast \theta''(t^\ast) }{\theta'(t^\ast)}.
}
\end{proof}

%

\subsection{Proofs of Main Results}
In this section we prove Theorems \ref{theoAlgorithm}, \ref{theoConvergenceToExcursion} and \ref{theoMostLikelyType}, about the generation algorithm, the convergence of the contour process, and the most likely type of the degree profile,  respectively.

First, we establish some useful notation and give additional details about the relation of Galton--Watson trees and balls-in-boxes allocations.

Let $w_k$, $k = 0, 1, \ldots $ be a probability mass function with unit mean. It induces distributions on trees in $\Omega_n$, as conditioned Galton--Watson trees, and on allocations of the balls-in-boxes model $B_{n - 1, n}$. [The probability of an allocation  $(y_1, \ldots y_n)$ is $\propto \prod_{i = 1}^n w_{y_i}$.]  For the injective map $\sigma$ from (\ref{defiSigma}), the push-forward probability distribution on $\sigma(\Omega_n)$ equals the distribution on $B_{n - 1, n}$ conditioned to the image of the map $\sigma$.

The following result illustrates the relation between random trees and the balls-in-boxes model. 

For a balls-in-boxes allocation $y = (y_1, \ldots, y_n)$, let $N_s(y)$ be the number of those $y_i$ in allocation $y$ that are equal to  $s$. The \emph{occupation profile} of an allocation $y$ is the sequence $\big(N_s(y)\big)_{s = 0}^\infty$. In particular, $N_0(y)$ denotes the number of empty boxes in an allocation.  

Note that a cyclic shift does not change the probability of an allocation and does not change numbers $N_s(y)$. This fact and the cyclic shift lemma (see Lemma \ref{lemmaCyclicShift}) imply the following result. Recall that $n_s(T)$ denote the number of vertices of out-degree $s$ in a random tree $T\in \Omega$. Then, the joint distribution of the degree sequence $\big(n_s(T)\big)_{s = 0}^\infty$ coincides with the joint distribution of the occupation profile $\big(N_s(y)\big)_{s = 0}^\infty$ for a random allocation $y$ in $B_{n - 1, n}$.
\begin{lemma}[Lemma II.2.2 in \cite{kolchin84}]
\label{lemmaEquivalenceTreesAndBBM}
If the set $\Omega_n$ is not empty, then 
\bal{
\P(n_s(T)= n_s; \, s = 0, \ldots, n - 1) 
= \P(N_s(y) = n_s; \, s = 0, \ldots, n - 1). 
}
\end{lemma}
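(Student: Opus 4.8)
The plan is to realize both probabilities as sums of the common weight $\prod_{i=1}^n w_{y_i}$ over suitable subsets of $B_{n-1,n}$, and to match them using the cyclic-shift structure supplied by Lemma \ref{lemmaCyclicShift}. The two observations recorded just before the statement are the starting point: a cyclic shift leaves both the weight $\prod_{i=1}^n w_{y_i}$ and the occupation profile $\big(N_s(y)\big)_s$ unchanged. Hence the balls-in-boxes law on $B_{n-1,n}$ is invariant under the cyclic group $\Z/n\Z$, and so is the event $\{N_s(y) = n_s,\ s = 0, \ldots, n-1\}$. Moreover, under the injection $\sigma$ the degree sequence of $T = \sigma^{-1}(z)$ is exactly the occupation profile of $z$, i.e. $n_s(\sigma^{-1}(z)) = N_s(z)$, so the tree event $\{n_s(T) = n_s\}$ corresponds to $\{N_s(z) = n_s\} \cap \sigma(\Omega_n)$.

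First I would fix a target profile $(n_s)_{s=0}^{n-1}$ (both sides vanish unless $\sum_s n_s = n$ and $\sum_s s\, n_s = n-1$, so I assume this), and set $A = \{y \in B_{n-1,n} : N_s(y) = n_s\ \forall s\}$. Every $y \in A$ carries the same weight $W := \prod_s w_s^{\,n_s}$, and $A$ is a union of cyclic orbits. Writing $Z = \sum_{y \in B_{n-1,n}} \prod_i w_{y_i}$ and $Z' = \sum_{z \in \sigma(\Omega_n)} \prod_i w_{z_i}$ for the two normalizing constants (both positive, since $\Omega_n \ne \emptyset$ forces at least one tree of positive weight), one has
\[
\P(N_s(y) = n_s\ \forall s) = \frac{|A|\, W}{Z}, \qquad \P(n_s(T) = n_s\ \forall s) = \frac{|A \cap \sigma(\Omega_n)|\, W}{Z'}.
\]

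The crux is a clean count of orbits, and here the key structural fact is that every cyclic orbit in $B_{n-1,n}$ has full size $n$. Indeed, if $z \in B_{n-1,n}$ had minimal period $p \mid n$ with $p < n$, then $n-1 = \sum_{i=1}^n z_i = (n/p)\sum_{i=1}^p z_i$ would force $n/p$ to divide $n-1$; since $n/p$ also divides $n$ and $\gcd(n, n-1) = 1$, this gives $n/p = 1$, a contradiction. Thus all $n$ cyclic shifts of any allocation are distinct. Combined with Lemma \ref{lemmaCyclicShift} — exactly one shift in each orbit lies in $\sigma(\Omega_n)$ — this yields an exact $n$-to-$1$ surjection of $B_{n-1,n}$ onto $\sigma(\Omega_n)$ that is constant on orbits. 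In particular $|A \cap \sigma(\Omega_n)| = |A|/n$, and summing the weight over all orbits (each contributing $n$ times the weight of its unique representative in $\sigma(\Omega_n)$) gives $Z = nZ'$.

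Substituting these two identities into the second displayed expression turns it into $\frac{(|A|/n)\,W}{Z/n} = \frac{|A|\,W}{Z}$, which is precisely the first, completing the proof. The main obstacle is the full-orbit claim: without it the orbit sizes could vary and the cycle lemma would only yield a weighted, not a clean $n$-to-$1$, correspondence; the coprimality $\gcd(n,n-1)=1$ is exactly what rules out periodic allocations and makes both the count $|A \cap \sigma(\Omega_n)| = |A|/n$ and the normalization $Z = nZ'$ exact. An equivalent, slightly slicker route is to let $\Phi: B_{n-1,n} \to \sigma(\Omega_n)$ be the cycle-lemma map, observe that $\P(\Phi(y)=z) = \tfrac{n}{Z}\prod_i w_{z_i}$ on full orbits so that the law of $\Phi(y)$ equals $\sigma_\ast \PP_n$, and then conclude from $N_s(\Phi(y)) = N_s(y)$ together with $n_s(\sigma^{-1}(z)) = N_s(z)$.
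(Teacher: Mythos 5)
Your proof is correct and follows exactly the route the paper indicates for this lemma (which it attributes to Kolchin and justifies only by the remark that cyclic shifts preserve both the weight $\prod_i w_{y_i}$ and the occupation profile, combined with Lemma \ref{lemmaCyclicShift}): you partition $B_{n-1,n}$ into cyclic orbits, use the cycle lemma to pick the unique tree representative in each, and match normalizations via $Z = nZ'$. The one detail you make explicit that the paper leaves implicit --- that every orbit has full size $n$ because $\gcd(n, n-1) = 1$ rules out periodic allocations --- is exactly right and is what makes the $n$-to-$1$ correspondence clean.
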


Now we turn to the proofs of Theorems. 

%

\begin{proof}[Proof of Theorem \ref{theoAlgorithm}]

Since $\sigma$ is a one-to-one correspondence, for the first claim of the theorem it is enough to show that the distribution of the random sequence $(\xi_1, \ldots, \xi_n)$ produced in the third step of the algorithm equals to the distribution of $\sigma(T)$ for $T \in \Omega_{k, n}$ distributed according to $\PP_{k, n}$. 

Let $v_i$, $i = 0, \ldots, n-1$, be the vertices of an ordered rooted tree $T\in \Omega_{k, n}$ listed in the DFS order. The probability of the tree is 
\bal{
p(T) \propto \prod_{i = 0}^{n-1} w_{d(v_i)}.
}
The probability of an allocation $y = \sigma(T) = (y_1, \ldots, y_n) \in B(n - 1, n)$ is 
\begin{equation}
\label{distrOnImageSigma}
p(y) \propto \prod_{j = 1}^n w_{y_j},
\end{equation}
and the allocations are different for different $T$ by the injectivity of map $\sigma$.
If $\tau_i$ denotes a shift of allocation $y$ by $i$ positions then for all $y$ in the image of $\sigma$, the allocations $\tau_i(y)$ are different for $i = 0, \ldots, n - 1$ by the Cyclic Lemma \ref{lemmaCyclicShift} and their union covers the set 
\bal{
B_k(n - 1, n) = \{y: y \in B(n - 1, n), \, N_0(y) = k\} .
} 

Hence the distribution (\ref{distrOnImageSigma}) on the allocations in $\sigma(\Omega_{k, n})$ can be obtained provided that one can generate allocations in $B_k(n - 1, n)$ with distribution 
\begin{eqnarray}
\label{distrOnAllAllocations}
p_k(y) &\propto \prod_{i = 1}^n w_{y_j} \propto w_0^k \prod_{j = 1}^\infty w_j^{N_j(y)}
\end{eqnarray}

(The difference of (\ref{distrOnImageSigma}) and
(\ref{distrOnAllAllocations}) is that the latter is over all allocations in $B_k(n - 1, n)$ while the former is only over the allocations in $\sigma(\Omega_{k, n}) \subset B_k(n - 1, n)$. )

The distribution (\ref{distrOnAllAllocations}) coincides with the distribution of random variables $\xi_1, \ldots, \xi_n$ which are i.i.d random variables $\zeta_1, \ldots, \zeta_n$ with distribution $w_0, w_1, \ldots $ conditional that $\sum_{i = 1}^n \zeta_i = n - 1$ and that $k$ of  variables $\zeta_i$ take value $0$.  

By exchangeability of random variables $(\xi_1, \ldots, \xi_n)$, all locations for those random variables $\xi_i$ that take value $0$ are equally probable. Conditional on the choice of these locations, the remaining variables, which we denote $\xi' = (\xi'_1, \ldots, \xi'_{n - k})$, have the joint distribution  
\bal{
p(\xi') \propto \prod_{j = 1}^\infty w_j^{N_j(\xi')} \text{ for } \xi' : \sum_{i = 1}^{n - k} \xi'_i = n - 1. 
}
 
Note that the distribution of $\xi'_1, \ldots, \xi'_{n-k}$ is the same as the probability distribution of an i.i.d sequence $\zeta'_1, \ldots, \zeta'_{n-k}$, with   $\zeta'_1$ having the distribution $\wh w_0 = 0, \wh{w}_i = w_i / (1 - w_0)$ conditional that  $\sum_{i = 1}^{n - k} \zeta'_i = n - 1$. By exponential re-weighting, which is possible by Condition (\ref{basicCondition}), this distribution equals the probability distribution of an i.i.d sequence $\hat\zeta_1,  \ldots, \hat \zeta_{n-k}$ with $\hat \zeta_1$ having distribution $\wh w_0^\ast = 0, \hat w_i^\ast = w_i^\ast/(1 - \alpha)$ and conditioned to have the sum $\sum_{i = 1}^{n - k} \hat\zeta_i = n - 1$. In other words, the random variables  $(\xi'_1, \ldots, \xi'_{n - k})$ have the distribution given in (\ref{distrXiPrime}). This proves the first part of the theorem and shows the validity of the algorithm: once we generated random variables  $ (\xi'_1, \ldots, \xi'_{n - k})$, we reverse the steps of the above argument to generate the corresponding random tree. 

In remains to estimate the speed of the algorithm. The main step is to generate $ (\xi'_1, \ldots, \xi'_{n - k})$, or, which is the same, a sequence $(\hat \zeta_1, \ldots, \hat \zeta_{n - k})$  Note that for $\hat \zeta_i$ we have $\E S_{\hat \zeta}(n - k)= (n - k)/(1 - \alpha)\sim n$.)

A fast method for generating such sequences was developed by Devroye \cite{devroye2012}. In application to our situation, the method is as follows. 
\begin{enumerate}
\item Generate a multinomial random vector $(N_1, N_2, \ldots, )$ with parameters $(n - k, \wh w_1^\ast, \wh w_2^\ast, \ldots )$. Repeat this step until a sample is obtained such that 
\bal{
\sum_{j = 0}^\infty j N_j = n - 1. 
}
\item Form an array of length $n - k$ with $N_j$ values $j$ and randomly permute it. The result is the desired sequence $\xi'_1, \ldots, \xi'_{n - k}$.  
\end{enumerate}
Let us analyze the running time of this algorithm. 

By Theorem 1 in \cite{devroye2012}, there is a method to generate the required multinomial vector in expected time $\tau_n = o(n^{1/2})$ provided that the sequence $\wh w_i^\ast$ has finite variance. (In fact, for a compactly supported distribution, this time is $O(1)$.)

The distribution of the sum $\sum_{j = 0}^\infty j N_j$ is the same as the distribution of the sum of an i.i.d. sequence $\hat \zeta_1, \ldots, \hat \zeta_{n - k}$ where each r.v. has pmf $\{\wh w_i^\ast\}$.  The expectation of the pmf $\{\wh w_i^\ast\}$ is $1/(1-\alpha)$ by its definition (\ref{defiHatShifted}), hence, the expectation of the sum is $(n - k)/(1 - \alpha) = n$. By using the local central limit theorem (valid under the assumption that the variance of pmf $\wh w_i^\ast$ is finite), the probability that $\sum_{i = 1}^{n - k} \hat \zeta_i = n - 1$ is bounded from below by $\Omega(1/\sqrt{n})$. Therefore, the expected time that we draw a sample $N_j$ such that   $\sum_{j = 0}^\infty j N_j = n - 1$ is $\tau_n \sqrt{n} = o(n)$. 

The time for other operations (adding zeros and random shuffling) is linear in $n$. 
\end{proof}

%

\begin{proof}[Proof of Theorem \ref{theoConvergenceToExcursion}]
The proof consists of several steps. In the first step, we will show that the \L ukasiewicz path of a random tree in $\Omega_{k, n}$ converges to a Brownian excursion.
This will be accomplished in Lemmas \ref{lemmaSecondBridge} and \ref{lemmaLukasToExcursion} by analyzing random sequences produced by Algorithm A. 

In the second step of the proof, we are going to show that the \L ukasiewicz path of a random tree from $\Omega_{k,n}$ is close  to the height process of the tree. This result is formulated in Lemma \ref{lemmaSandH}. 

In the third step, we will show that with high probability the height process is  close to the contour process. See Lemma \ref{lemmaHandC}.

The final step is to bring all these results together and show that the contour process for a tree in $\Omega_{k, n}$ converges to the Brownian excursion. 

\textbf{In all lemmas in this section   (Lemmas \ref{lemma_xi_moments} - \ref{lemmaSandC}), we assume that the assumptions of Theorem \ref{theoConvergenceToExcursion} are satisfied. That is, condition (\ref{basicCondition}) holds, and the sequence of pairs $(k, n)$ is such that $(k, n) \in PP(w)$ and, as $n \to \infty$,  $k =  \alpha n + O(1)$, where $\alpha \in (0, 1 - 1/\wh\nu)$.}

\bigskip
%

\textbf{Step 1: \L ukasiewicz path converges to Brownian excursion.}

Let $\xi''_1, \ldots, \xi''_n$ be the sequence of random variables produced by Step 2 of Algorithm A. Some properties of these random variables are proved in Lemmas \ref{lemma_xi_moments} and \ref{lemma_xi2_sum}. Then, we prove the convergence of \L ukasiewicz path to Brownian excursion in Lemmas \ref{lemmaSecondBridge} and \ref{lemmaLukasToExcursion}.

\begin{lemma} 
\label{lemma_xi_moments}
 As $n\to \infty$, we have:
\begin{enumerate}[(a)]
\item For each integer $s \geq 0$,
$\E((\xi''_1)^s) \to \E((L^\ast)^s)$, where $L_\ast$ is a random variable with distribution $w_j^\ast$.   In particular, $\Var(\xi''_1) \to (\sigma^\ast)^2$;
\item For each integer pair $s, t \geq 0$,  $\Cov\Big((\xi''_1)^s, (\xi''_2)^t\Big) \to 0$.
\item For every $\beta > 0$, there exists a $\gamma > 0$ such that 
\bal{
\P(\max_{j} |\xi''_j |> n^\beta) \leq e^{-\gamma n^\beta}.
}
\end{enumerate}
\end{lemma}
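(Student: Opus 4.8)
The plan is to reduce all three claims to properties of the i.i.d.\ sequence underlying the $\xi'_i$, and then to remove the conditioning in (\ref{distrXiPrime}) by means of a local limit theorem. First I would use that, by Step~2 of Algorithm~A, the sequence $(\xi''_1,\dots,\xi''_n)$ is a uniform permutation of $\xi'_1,\dots,\xi'_{n-k}$ together with $k$ additional zeros, and is therefore exchangeable. Since $0^s=0$ for $s\ge 1$, only the nonzero entries contribute to power sums, so exchangeability gives
\begin{equation*}
\E\big((\xi''_1)^s\big)=\frac1n\,\E\Big(\sum_{i=1}^n(\xi''_i)^s\Big)=\frac1n\,\E\Big(\sum_{i=1}^{n-k}(\xi'_i)^s\Big)=\frac{n-k}{n}\,\E\big((\xi'_1)^s\big),
\end{equation*}
and likewise, for $s,t\ge1$,
\begin{equation*}
\E\big((\xi''_1)^s(\xi''_2)^t\big)=\frac{(n-k)(n-k-1)}{n(n-1)}\,\E\big((\xi'_1)^s(\xi'_2)^t\big).
\end{equation*}
As $k=\alpha n+O(1)$, the prefactors tend to $1-\alpha$ and $(1-\alpha)^2$. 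The cases $s=0$ or $t=0$ are trivial, so (a) and (b) will follow once I show $\E((\xi'_1)^s)\to\E((L^\ast)^s)/(1-\alpha)$ and that $\E((\xi'_1)^s(\xi'_2)^t)\to\E((L^\ast)^s)\E((L^\ast)^t)/(1-\alpha)^2$.

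Next I would remove the conditioning. Since $(\xi'_1,\dots,\xi'_{n-k})$ is distributed as i.i.d.\ variables $\hat\zeta_1,\dots,\hat\zeta_{n-k}\sim\wh w^\ast$ conditioned on $S_{n-k}:=\sum_{i}\hat\zeta_i=n-1$, expanding the conditional moment gives
\begin{equation*}
\E\big((\xi'_1)^s\big)=\sum_{j\ge1} j^s\,\wh w^\ast_j\,\frac{\P(S_{n-k-1}=n-1-j)}{\P(S_{n-k}=n-1)}.
\end{equation*}
Because $\wh w^\ast$ has mean $1/(1-\alpha)$, the target $n-1$ lies within $O(1)$ of the mean of $S_{n-k}$, and the local limit theorem (the variance of $\wh w^\ast$ being finite) supplies two-sided bounds $c/\sqrt n\le\P(S_m=x)\le C/\sqrt n$ for $m$ of order $n$; each ratio above is then bounded by $C/c$ uniformly in $j$ and $n$, and converges to $1$ for each fixed $j$. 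The tail of $\wh w^\ast$ decays geometrically --- since $t^\ast<\rho$ by Lemma~\ref{lemmaExistence} forces $w_j(t^\ast)^j$ to decay geometrically --- so $j^s\wh w^\ast_j$ is summable and dominated convergence yields the limit $\sum_{j\ge1}j^s\wh w^\ast_j=\E((L^\ast)^s)/(1-\alpha)$. The two-coordinate moment goes the same way, with $S_{n-k-2}=n-1-j_1-j_2$ in the numerator and the limiting double sum factoring, which gives (b); in particular $\Var(\xi''_1)\to\E((L^\ast)^2)-1=(\sigma^\ast)^2$ since $\E L^\ast=1$.

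For (c) I would argue directly. As every entry is nonnegative, $\max_j\xi''_j=\max_i\xi'_i$, so a union bound gives $\P(\max_j\xi''_j>n^\beta)\le(n-k)\,\P(\xi'_1>n^\beta)$. Bounding the conditional probability crudely by $\P(\hat\zeta_1>n^\beta)/\P(S_{n-k}=n-1)$, using the geometric tail $\P(\hat\zeta_1>n^\beta)\le Ce^{-a'n^\beta}$ together with $\P(S_{n-k}=n-1)\ge c/\sqrt n$, I obtain $\P(\xi'_1>n^\beta)\le C'\sqrt n\,e^{-a'n^\beta}$, and hence $\P(\max_j\xi''_j>n^\beta)\le C''n^{3/2}e^{-a'n^\beta}$; the polynomial factor is absorbed into the exponent for any $\gamma<a'$ and $n$ large.

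The main obstacle is the dominated-convergence step used to remove the conditioning: it requires a bound on the local-limit ratios $\P(S_{m-1}=x-j)/\P(S_m=x)$ that is uniform in $n$ and summable against $j^s\wh w^\ast_j$. The right tool is the uniform upper bound $\P(S_m=\cdot)\le C/\sqrt n$ paired with the lower bound $\P(S_m=n-1)\ge c/\sqrt n$ at the typical value $n-1$, which together collapse the ratio to a constant, after which the geometric decay of $\wh w^\ast$ controls the sum. Supplying this local limit theorem with the stated uniform bounds (and correctly handling the lattice span of the possibly periodic $\wh w^\ast$) is exactly the auxiliary result I would defer to the Appendix.
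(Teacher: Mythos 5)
Your argument is correct, and for parts (a) and (b) it coincides in substance with the paper's: the paper encodes your exchangeability computation as a mixture identity ($\xi''_1=0$ with probability $k/n$ and $\xi''_1$ distributed as $\xi'_1$ with probability $1-k/n$), writes the same factorization $\P(\xi'_1=j)=\P(\hat\zeta_1=j)\,\P\big(S_{\hat\zeta}(n-k-1)=n-1-j\big)/\P\big(S_{\hat\zeta}(n-k)=n-1\big)$, and controls the ratio via Kolchin's local limit theorem (uniformly close to $1$ for $j<n^{1/4}$, uniformly bounded elsewhere) together with the exponential tail of $L^\ast$; your dominated-convergence packaging, with the two-sided bound coming from a uniform concentration estimate above and the local-limit lower bound at the typical value below, is an equivalent route to the same two facts. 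The genuine divergence is in part (c). The paper proceeds via the exact identity $\P(\max_i\xi'_i\le n^\beta)=\big[\P(\hat\zeta_1\le n^\beta)\big]^{n-k}\,\P\big(S_{\hat\zeta^{(n)}}(n-k)=n-1\big)/\P\big(S_{\hat\zeta}(n-k)=n-1\big)$ with truncated variables $\hat\zeta^{(n)}$, and controlling the numerator requires a local limit theorem for a triangular array of $n$-dependent truncated distributions --- precisely the auxiliary theorem the paper proves in its Appendix. Your union bound $\P(\max_j\xi''_j>n^\beta)\le(n-k)\,\P(\hat\zeta_1>n^\beta)/\P\big(S_{\hat\zeta}(n-k)=n-1\big)=O\big(n^{3/2}e^{-a'n^\beta}\big)$ sidesteps the truncation identity entirely and uses only the standard untruncated local-limit lower bound already needed in (a); this makes the Appendix theorem unnecessary for this lemma and is a real simplification, since the polynomial prefactor is absorbed into $e^{-\gamma n^\beta}$ for large $n$, which is all that the downstream uses of the lemma (Lemmas \ref{lemmaSecondBridge}, \ref{lemmaHandC}, \ref{lemmaSandC}) require. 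What the paper's heavier route buys is a two-sided asymptotic for the truncated walk, but nothing in the present application needs it; your only obligations beyond the paper's existing citations are the ones you already flag, namely compatibility of the lattice span with the target value (guaranteed since $(k,n)\in PP(w)$) and the uniform upper bound on point probabilities, for which Theorem 2.21 in \cite{petrov95} --- already invoked in the proof of Lemma \ref{lemma_uncond2cond} --- suffices.
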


\begin{proof}
(a)
First, consider variables $\xi'_i$. They have the distribution:
\bal{
\P\Big(\xi'_1 = j_1, \ldots, \xi'_{n - k} = j_{n - k}\Big) = \P\Big(\hat\zeta_1 = j_1, \ldots, \hat\zeta_{n - k} = j_{n - k}\Big| S_{\hat \zeta} (n - k) = n - 1\Big), 
}
where $\hat\zeta_1, \ldots, \hat\zeta_{n - k}$ are i.i.d. random variables with distribution $\P(\hat\zeta_1 = j) = \wh w^{\ast}_j = w^{\ast}_j/ (1- \alpha)$, if $j \geq 1$, and $0$ if $j = 0$. Here  $S_{\hat \zeta} (p)$ denotes  $\sum_{i = 1}^p \hat\zeta_i$. 
Therefore, 
\bal{
\P(\xi'_1 = j) &= \P\Big(\hat\zeta_1 = j \Big |  S_{\hat \zeta} (n - k) = n - 1\Big) 
=\frac{\P\Big(\hat\zeta_1 = j, \sum_{i=2}^{n - k} \hat\zeta_i = n - 1 - j\Big ) }
{\P\Big(S_{\hat \zeta} (n - k) = n - 1\Big) } 
\\
&= \P(\hat\zeta_1 = j)\frac{\P\Big(S_{\hat \zeta} (n - k - 1) = n - 1 - j\Big)}
{\P\Big(S_{\hat \zeta} (n - k) = n - 1\Big) }.
}

The distribution of the random variable $\xi''_1$ can be obtained as a mix distribution: $\xi''_1 = 0$ with probability $(k/n)$ and $\xi''_1=\xi'$ with probability $(1 - k/n)$, where $\xi'$ has the same distribution as $\xi'_1$. Also, the distribution $L^\ast$ is a mix of the trivial distribution with probability $\alpha$ and the distribution of $\hat \zeta$ with probability $(1 - \alpha)$. So, for $j > 0$, 

\bal{
\P(\xi''_1 = j) &= (1 - k/n) \P(\xi'_1 = j),
\\
\P(L^\ast = j)& = (1 - \alpha) \P(\hat \zeta_1 = j),
}
and therefore,
\bal{
\P(\xi''_1 = j) =  \P(L^\ast = j) \frac{1 - k/n}{1 - \alpha}\frac{\P\Big(S_{\hat \zeta} (n - k - 1) = n - 1 - j\Big)}
{\P\Big(S_{\hat \zeta} (n - k) = n - 1\Big) }.
}

Next, both $k$ and $n$ grow to infinity in such a way that $k/n \to \alpha\in(0, 1)$, hence $\E S_{\hat \zeta} (n - k) = (n - k) \E\hat \zeta_1 = n (1 - k/n) / (1 - \alpha) = n + o(1)$ and similarly $\E S_{\hat \zeta} (n - k - 1) = n + o(1)$. We apply the local limit law (Theorem I.4.2 in Kolchin \cite{kolchin84}) and find that  for any $\eps > 0$ we can find $n(\eps)$ such that for all $n > n(\eps)$ and all $j < n^{1/4}$, 
\bal{
\bigg|\frac{1 - k/n}{1 - \alpha}\frac{\P\Big(S_{\hat \zeta} (n - k - 1) = n - 1 - j\Big)}
{\P\Big(S_{\hat \zeta} (n - k) = n - 1\Big) } - 1\bigg|< \eps.
}
In addition, this ratio of probabilities is bounded uniformly in $j$ and $n$. 

Then, for arbitrary $\eps > 0$ and $n > n(\eps)$, and for some $c > 0$. 
\bal{
\bigg|\E\Big[ (\xi''_1)^s\Big] -  \E\Big[ (L^\ast)^s\Big] \bigg|
= \eps \sum_{j = 0}^{\lfloor n^{1/4}\rfloor} j^s\, \P(L^\ast = j) + c\sum_{j ={\lfloor n^{1/4}\rfloor} + 1}^{\infty} j^s\, \P(L^\ast = j).
}

By assumption, the probability distribution of $L$ has exponentially declining tails, which implies that $L^\ast$ has exponentially declining tails, so the last term is bounded by
\bal{
c\sum_{j ={\lfloor n^{1/4}\rfloor} + 1}^{\infty} j^s\, e^{-a j} \to 0,
}
as $n\to \infty$, 
and we conclude that all moments of random variable $\xi''_1$ converge to the corresponding moments of $L^\ast$, as $n \to \infty$. In particular,  $\Var(\xi''_1) \to (\sigma^\ast)^2$.


(b) By a similar argument, we can write for $j_1 > 0, j_2 > 0$,
\bal{
\P(\xi''_1 = j_1, \xi''_2 = j_2) &=  \P(L_1^\ast = j_1, L_2^\ast = j_2) \Big(\frac{1 - k/n}{1 - \alpha}\Big)^2
\\
&\times \frac{\P\Big(S_{\hat \zeta} (n - k - 2) = n - 1 - j_1 - j_2\Big)}
{\P\Big(S_{\hat \zeta} (n - k) = n - 1\Big) },
}
where $L_1^\ast$, $L_2^\ast$ are two independent random variables that have the $\alpha$-shifted distribution. By using the local limit law, we find that the ratio of probabilities approaches $1$ as $n \to \infty$, uniformly for $j_1 < n^{1/4}, j_2 < n^{1/4}$. In addition, this ratio is bounded uniformly in $n, j_1, j_2$. Together with assumption (1), this ensures that  $\E\Big((\xi''_1)^s (\xi''_2)^t\Big) \to \E\Big((L^\ast_1)^s (L^\ast_2)^t\Big)$ as $n \to \infty$, for all integer $s\geq 0, t\geq 0$. In particular, this implies that $\Cov\Big((\xi''_1)^s, (\xi''_2)^t\Big) \to 0$, as $n \to \infty$.

(c)  We have 
\bal{
\P&(\max_i\xi'_i \leq n^\beta) = \P\Big(\xi'_{1} \leq n^\beta, \ldots, \xi'_{n - k}\leq n^\beta\Big)  
\\
&= \P\Big(\hat\zeta_{1} \leq n^\beta, \ldots, \hat\zeta_{n - k}\leq n^\beta \Big| S_{\hat\zeta}(n - k) = n - 1\Big) 
\\ 
&=\Big[\P(\hat\zeta_{1} \leq n^\beta)\Big]^{n - k} 
\frac{\P\Big(S_{\hat\zeta}(n - k) = n - 1)\Big| \hat\zeta_{1} \leq n^\beta, \ldots, \hat\zeta_{n - k}\leq n^\beta\Big) }{\P(S_{\hat\zeta}(n - k) = n - 1)}.
}
Define truncated random variables $\hat\zeta_i^{(n)}$, $i = 1, \ldots, n - k$ as i.i.d random variables with distribution 
\bal{
\P(\hat\zeta_1^{(n)} = j) = \P\Big(\hat\zeta_1 = j \Big| \hat\zeta_1 \leq n^\beta\Big).
} 
Then we can write:
\begin{equation}
\label{estimate_on_max}
\P(\max_i\xi'_i \leq n^\beta) = \Big[\P(\hat\zeta_{1} \leq n^\beta)\Big]^{n - k} 
\frac{\P\Big(S_{\hat\zeta^{(n)}}(n - k) = n - 1)\Big)}{\P\Big(S_{\hat\zeta}(n - k) = n - 1\Big)}.
\end{equation}
By the assumption on the offspring probabilities, $\wh\zeta_1$ has exponentially declining tails and therefore, for some $a > 0$, we can write:
\bal{
(n - k)\log \P(\hat\zeta_{1} \leq n^\beta) \geq (n - k) \log (1 - e^{-a n^\beta}) \geq -2 n e^{-a n^\beta},
}
where the last inequality follows because by a choice of $a$ we can ensure $e^{-a n^\beta} < 3/4$ and we have $\log(1 - x) \geq -2x$ for $0 \leq x \leq 3/4$. Then,  
\bal{
\Big[\P(\hat\zeta_{1} \leq n^\beta)\Big]^{n - k} &\geq e^{-2n e^{-a n^\beta}} \geq 1 -2n e^{-a n^\beta}.
}
This implies that for sufficiently small $\gamma> 0$, 
 $\big[\P(\hat\zeta_{1} \leq n^\beta)\big]^{n - k} \geq 1 - e^{-\gamma n^\beta}$. 
 
 To estimate the denominator in the ratio of probabilities in (\ref{estimate_on_max}), we can use the local limit theorem (Theorem I.4.2 in Kolchin \cite{kolchin84}). For the numerator, this theorem is not applicable directly, since the distribution of random variables $\hat\zeta_i^{(n)}$ is changing with $n$. However, it can be adapted to this more general setting along the lines in the proof of Theorem I.4.2 in Kolchin. (We provide the proof of the modified local limit theorem in ``Appendix''.)
 Then, we find that, for $r > 0$, 
\bal{
\frac{\P\Big(S_{\hat\zeta^{(n)}}(n - k) = n - 1)\Big)}{\P\Big(S_{\hat\zeta}(n - k) = n - 1\Big)} \to 1
}
as $n \to \infty$. It follows that for some smaller $\gamma>0$, $\P(\max_i\xi'_i \leq n^\beta) \geq 1 - e^{-\gamma n^\beta}$. In a similar fashion we can show that $\P(\min_i \xi'_i \geq - n^\beta) \geq 1 - e^{-\gamma n^\beta}$. The sequence $\xi''_{(1)}, \ldots, \xi''_{(n)}$ is different from the sequence $\xi'_{(1)}, \ldots, \xi'_{(n-k)}$ only by addition of $k$ zeros. It follows that some $\gamma > 0$,
$\P(\max_{i} |\xi''_i | \leq n^\beta) \geq 1 - e^{-\gamma n^\beta}$, which implies the claim of the lemma.  

\end{proof}

\begin{lemma} 
\label{lemma_xi2_sum}
As $n \to \infty$,
\bal{
\frac{1}{n}\sum_{i = 1}^n \Big(\xi''_i -  1 + \frac{1}{n}\Big)^2 \pcv (\sigma^\ast)^2.
}
\end{lemma}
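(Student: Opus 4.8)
The plan is to reduce the claim to a weak law of large numbers for the squares of the exchangeable sequence $(\xi''_i)$, treating the random cross term by exploiting a deterministic constraint. First I would use the fact that $\sum_{i=1}^n \xi''_i = n - 1$ for \emph{every} realization: the sequence $(\xi''_i)$ is obtained by appending $k$ zeros to $(\xi'_i)$, whose entries sum to $n - 1$, and a permutation does not change the sum. Writing $a_n := 1 - 1/n$ and expanding the square, the linear term $-2 a_n \, \frac1n \sum_i \xi''_i$ becomes the deterministic quantity $-2 a_n^2$, so the expression collapses to
\begin{equation*}
\frac{1}{n}\sum_{i=1}^n\Big(\xi''_i - 1 + \tfrac1n\Big)^2 = \frac{1}{n}\sum_{i=1}^n (\xi''_i)^2 - \Big(1 - \tfrac1n\Big)^2.
\end{equation*}
Since $(1 - 1/n)^2 \to 1$, it then suffices to show $\frac{1}{n}\sum_i (\xi''_i)^2 \pcv \E\big((L^\ast)^2\big) = (\sigma^\ast)^2 + 1$, where the last equality uses $\E L^\ast = 1$ and $\Var(L^\ast) = (\sigma^\ast)^2$.

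To prove this, I would apply Chebyshev's inequality to $Z_n := \frac{1}{n}\sum_i (\xi''_i)^2$. The mean is handled directly by Lemma \ref{lemma_xi_moments}(a) with $s = 2$, giving $\E Z_n = \E\big((\xi''_1)^2\big) \to \E\big((L^\ast)^2\big)$. For the variance, exchangeability of $(\xi''_i)$ gives
\begin{equation*}
\Var(Z_n) = \frac{1}{n}\Var\big((\xi''_1)^2\big) + \frac{n-1}{n}\,\Cov\big((\xi''_1)^2, (\xi''_2)^2\big).
\end{equation*}
The first summand vanishes because $\Var\big((\xi''_1)^2\big) = \E\big((\xi''_1)^4\big) - \big(\E (\xi''_1)^2\big)^2$ stays bounded: by Lemma \ref{lemma_xi_moments}(a) with $s = 4$ it converges to $\E\big((L^\ast)^4\big)$, which is finite since $L^\ast$ inherits the exponential tails of $L$. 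The second summand vanishes because $\Cov\big((\xi''_1)^2, (\xi''_2)^2\big) \to 0$ by Lemma \ref{lemma_xi_moments}(b) with $s = t = 2$, while $(n-1)/n \to 1$. Hence $\Var(Z_n) \to 0$.

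Combining $\E Z_n \to \E\big((L^\ast)^2\big)$ with $\Var(Z_n) \to 0$, Chebyshev's inequality yields $Z_n \pcv \E\big((L^\ast)^2\big)$, and therefore the full expression converges in probability to $(\sigma^\ast)^2 + 1 - 1 = (\sigma^\ast)^2$, as required. I do not anticipate a serious obstacle: the two potentially delicate points — boundedness of the fourth moment of $\xi''_1$ and the asymptotic decorrelation needed to kill the covariance term — are exactly the content of Lemma \ref{lemma_xi_moments}, which is already established. The only structural idea in the argument is the observation that the deterministic sum constraint eliminates the random linear term and reduces the whole problem to a one-dimensional second-moment estimate amenable to Chebyshev.
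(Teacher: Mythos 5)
Your proof is correct and follows essentially the same route as the paper's: a Chebyshev second-moment argument in which the mean converges by Lemma \ref{lemma_xi_moments}(a) and the variance vanishes because, by exchangeability, it splits into an $O(1/n)$ single-variable term and a covariance term killed by Lemma \ref{lemma_xi_moments}(b). Your one deviation — using the deterministic constraint $\sum_{i=1}^n \xi''_i = n-1$ to eliminate the centering first, so that only the case $s=t=2$ of the covariance estimate is needed instead of the centered squares the paper handles directly — is a valid cosmetic simplification, not a different method.
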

\begin{proof}
We have $\E\xi''_i = 1 - 1/n$ and  $\E\big[ (\xi''_i -  1 + \frac{1}{n})^2\big] \to (\sigma^\ast)^2$ by Lemma \ref{lemma_xi_moments}(a).
Then, 
\bal{
\Var&\Big(\frac{1}{n}\sum_{i = 1}^n \big(\xi''_i -  1 + \frac{1}{n}\big)^2\Big) 
= \frac{1}{n^2} \Big[\sum_{i = 1}^n\Var \Big(\big(\xi''_i -  1 + \frac{1}{n}\big)^2\Big) 
\\
&+ 2 \sum_{i < j}\Cov \Big(\big(\xi''_i -  1 + \frac{1}{n}\big)^2, \big(\xi''_j -  1 + \frac{1}{n}\big)^2\Big)\Big]
\\
& = \frac{1}{n^2} \Big[n \Var \Big(\big(\xi''_1 -  1 + \frac{1}{n}\big)^2\Big) + n(n - 1) 
\Cov \Big(\big(\xi''_1 -  1 + \frac{1}{n}\big)^2, \big(\xi''_2 -  1 + \frac{1}{n}\big)^2\Big)
\Big]
\\& \to 0,
}
by Lemma \ref{lemma_xi_moments}(a) and (b). By Chebyshev's inequality, this implies the statement of the lemma.
\end{proof}

  Let $ S''_n(0) = 0$ and 
\bal{
S''_n(j) = \sum_{i = 1}^j \xi''_i, \text{ where } j = 1, \ldots, n,
}
and extend $S''_n(t)$ to all $t \in [0, n]$ by linear interpolation. 

In the lemmas below, the convergence of stochastic processes is the weak convergence with respect to the space of continuous functions on $[0, 1]$ with the topology induced by the uniform norm: $\|f\|_{\infty} = \sup_{t \in [0,1]} f(t)$.

\begin{lemma}
\label{lemmaSecondBridge}
As $n \to \infty$
\bal{
\frac{S''_n(tn) - tn}{\sigma^\ast \sqrt{n}}\Big|_{t \in [0,1]} \xrightarrow{weakly}  b(t)\Big|_{t \in [0,1]}, 
}
 where $b(t)$ is the standard Brownian bridge on the interval $[0, 1]$, and $\sigma^\ast$ is as defined in (\ref{variance}). 
\end{lemma}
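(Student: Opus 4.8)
The plan is to exploit the fact that the sequence $(\xi''_i)_{i=1}^n$ produced by Step 2 of Algorithm A is \emph{exchangeable} and has the deterministic total sum $\sum_{i=1}^n \xi''_i = n-1$ (the $\xi'_i$ sum to $n-1$ and we append $k$ zeros). Consequently its centered partial-sum process is pinned at both endpoints, which is exactly the behaviour of a Brownian bridge rather than a Brownian motion. I would therefore invoke the functional central limit theorem for exchangeable triangular arrays from Billingsley \cite{billingsley1968}: if the row variables are exchangeable with vanishing total sum, if the normalized sum of squares converges in probability to a positive constant $\sigma^2$, and if the maximal variable is $o(\sqrt n)$ in probability, then the rescaled partial-sum process converges weakly to $\sigma$ times a standard Brownian bridge. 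Lemmas \ref{lemma_xi_moments} and \ref{lemma_xi2_sum} were set up precisely to supply these two hypotheses.

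First I would reduce to centered variables. Since $\E \xi''_i = (n-1)/n = 1 - 1/n$, set $Y_{n,i} := \xi''_i - (1 - 1/n)$; the $Y_{n,i}$ remain exchangeable and now satisfy $\sum_{i=1}^n Y_{n,i} = 0$. A direct computation at the lattice points gives
\[
S''_n(tn) - tn = \sum_{i=1}^{\lfloor tn \rfloor} (\xi''_i - 1) = \sum_{i=1}^{\lfloor tn \rfloor} Y_{n,i} - \frac{\lfloor tn\rfloor}{n},
\]
and linear interpolation extends this with a uniformly controlled error. Dividing by $\sigma^\ast \sqrt n$, the target process differs from $\big(\tfrac{1}{\sigma^\ast\sqrt n}\sum_{i=1}^{\lfloor tn\rfloor} Y_{n,i}\big)_{t\in[0,1]}$ by a deterministic term bounded in absolute value by $1/(\sigma^\ast \sqrt n)$, uniformly in $t$, which vanishes in the uniform norm. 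Hence it suffices to establish the weak convergence of the centered rescaled partial sums of $Y_{n,i}$ to $b(t)$.

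It then remains to verify the two hypotheses. The sum-of-squares condition is exactly Lemma \ref{lemma_xi2_sum}, which asserts $\tfrac{1}{n}\sum_{i=1}^n Y_{n,i}^2 \pcv (\sigma^\ast)^2$; moreover $(\sigma^\ast)^2 > 0$ by Lemma \ref{lemmaVarianceAlpha} since the $\alpha$-shifted law has $w_0^\ast = \alpha \in (0,1)$ and unit mean and is thus nondegenerate, so the limiting bridge is nontrivial. The negligibility of the maximum follows from Lemma \ref{lemma_xi_moments}(c): choosing any $\beta \in (0, 1/2)$ gives $\P(\max_i |\xi''_i| > n^\beta) \le e^{-\gamma n^\beta} \to 0$, and since $|Y_{n,i}| \le |\xi''_i| + 1$, we obtain $\tfrac{1}{\sqrt n}\max_i |Y_{n,i}| \pcv 0$. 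With both inputs in hand, the exchangeable functional CLT delivers the claimed weak convergence to the standard Brownian bridge.

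The main obstacle is the correct invocation of the exchangeable functional limit theorem and the faithful matching of its hypotheses. Unlike Donsker's theorem for i.i.d. partial sums, the constraint $\sum_i Y_{n,i}=0$ introduces the negative correlations that bend the Brownian-motion limit into a bridge, and care is needed to ensure that the theorem one cites genuinely produces the bridge. A secondary technical point is that the underlying statement typically separates convergence of the finite-dimensional distributions (proved by a central limit theorem for exchangeable, or equivalently sampling-without-replacement, sums) from tightness; however, the two conditions furnished by Lemmas \ref{lemma_xi2_sum} and \ref{lemma_xi_moments}(c) are exactly the inputs needed for both, so once the theorem is correctly stated the remaining verification is routine.
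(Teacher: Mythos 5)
Your proposal is correct and follows essentially the same route as the paper: both center via $\xi''_i - 1 + 1/n$, feed Lemma \ref{lemma_xi2_sum} (sum of squares $\pcv (\sigma^\ast)^2$) and Lemma \ref{lemma_xi_moments}(c) (negligible maximum) into Billingsley's functional CLT for exchangeable rows with zero sum (Theorem 24.2 in \cite{billingsley1968}), and conclude convergence to the Brownian bridge. Your only additions --- the explicit $O(1/(\sigma^\ast\sqrt n))$ bookkeeping for the deterministic drift term $\lfloor tn\rfloor/n$ and the interpolation error, and the remark that $(\sigma^\ast)^2>0$ --- are details the paper leaves implicit, and they are verified correctly.
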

\begin{proof}
Define 
$x_j = (\xi''_j - 1 + 1/n)/(\sqrt{n} \sigma^\ast)$
 and $X_n(t) = \sum_{j = 1}^{[tn]} x_j$, with $X_n(t) = 0$ for $0 \leq t < 1/n$. 
  Then, $\sum_{j = 1}^{n} x_j= 0$ and $\sum_{j = 1}^{n} x_j^2 \pcv  1$ by Lemma \ref{lemma_xi2_sum}. In addition, by Lemma \ref{lemma_xi_moments}(c), $\max_{j} |x_j| \pcv 0$. This means that the assumptions of Theorem 24.2 in Billingsley's book \cite{billingsley1968} are satisfied, and we have $X_n(t) \xrightarrow{weakly}  b(t)$. This implies the claim of the lemma. 
\end{proof}

Next, we consider the application of Step 3 in Algorithm A. By Theorem \ref{theoAlgorithm}, this step produces the \L ukasiewicz path of a random tree in $\Omega_{k, n}$. 

\begin{lemma}
\label{lemmaLukasToExcursion} 
As $n \to \infty$, the normalized \L ukasiewicz path $S_n(tn)/(\sigma^\ast \sqrt{n})$ of a random tree from $\Omega_{k, n}$ weakly converges to a standard Brownian excursion $e(t)$. 
\end{lemma}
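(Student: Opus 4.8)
The plan is to realize the \L ukasiewicz path $S_n$ as the \emph{discrete Vervaat transform} of the bridge walk from Lemma \ref{lemmaSecondBridge}, and then push the convergence through the (almost surely continuous) continuous-time Vervaat map. Set $W''(j) = S''_n(j) - j = \sum_{i=1}^j(\xi''_i - 1)$, so that $W''(0) = 0$ and $W''(n) = -1$, since $\sum_{i=1}^n \xi''_i = n-1$. Writing $\widetilde W_n(t)$ for the linearly interpolated rescaled walk $W''(\lfloor tn\rfloor)/(\sigma^\ast\sqrt n)$, Lemma \ref{lemmaSecondBridge} gives $\widetilde W_n \xrightarrow{weakly} b$, the standard Brownian bridge. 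By Theorem \ref{theoAlgorithm} and the discussion following Lemma \ref{lemmaCyclicShift}, Step 3 of Algorithm A cyclically shifts $(\xi''_i)$ at the first time $\hat\jmath$ at which $W''$ attains its minimum, and the resulting \L ukasiewicz walk satisfies $S_n(j) = W''(\hat\jmath + j) - W''(\hat\jmath)$ when $\hat\jmath + j \le n$, and $S_n(j) = W''(\hat\jmath + j - n) - W''(\hat\jmath) + W''(n)$ when $\hat\jmath + j > n$. After dividing by $\sigma^\ast\sqrt n$ the carried-over term $W''(n) = -1$ contributes $-1/(\sigma^\ast\sqrt n) \to 0$ uniformly, so the rescaled path $\widetilde S_n(t) := S_n(\lfloor tn\rfloor)/(\sigma^\ast\sqrt n)$ equals, up to a uniformly vanishing error, the Vervaat transform of $\widetilde W_n$.

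Next I would recall the continuous Vervaat map $V$, which sends a bridge $f \in C[0,1]$ with $f(0) = f(1) = 0$ to $V(f)(t) = f(u_f \oplus t) - f(u_f)$, where $u_f$ is the first location of the minimum of $f$ and $\oplus$ denotes addition modulo $1$; since $f(0) = f(1)$ the transformed path is continuous across the wrap-around. The three classical inputs are: the Brownian bridge $b$ attains its minimum at a unique point $u^\ast \in (0,1)$ almost surely; the map $V$ is continuous at every $f$ possessing a unique minimizer; and, by Vervaat's theorem \cite{vervaat79}, $V(b)$ has the law of the standard Brownian excursion $e$.

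To transfer the convergence I would invoke the Skorokhod representation theorem and assume $\widetilde W_n \to b$ uniformly almost surely. Uniqueness of $u^\ast$ together with uniform convergence forces the rescaled shift location $\hat u_n := \hat\jmath/n$ to satisfy $\hat u_n \to u^\ast$: fixing $\eps>0$ and setting $\delta = \inf_{|t - u^\ast|\ge\eps}\big(b(t) - b(u^\ast)\big) > 0$, once $\|\widetilde W_n - b\|_\infty < \delta/3$ every minimizer of $\widetilde W_n$ must lie within $\eps$ of $u^\ast$. Consequently $\widetilde S_n(t) = \widetilde W_n(\hat u_n \oplus t) - \widetilde W_n(\hat u_n) + o(1)$ converges uniformly in $t$ to $b(u^\ast \oplus t) - b(u^\ast) = V(b)(t)$, where uniformity of the composition uses the uniform continuity of the $1$-periodic extension of $b$. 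Thus $\widetilde S_n \to V(b)$ almost surely in the coupling, and therefore $\widetilde S_n \xrightarrow{weakly} e$, which is the assertion of the lemma.

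The main obstacle is precisely the passage from the discrete cyclic shift to the continuous Vervaat map, namely establishing $\hat u_n \to u^\ast$ and the uniform convergence of the composed paths $\widetilde W_n(\hat u_n \oplus \,\cdot\,)$; both rest on the almost sure uniqueness of the Brownian bridge minimum. The remaining steps, controlling the vanishing $-1$ endpoint and verifying continuity of $V$ across the wrap-around, are routine.
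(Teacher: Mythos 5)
Your proposal is correct and follows essentially the same route as the paper's own proof: convergence of the exchangeable-increment walk to the Brownian bridge (Lemma \ref{lemmaSecondBridge}), identification of the cyclic shift in Algorithm A with the discrete Vervaat transform, a Skorokhod coupling, and Vervaat's theorem together with the almost sure uniqueness of the bridge minimum. In fact you spell out two points the paper treats tersely --- the explicit $\delta$-argument showing the shift location converges to the bridge's minimizer, and the uniformly vanishing $W''(n)=-1$ wrap-around term --- so the proposal is, if anything, more complete.
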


\begin{proof}
Let $b''_n(t) = (S''_n(tn) - tn)/(\sigma^\ast \sqrt{n})$. By the previous lemma, $b''_n(t)$ converges to $b(t)$ in distribution. According to Vervaat's Theorem (Theorem 1 in \cite{vervaat79}), the location of the absolute minimum of Brownian bridge $b(t)$ is unique with probability $1$, and, if this time is denoted $\tau_m$, the process $b(t + \tau_m) - b(\tau_m)$ is distributed as a Brownian excursion $e(t)$. (Here, the addition in the argument is modulo $1$.)

Since $b''_n(t)$ converges to the Brownian bridge $b(t)$ in distribution, we can couple the random variables $ b''_n(t)$ and  $b(t)$, so that the time at which $b''_n(t)$ achieves its first minimum, $\tau''_m$, converges in probability to $\tau_m$. Then, 
\bal{
b''_n(t + \tau''_m) - b''_n(\tau''_m)\Big|_{t \in [0, 1]} \xrightarrow{dist} b(t + \tau_m) - b(\tau_m)\Big|_{t \in [0, 1]},
}
which is distributed as a Brownian excursion by Vervaat's theorem. 

If we write $S_n(j) = \sum_{i = 1}^{j}(\wh\xi''_i - 1)$, where $(\wh\xi''_i)$ is the sequence $(\xi''_i)$ after the cyclic shift, then the process $b''_n(t + \tau''_m) - b''_n(\tau''_m)$ is equal to 
$S_n(t)/(\sigma^\ast \sqrt{n})$. By Theorem \ref{theoAlgorithm}, $S_n(t)$ is distributed as the \L ukasiewicz path of a random tree from $\Omega_{k, n}$. 

Hence, we have shown that the normalized \L ukasiewicz path $S_n(tn)/(\sigma^\ast \sqrt{n})$ of a random tree from $\Omega_{k, n}$ converges to a standard Brownian excursion $e(t)$. 
\end{proof}

%

\textbf{Step 2: \L ukasiewicz path is close to the height process.} 

The height process for an ordered tree is defined in (\ref{height_process}). In Lemma 1 of Marckert and Mokkadem\cite{marckert_mokkadem2003}, it was shown that if $S_n(j)$ is the \L ukasiewicz path of an ordered tree, then the height process $H_n(l)$, $l = 0, \ldots, n - 1$,  can be written as follows:
\begin{equation}
\label{rightMinima0}
H_n(l) = \#\Big\{i : 0\leq i \leq l - 1, S_n(i) = \min_{i \leq k \leq l} \big\{ S_n(k)\big\}\Big\} 
\end{equation}
In words, $H_n(l)$ is the number of (weak) right minima of $S_n$ on the interval $[0, l]$ (not counting the trivial right minimum at $i = l$).

For example, in Fig. \ref{figTree}, $H_8(3) = 2$, and this corresponds to the right minima at $i = 0$ and $i = 2$ of the \L ukasiewicz path $S_8(i)$ on interval $[0, 3]$. 

Now, let $W_n(j)$, $j = 0, \ldots, n$,  be a path of a random walk with independent increments $x_i$ that can take values $s = -1, 0, 1, \ldots$ with the following probabilities:
\begin{equation}
\label{stepDistribution}
\P(x_i = s) = w^\ast_{s + 1},   
\end{equation} 
where 
$\{w^\ast_k\}_{k \geq 0}$ is the $\alpha$-shifted weight distribution from Definition \ref{defiAlphaShift} . For a path of this random walk, we can also define the number of weak right minima on the interval $[0, l]$,
$ l = 0, \ldots, n - 1$: 
\begin{equation}
\label{rightMinima1}
RM_n(l) = \#\Big\{i : 0\leq i \leq l - 1, W_n(i) = \min_{i \leq k \leq l} \big\{ W_n(k)\big\}\Big\} 
\end{equation}

The key result about the process $RM_n(l)$ was proved in \cite{marckert_mokkadem2003}.
\begin{lemma}[Corollary 5 in \cite{marckert_mokkadem2003}]
\label{lemma_RM}
 Let $W_n(j)$, $j = 0, \ldots, n$,  be a path of a random walk with independent increments that have the law as in (\ref{stepDistribution}), and let $RM_n(l)$ be as defined in (\ref{rightMinima1}). Then, for some $\gamma > 0$ and $\nu > 0$, we have
\begin{equation} 
\label{RM_uncond}
\P\bigg(\sup_{0 \leq l < n} \Big| W_n(l) -  \min_{0 \leq i \leq l} W_n(i)
- RM_n(l) \frac{(\sigma^\ast)^2}{2}\Big| \geq n^{1/4 + \nu} \bigg) \leq e^{-\gamma n^\nu}.
\end{equation}
\end{lemma}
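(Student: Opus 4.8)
The plan is to reduce the statement to a concentration inequality for a ladder-height decomposition of the walk, and then to upgrade the fixed-$l$ bound to the uniform bound over $0 \le l < n$ by a maximal inequality, using the exponential tail estimate in the spirit of Lemma \ref{lemma_xi_moments}(c). First I would record the structural features of $W_n$. By (\ref{stepDistribution}) the increments $x_i$ take values in $\{-1, 0, 1, 2, \ldots\}$, so $W_n$ is \emph{skip-free to the left}; moreover $\E x_i = \E L^\ast - 1 = 0$ and $\Var x_i = (\sigma^\ast)^2$. Reversing the path on $[0,l]$ by setting $V(j) := W_n(l) - W_n(l-j)$ for $0 \le j \le l$ produces a walk with the same increment law, for which a position $i$ is a weak right minimum of $W_n$ on $[0,l]$ if and only if $l-i$ is a weak ascending record of $V$. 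Hence $RM_n(l)$ equals the number of weak records of $V$ on $\{1,\ldots,l\}$, while $W_n(l) - \min_{0 \le i \le l} W_n(i) = \max_{0 \le j \le l} V(j)$. Thus the quantity inside the absolute value is exactly $\max_{[0,l]} V - RM_n(l)\,(\sigma^\ast)^2/2$, a statement purely about the record structure of the centered walk $V$.

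Next I would use the weak ascending ladder structure of $V$. The weak ascending ladder epochs of $V$ are precisely its weak record times, so $RM_n(l)$ counts them and $\max_{[0,l]} V = \sum_{j \le RM_n(l)} h_j$, where the $h_j$ are i.i.d.\ weak ascending ladder heights. The key distributional input is the fluctuation-theory identity $\E[H_+^{\mathrm w}]\,\E[-H_-^{\mathrm s}] = (\sigma^\ast)^2/2$ for a centered walk of variance $(\sigma^\ast)^2$; since $V$ is skip-free to the left, its strict descending ladder height is identically $1$, so this yields $\E[h_j] = \E[H_+^{\mathrm w}] = (\sigma^\ast)^2/2$. (A sanity check in the simple-walk case, where each level is visited a geometric number of times, confirms the factor $1/2$.) Consequently the lemma's expression equals $M_{RM_n(l)}$, where $M_m := \sum_{j \le m}\big(h_j - (\sigma^\ast)^2/2\big)$ is a mean-zero random walk. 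Because the increment law of $W_n$ has exponential tails — Condition (\ref{basicCondition}) is inherited by $w_j^\ast$ after the $\alpha$-shift, hence by the $h_j$ through the ladder-height generating function — the centered increments of $M_m$ have a finite moment generating function in a neighborhood of $0$.

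For the uniform bound I would note $\sup_{0 \le l < n}\big|M_{RM_n(l)}\big| \le \max_{m \le R^\ast}|M_m|$ with $R^\ast := RM_n(n-1)$. First I would show $\P(R^\ast \ge C\sqrt n) \le e^{-\gamma n^\nu}$ (by the same exponential-tail argument used for the maximal increment in Lemma \ref{lemma_xi_moments}(c), since on the typical event $\max V$ and $R^\ast$ are both of order $\sqrt n$), reducing the problem to $\max_{m \le C\sqrt n}|M_m|$. As $M_m$ is a martingale, Doob's exponential maximal inequality applied to $e^{\lambda M_m}$ together with a Cramér bound gives $\P(\max_{m \le C\sqrt n}|M_m| \ge t) \le 2\exp\!\big(-\lambda t + C\sqrt n\,\psi(\lambda)\big)$, where $\psi(\lambda)\approx \tfrac12\lambda^2\Var(h_j)$. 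Taking $t = n^{1/4+\nu}$ and $\lambda \sim n^{\nu-1/4}$ (which tends to $0$ when $\nu < 1/4$, so the quadratic regime applies) produces an exponent of order $-n^{2\nu}$, hence a bound $e^{-c n^{2\nu}} \le e^{-\gamma n^\nu}$; the contribution of atypically large single ladder heights is handled separately by truncation at level $n^\beta$, as in Lemma \ref{lemma_xi_moments}(c).

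The main obstacle is the joint calibration in the third step: one must not merely show that the ratio of height to record count converges, but that the \emph{centered} ladder-height martingale concentrates at the precise scale $n^{1/4+\nu}$ with the precise rate $e^{-\gamma n^\nu}$, \emph{uniformly in $l$}. This forces simultaneous control of the number of weak records $R^\ast$ and of the tails of the individual ladder heights $h_j$, and it is exactly the exponential-tail hypothesis that makes both the per-record moment generating function finite and the maximal-inequality estimate succeed; identifying the constant $(\sigma^\ast)^2/2$ as the mean weak ascending ladder height (via Wiener--Hopf and skip-freeness) is the other essential ingredient.
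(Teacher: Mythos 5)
You should first note that the paper does not prove this lemma at all: it is imported verbatim as Corollary 5 of Marckert--Mokkadem \cite{marckert_mokkadem2003}, with only the remark that the original proof reduces the problem to ladder heights of a random walk, handled with the tools of Feller \cite{feller71}, Chapter XII. Your reconstruction is faithful to that route, and its core is sound: the time reversal $V^{(l)}(j) = W_n(l) - W_n(l-j)$ does have the same increment law, it correctly converts right minima on $[0,l]$ into weak ascending records of $V^{(l)}$, and the identification of the constant is right --- the Wiener--Hopf pairing $\E[H_+^{\mathrm w}]\,\E[-H_-^{\mathrm s}] = (\sigma^\ast)^2/2$ combined with skip-freeness to the left (strict descending ladder height $\equiv 1$) gives $\E[h_j] = (\sigma^\ast)^2/2$, and your simple-walk sanity check confirms it.

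There are, however, two genuine gaps. First, the uniformity in $l$: the ladder decomposition is produced by reversing the path \emph{at the endpoint} $l$, so the i.i.d.\ ladder heights $h_j$ depend on $l$. There is no single martingale $(M_m)$ with $M_{RM_n(l)}$ equal to the quantity in question for every $l$ simultaneously --- indeed $RM_n(\cdot)$ is not monotone (a new low pops previously counted right minima, exactly like the DFS stack), so the centered sums at different $l$ are not nested partial sums of one i.i.d.\ sequence, and your inequality $\sup_{0\le l<n}|M_{RM_n(l)}| \le \max_{m \le R^\ast}|M_m|$ is not defined, let alone valid. The standard repair is to prove the concentration bound for each fixed $l$ using the distributional identity $(V^{(l)}(j))_{j\le l} \stackrel{d}{=} (W_n(j))_{j\le l}$ and then take a union bound over $l < n$; the factor $n$ is harmless against $e^{-\gamma n^\nu}$. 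Second, the claim $\P(R^\ast \ge C\sqrt n) \le e^{-\gamma n^\nu}$ for a \emph{constant} $C$ is false: ladder epochs of a centered walk have tails of order $t^{-1/2}$, so the number of weak records by time $n$ exceeds $C\sqrt n$ with probability decaying in $C$ (roughly like $e^{-cC^2}$, via the lower tail of a positive $\tfrac12$-stable law) but \emph{not} in $n$; the argument of Lemma \ref{lemma_xi_moments}(c), which controls maximal increments, does not yield it. The correct input is $\P\big(\sup_l RM_n(l) \ge n^{1/2+\nu}\big) \le e^{-\gamma n^\nu}$ --- precisely Lemma 5 of \cite{marckert_mokkadem2003}, which this paper itself invokes later in the proof of Lemma \ref{lemmaZeta} --- and with that threshold your Doob/Cram\'er step still closes, since $\lambda t \gg n^{1/2+\nu}\lambda^2$ for $t = n^{1/4+\nu}$ and $\lambda$ a small multiple of $n^{\nu - 1/4}$.
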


Note that if $W_n(j)$ happens to be an excursion, then $\min_{0\leq i \leq l} W_n(i) = 0$ and in this case Lemma \ref{lemma_RM} says that $RM_n(l)$ after scaling by $(\sigma^\ast)^2/2$ is close to the excursion $W_n(l)$. 

The proof of this result in \cite{marckert_mokkadem2003} is based on a reduction of the problem to a problem about ladder heights of a random walk, for which appropriate tools are available in Feller's book (\cite{feller71}, Chapter XII).

By using this result, we can complete the second step of the proof. 
\begin{lemma}
\label{lemmaSandH}
 Let $S_n(l)$ and $H_n(l)$ be the \L ukasiewicz and the height processes for a random tree in $\Omega_{k, n}$. 
For any $\nu > 0$, there exists constants $\gamma > 0$ and $N > 0$ such that for all $n \geq N$ and $(n, k) \in PP(w)$, 
\begin{equation} 
\label{RM_cond}
\P\bigg(\sup_{0 \leq l < n} \Big| S_n(l) - 
 \frac{(\sigma^\ast)^2}{2} H_n(l)\Big| \geq n^{1/4 + \nu} \bigg) \leq e^{-\gamma n^\nu}.
\end{equation}
\end{lemma}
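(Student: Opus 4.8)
The plan is to deduce the conditioned estimate (\ref{RM_cond}) from the unconditioned estimate (\ref{RM_uncond}) of Lemma \ref{lemma_RM} by a simple conditioning inequality, exploiting the fact that the \L ukasiewicz path $S_n$ of a random tree in $\Omega_{k,n}$ is precisely the free random walk $W_n$ of that lemma, conditioned to be an excursion with the correct number of down-steps. The exponential gap in (\ref{RM_uncond}) is small enough to absorb the merely polynomial cost of this conditioning.

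First I would fix notation. Let $W_n$ be the random walk of Lemma \ref{lemma_RM}, with i.i.d. increments $\P(x_i = s) = w^\ast_{s+1}$ for $s \geq -1$; it has mean zero and variance $(\sigma^\ast)^2$, and $\P(x_i = -1) = w_0^\ast = \alpha$. Let $\mathcal{E}$ denote the event that $W_n$ is a \L ukasiewicz path with exactly $k$ down-steps, i.e. $W_n(l) \geq 0$ for $0 \leq l \leq n-1$, $W_n(n) = -1$, and $\#\{i : x_i = -1\} = k$. Theorem \ref{theoAlgorithm} together with the cycle lemma (Lemma \ref{lemmaCyclicShift}) and the cyclic-shift invariance of the i.i.d. law shows that the law of $S_n$ coincides with the law of $W_n$ conditioned on $\mathcal{E}$. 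The key simplification is that on $\mathcal{E}$ the walk $W_n$ is itself an excursion, so $\min_{0 \leq i \leq l} W_n(i) = 0$ for $l \leq n-1$, while the right-minima count $RM_n(l)$ of (\ref{rightMinima1}) is literally the height process $H_n(l)$ of (\ref{rightMinima0}) computed from the same path. Hence, writing $A$ for the event inside the probability in (\ref{RM_uncond}), on $\mathcal{E}$ the quantity defining $A$ reduces to $|S_n(l) - \frac{(\sigma^\ast)^2}{2}H_n(l)|$, so that $A \cap \mathcal{E}$ equals $\mathcal{E}$ intersected with the event of (\ref{RM_cond}), and therefore
\begin{equation*}
\P\Big(\sup_{0 \leq l < n}\big|S_n(l) - \tfrac{(\sigma^\ast)^2}{2}H_n(l)\big| \geq n^{1/4+\nu}\Big) = \frac{\P(A \cap \mathcal{E})}{\P(\mathcal{E})} \leq \frac{\P(A)}{\P(\mathcal{E})} \leq \frac{e^{-\gamma n^\nu}}{\P(\mathcal{E})}.
\end{equation*}

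It then remains to bound $\P(\mathcal{E})$ below by an inverse power of $n$, uniformly for $k = \alpha n + O(1)$ and $(n,k) \in PP(w)$. By the cycle lemma each bridge with $k$ down-steps (and all $n$ of its cyclic shifts are distinct, since the increment sum $-1$ precludes a nontrivial period) has exactly one shift lying in $\mathcal{E}$, and a shift changes neither the product weight nor the number of down-steps; hence $\P(\mathcal{E}) = \frac{1}{n}\,\P\big(W_n(n) = -1,\ \#\{i:x_i = -1\} = k\big)$. I would estimate this last probability by conditioning on $\{\#\{i:x_i=-1\}=k\}$, a Binomial$(n,\alpha)$ event of probability $\Omega(n^{-1/2})$ since $k=\alpha n+O(1)$; given it, the condition $W_n(n)=-1$ is equivalent to the $n-k$ nonzero degrees (i.i.d. with pmf $\wh w^\ast$, whose mean $1/(1-\alpha)$ makes the sum have mean $n+O(1)$) summing to $n-1$, an event of probability $\Omega(n^{-1/2})$ by the local limit theorem (Theorem I.4.2 in Kolchin \cite{kolchin84}). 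This gives $\P(\mathcal{E}) = \Omega(n^{-2})$, so that $\P(\mathcal{E})^{-1} e^{-\gamma n^\nu} = O(n^2)\, e^{-\gamma n^\nu} \leq e^{-\gamma' n^\nu}$ for any $\gamma' < \gamma$ and all large $n$, which is (\ref{RM_cond}) with exponent $\nu$ unchanged.

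The main obstacle is this final lower bound on $\P(\mathcal{E})$: one must make the two local-limit estimates uniform across the window $k = \alpha n + O(1)$ and across the pairs $(n,k) \in PP(w)$, and verify the lattice and non-degeneracy hypotheses of the local limit theorem, the relevant target value being attainable precisely because $(n,k) \in PP(w)$. Everything else is robust, because (\ref{RM_uncond}) decays like $e^{-\gamma n^\nu}$ while the denominator $\P(\mathcal{E})$ is only polynomially small: any inverse-polynomial lower bound suffices, and the conditioning step preserves both the threshold $n^{1/4+\nu}$ and, up to shrinking the constant, the exponential rate.
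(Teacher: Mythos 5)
Your proposal is correct and takes essentially the same route as the paper's proof: both deduce (\ref{RM_cond}) from Lemma \ref{lemma_RM} via the inequality $\P(\mathfrak{A}\mid\mathfrak{B})\leq \P(\mathfrak{A})/\P(\mathfrak{B})$, using that on the excursion-with-$k$-down-steps event the running minimum vanishes and $RM_n$ coincides with $H_n$, and both lower-bound $\P(\mathfrak{B})$ polynomially by combining a local-limit estimate for the Binomial down-step count, a local limit theorem for the bridge event, and the cycle lemma's factor $1/n$. Your $\Omega(n^{-2})$ is in fact the correct power (the paper's stated $n^{-5/2}$ appears to be a harmless arithmetic slip), and in either case the polynomial cost is absorbed by the exponential decay exactly as you argue.
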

\begin{proof}
From (\ref{rightMinima0}) and (\ref{rightMinima1}), it can be seen that the probability of the event in (\ref{RM_cond})  equals the probability of the event in (\ref{RM_uncond}), denoted as $\mathfrak{A}_n$, conditional on the event $\mathfrak{B}_n$ that $W_n(i)$, $i = 0, \ldots, n$ is an excursion with exactly $k$ steps equal to $-1$. We have 
\bal{
\P(\mathfrak A_n | \mathfrak B_n) \leq \frac{\P(\mathfrak A_n)}{\P(\mathfrak B_n)} \leq \frac{e^{-\gamma n^\nu}}{\P(\mathfrak B_n)}.
}
It follows that it is enough to show that $\P(\mathfrak B_n)$ is sufficiently large. Let us introduce two other events: $\mathfrak C_n$ is the event that $W_n(i)$ has exactly $k$ steps equal to $-1$ and $\mathfrak{E}_n$ is the event that $W_n(i)$ is an excursion. Then $\mathfrak B_n = \mathfrak C_n \cap \mathfrak E_n$ and 
\bal{
\P(\mathfrak B_n) = \P(\mathfrak E_n | \mathfrak C_n) \P(\mathfrak C_n). 
}

The probability of the increment $-1$ in the random walk $W_n$ equals to $w^\ast_0 = \alpha$, so by the local central limit theorem the probability of the event $\mathfrak C_n$ is asymptotically equal to $a/\sqrt{n}$ with $a > 0$. 

In order to calculate $\P(\mathfrak E_n | \mathfrak C_n)$ we note that the distribution of the increments conditional that they are non-negative is given  by 
\bal{
\P(x_j = s | x_j \geq 0) = \frac{w^\ast_{s + 1}}{1 - \alpha}, \text{ for } s = 0, 1, 2, \ldots 
}
The conditional expectation of the increment equals 
\bal{
\E(x_j | x_j \geq 0) &= \frac{1}{1 - \alpha}\sum_{s = 0}^\infty s w^\ast_{s+1} = \frac{1}{1 - \alpha}\sum_{k = 1}^\infty (k - 1) w^\ast_{k} 
\\
&= \frac{1}{1 - \alpha}\Big(\sum_{k = 1}^\infty k w^\ast_{k} - \sum_{k = 1}^\infty  w^\ast_{k}\Big) = \frac{1}{1 - \alpha}\Big(1 - (1 - \alpha)\Big)
\\
& = \frac{\alpha}{1 - \alpha}. 
}
Then the expectation of the sum $W_n(n)$ conditional that exactly $k$ of the increments are equal to $-1$ is
\bal{
\E( W_n(n) | \mathfrak C_n) &= - k + \E(x_j | x_j \geq 0) (n - k) 
\\
&= - \alpha n + \frac{\alpha}{1 - \alpha} (1 - \alpha) n + O(1)
= O(1). 
}
Again, by using the local central limit theorem, we find that the probability that $W_n$ is a random walk bridge, conditional on the event $\mathfrak C_n$, is asymptotically equal to $b/\sqrt{n}$ with $b > 0$. 
Then, all $n$ cyclic shifts of this random walk bridge have the same probability by exchangeability of the increments and exactly one of these shifts is an excursion by the cyclic lemma. Hence, conditional on $\mathfrak C_n$, the random walk $W_n$ is an excursion with probability $\sim b/n^{3/2}$. Altogether we conclude that 
\bal{
\P(\mathfrak B_n) \sim \frac{a b}{n^{5/2}}, 
} 
with $a b > 0$. 
Hence, 
\bal{
\P(\mathfrak A_n | \mathfrak B_n) \leq \frac{1}{ab} n^{5/2} e^{-\gamma n^\nu} \leq e^{-\gamma' n^\nu},
}
for a suitably chosen $\gamma'$ and this completes the proof of the lemma. 
\end{proof}

%

\textbf{Step 3. The height process is close to the contour process.}

Here we introduce a couple of purely combinatorial results from Marckert and Mokkadem\cite{marckert_mokkadem2003}  that relate the height and contour processes. 

 \begin{defi} 
 \label{defi_ml}
Let $T\in \Omega_n$ be a rooted planar tree on $n$ vertices. For every $l = 0, \ldots, n - 1$, define $m(l)$ as the first time when the Depth First Search reaches the vertex $v_l$.
 \end{defi}
 
  It is clear that the sequence $\Big(m(l)\Big)_{l = 0}^{n -1}$ is strictly  increasing. 
 
 In the example in Fig. \ref{figTree}, we have:
 \begin{center}
 \begin{tabular}{c | c c c c c c c c}
 l & 0 & 1 & 2 & 3 & 4 & 5 & 6 & 7 \\
 m(l) & 0 & 1 & 3 & 4 & 7 & 8 & 9 & 12 
 \end{tabular} 
 \end{center}

 By definition of the contour process $C_n(t)$, it is easy to see that for all $l$, the points $\big(m(l), H_n(l)\big)$ are located on the graph of the contour process.  Note also that for all $k \in \big[m(l), m(l + 1)\big)$, 
 \begin{equation}
 \label{interpolationBounds}
 H_n(l + 1) - 1 \leq C_n(k) \leq H_n(l).
\end{equation} 
 (see Lemma 3 in \cite{marckert_mokkadem2003}). 
 Intuitively, $\big(m(l), H_n(l)\big)$ interpolate the contour process, and $l \mapsto m(l)$ can be thought as a time change that connects the contour process to the height process. See the illustration in Fig. \ref{figTree}.

 Another important combinatorial result proved in \cite{marckert_mokkadem2003} is that for every $l \in \{0, \ldots, n - 1\}$,
 \begin{equation}
 \label{mIdentity}
 m(l) = 2l - H_n(l).
 \end{equation} 
 Since for large $l$, we can expect that $H_n(l)$ has the order $\sqrt{l}$ with large probability, this identity suggests that the time change function $m(l) \sim 2 l$ with large probability, and that the deviations of $m(l)$ from $2l$ are of order $\sqrt{l}$. 
 
 Now, let us define $\tau(nt)$ as the integer such that 
 \begin{equation}
 \label{defiZeta}
 \lfloor 2n t \rfloor \in \Big[ m\big( \tau(nt) \big), m\big( \tau(nt) + 1 \big)\Big). 
\end{equation}
 (Intuitively, this function plays the role of the inverse of $m(l)$ scaled by $2$.) 
 
 Then we have the following result. 
 \begin{lemma}
 \label{lemmaHandC}
 For all $a>0$ and $\nu > 0$, there exists such $\gamma > 0$ that 
 \bal{
 \P\Big( \sup_{t \in [0, 1]} \Big| H_n\big(\tau(nt)\big)
 - C_n(2nt) \Big| \geq a n^{1/4 +\nu}\Big) \leq e^{-\gamma n^\nu}
 }
 \end{lemma}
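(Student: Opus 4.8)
The plan is to reduce the statement to a bound on the one-step jumps of the height process, and then to control those jumps by combining the \L ukasiewicz approximation of Lemma \ref{lemmaSandH} with the out-degree tail bound of Lemma \ref{lemma_xi_moments}(c).

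First I would carry out the purely combinatorial reduction. Fix $t \in [0,1]$ and set $l = \tau(nt)$, so that by (\ref{defiZeta}) the integer $k_0 = \lfloor 2nt \rfloor$ satisfies $m(l) \leq k_0 < m(l+1)$. Then the interpolation bounds (\ref{interpolationBounds}) give $H_n(l+1) - 1 \leq C_n(k_0) \leq H_n(l)$, while the contour process is $1$-Lipschitz (consecutive integer values differ by exactly one), so $|C_n(2nt) - C_n(k_0)| \leq 1$. Together these yield
\[
-1 \leq H_n(l) - C_n(2nt) \leq \big(H_n(l) - H_n(l+1)\big) + 2 .
\]
Identity (\ref{mIdentity}) gives $H_n(l) - H_n(l+1) = m(l+1) - m(l) - 2 \geq -1$ since $m$ is strictly increasing, and hence $|H_n(\tau(nt)) - C_n(2nt)| \leq |H_n(l) - H_n(l+1)| + 2$. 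Taking the supremum over $t$, it remains to bound $\sup_{0 \leq l \leq n-2} |H_n(l) - H_n(l+1)|$, the largest single jump of the height process.

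Next I would estimate a single jump through the \L ukasiewicz path. On the event of Lemma \ref{lemmaSandH}, of probability at least $1 - e^{-\gamma n^\nu}$, we have $|S_n(l) - \frac{(\sigma^\ast)^2}{2} H_n(l)| < n^{1/4 + \nu}$ uniformly in $l$; subtracting the bounds at $l$ and $l+1$ gives
\[
\frac{(\sigma^\ast)^2}{2}\, \big|H_n(l) - H_n(l+1)\big| \leq \big|S_n(l) - S_n(l+1)\big| + 2 n^{1/4 + \nu} .
\]
The increment $S_n(l+1) - S_n(l) = \xi_{l+1} - 1$ is governed by the maximal out-degree; since the cyclic shift in Algorithm A only permutes the values of $(\xi''_i)$, Lemma \ref{lemma_xi_moments}(c) applied with a small exponent $\beta$ gives $\max_i \xi_i \leq n^\beta$ off an event of probability at most $e^{-\gamma' n^\beta}$. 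On the intersection of these two good events every jump is therefore at most $\frac{2}{(\sigma^\ast)^2}(n^\beta + 1 + 2 n^{1/4 + \nu})$, and combining with the reduction above shows that $\sup_t |H_n(\tau(nt)) - C_n(2nt)| \leq C n^{1/4 + \nu}$ for a constant $C = C(\sigma^\ast)$ once $\beta \leq 1/4$ and $n$ is large.

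The hard part will be the bookkeeping of this constant against the free parameter $a$. Because the \L ukasiewicz approximation contributes the fixed factor $4/(\sigma^\ast)^2$ to the leading term, the bound $C n^{1/4 + \nu} \leq a n^{1/4 + \nu}$ is immediate only for $a$ above this threshold; for smaller $a$ one must run Lemma \ref{lemmaSandH} at a slightly smaller exponent $\nu' < \nu$, so that $C n^{1/4 + \nu'} \leq a n^{1/4 + \nu}$ holds for all large $n$, at the cost of replacing the error probability $e^{-\gamma n^\nu}$ by $e^{-\gamma n^{\nu'}}$. I expect this exponent/constant trade-off, rather than the combinatorial reduction, to be the only real point requiring care; the stretched-exponential decay it produces is in any case more than sufficient for the tightness and convergence arguments in which Lemma \ref{lemmaHandC} is subsequently used.
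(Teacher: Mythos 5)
Your proposal is correct and takes essentially the same route as the paper: a combinatorial reduction via the interpolation bounds (\ref{interpolationBounds}) to the maximal one-step jump of $H_n$, which is then controlled by applying Lemma \ref{lemmaSandH} at $l$ and $l+1$ together with Lemma \ref{lemma_xi_moments}(c) for the increment $|S_n(l+1)-S_n(l)|$ --- exactly the paper's three-term decomposition. Your extra care about non-integer arguments of $C_n$ and about the constant-versus-$a$ (exponent $\nu'<\nu$) trade-off only makes explicit what the paper's proof silently absorbs into ``a suitable $\gamma>0$''.
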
 
 \begin{proof}
 By (\ref{interpolationBounds}), we have 
  \bal{
 \P\Big(\sup_{t \in [0, 1]} \Big| H_n\big(\tau(nt)\big)
 - C_n(2nt) \Big| \geq a n^{1/4 +\nu}\Big) 
 \\
 \leq \P\Big( \sup_{l \in \{0, \ldots, n-2\}} \Big| H_n\big(l\big)
 - H_n\big(l + 1\big) + 1\Big| \geq a n^{1/4 +\nu}\Big)
 \\
 \leq \P\Big(\sup_{l \in \{0, \ldots, n-2\}} \Big| H_n\big(l\big)
 - \frac{2}{(\sigma^\ast)^2}S_n(l)\Big| \geq \frac{a}{3}  n^{1/4 +\nu}\Big)
\\
 + \P\Big(\sup_{l \in \{0, \ldots, n-2\}} \Big|  H_n\big(l + 1\big) - 1
 -  \frac{2}{(\sigma^\ast)^2}S_n(l + 1)\Big| \geq \frac{a}{3} n^{1/4 +\nu}\Big),
 \\
 + \P\Big(\sup_{l \in \{0, \ldots, n-2\}} \frac{2}{(\sigma^\ast)^2} \Big|  S_n\big(l + 1\big) 
 -  S_n(l)\Big| \geq \frac{a}{3}  n^{1/4 +\nu}\Big),
 }
 and by Lemmas \ref{lemma_xi_moments}(c) and \ref{lemmaSandH}, the sum of these three probabilities can be bounded by $e^{-\gamma n^\nu}$ with a suitable $\gamma > 0$. 
 \end{proof}

 By using Lemmas \ref{lemmaSandH} and \ref{lemmaHandC}, we can relate the \L ukasiewicz process $S(\tau(nt))$ and the contour process $C_n(nt)$. To move further, we need to estimate the difference between $\tau(nt)$ and $nt$. 
 
 \begin{lemma} 
 \label{lemmaZeta}
 Let $\tau(nt)$ be as in (\ref{defiZeta}) for the random time $m(l)$ defined for a random tree in $\Omega_{k,n}$. Then, 
 \bal{
 \P\Big(\sup_{t \in [0, 1]} \big|\tau(nt) - nt\big| > n^{1/2 + \nu}\Big) \leq e^{-\gamma n^\nu}.
 }
 \end{lemma}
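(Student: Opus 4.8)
The plan is to reduce the control of $\tau(nt)-nt$ to a bound on the maximal height of the tree, and then to control the height through the comparison with the \L ukasiewicz path that was already established. I expect everything except one exponential tail estimate to be bookkeeping. Note first that for $\nu \geq 1/2$ the claim is trivial, since $\tau(nt),\,nt \in [0,n]$ forces $|\tau(nt)-nt|\leq n \leq n^{1/2+\nu}$ deterministically, so I may assume $0 < \nu < 1/2$. The first step is to turn the identity $m(l)=2l-H_n(l)$ from (\ref{mIdentity}) and the defining bracket (\ref{defiZeta}) into a pathwise inequality. Writing $\tau=\tau(nt)$, from $m(\tau)\leq \lfloor 2nt\rfloor < m(\tau+1)$ and $m(l)=2l-H_n(l)$ one obtains, after rearranging and using $H_n\geq 0$, the two-sided bound $-\tfrac32 < \tau(nt)-nt \leq \tfrac12 H_n(\tau(nt))$, and hence
\[
\sup_{t\in[0,1]}\big|\tau(nt)-nt\big| \;\leq\; \tfrac32 + \tfrac12 \max_{0\leq l\leq n-1} H_n(l).
\]
Thus it suffices to show that the tree height $\max_l H_n(l)$ exceeds a small constant multiple of $n^{1/2+\nu}$ only on an event of probability $e^{-\gamma n^\nu}$.

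For the second step I would invoke Lemma \ref{lemmaSandH}: with probability at least $1-e^{-\gamma n^\nu}$ we have $\frac{(\sigma^\ast)^2}{2}H_n(l)\leq S_n(l)+n^{1/4+\nu}$ for all $0\leq l<n$, so that $\max_l H_n(l) \leq \frac{2}{(\sigma^\ast)^2}\big(\max_l S_n(l) + n^{1/4+\nu}\big)$ on this event. This reduces the whole problem to an exponential tail bound for the maximum of the \L ukasiewicz path of the random tree.

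The third step handles $\max_l S_n(l)$ by the same conditioning device used in the proof of Lemma \ref{lemmaSandH}. On the event $\mathfrak B_n$ that the walk $W_n$ with increments (\ref{stepDistribution}) is an excursion with exactly $k$ down-steps, $W_n$ coincides with $S_n$, so for any $x$,
\[
\P\big(\max_l S_n(l) > x\big) \;=\; \P\big(\max_l W_n(l) > x \,\big|\, \mathfrak B_n\big) \;\leq\; \frac{\P\big(\max_l W_n(l) > x\big)}{\P(\mathfrak B_n)},
\]
and the proof of Lemma \ref{lemmaSandH} already supplies $\P(\mathfrak B_n)\gtrsim n^{-5/2}$. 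The increments of $W_n$ have mean zero (because $\E L^\ast=1$) and exponentially declining tails, so $W_n$ is a martingale whose increment has a finite moment generating function in a neighbourhood of the origin. Applying Doob's maximal inequality to the submartingale $e^{\lambda W_n(l)}$ and then the Chernoff estimate $\P(\max_l W_n(l)\geq x)\leq e^{-\lambda x}\big(\E e^{\lambda x_1}\big)^n \leq e^{-\lambda x + c_0\lambda^2 n}$, valid for $\lambda$ in that neighbourhood, and optimizing over $\lambda=x/(2c_0 n)$ (which tends to $0$ when $x=o(n)$, hence is admissible for $\nu<1/2$), yields $\P(\max_l W_n(l) > x) \leq e^{-c x^2/n}$. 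Choosing $x$ a suitable small multiple of $n^{1/2+\nu}$ gives $\P(\max_l W_n(l)>x)\leq e^{-c' n^{2\nu}}$, and dividing by $\P(\mathfrak B_n)^{-1}\asymp n^{5/2}$ still leaves a bound $e^{-\gamma' n^\nu}$ once the polynomial factor is absorbed into the stretched-exponential.

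Finally I would assemble the pieces on the intersection of the good event of Lemma \ref{lemmaSandH} with $\{\max_l S_n(l)\leq c_1 n^{1/2+\nu}\}$: there the second step gives $\max_l H_n(l)\leq c_2 n^{1/2+\nu}$, and the first step then gives $\sup_t|\tau(nt)-nt|\leq n^{1/2+\nu}$ for all large $n$, provided the constants are chosen so that the additive $\tfrac32$ and the lower-order $n^{1/4+\nu}$ term are absorbed (which is why it is convenient to prove the intermediate bounds with a slightly smaller exponent and a free constant). The main obstacle is Step 3: obtaining a genuinely \emph{exponential} (not merely polynomial) tail for $\max_l S_n(l)$. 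The key point is that the martingale and Chernoff machinery applies cleanly only to the \emph{unconditioned} walk $W_n$, and one must pay the polynomial price $\P(\mathfrak B_n)^{-1}\asymp n^{5/2}$ for the conditioning afterwards; the stretched-exponential $e^{-c'n^{2\nu}}$ is precisely what survives this polynomial loss. The rest is routine matching of exponents and constants.
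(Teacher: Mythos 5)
Your proof is correct, and while its opening reduction coincides with the paper's, the key estimate is obtained by a genuinely different route. Both you and the paper combine the bracket (\ref{defiZeta}) with the identity $m(l)=2l-H_n(l)$ to reduce everything to a tail bound on the tree height; your two-sided bound $-\tfrac{3}{2}<\tau(nt)-nt\le\tfrac{1}{2}H_n(\tau(nt))$ is in fact a slightly more careful version of the paper's $\sup_t|2\tau(nt)-2nt|\le 2\max_l H_n(l)$. The divergence is in how $\max_l H_n(l)$ is controlled. The paper uses the right-minima representation (\ref{rightMinima0}) and cites Lemma 5 of Marckert--Mokkadem \cite{marckert_mokkadem2003}, which gives $\P\big(\sup_l RM_n(l)\ge n^{1/2+\nu}\big)\le e^{-\gamma n^\nu}$ for the unconditioned walk $W_n$, and then transfers this to $S_n$ by the conditioning argument of Lemma \ref{lemmaSandH}. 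You instead reuse Lemma \ref{lemmaSandH} itself to trade $H_n$ for $S_n$ up to an $n^{1/4+\nu}$ error, and prove the required tail $\P\big(\max_l W_n(l)>c_1 n^{1/2+\nu}\big)\le e^{-c\,n^{2\nu}}$ from scratch via Doob's maximal inequality applied to $e^{\lambda W_n}$ and a Chernoff optimization --- legitimate here since the $\alpha$-shifted increments are centered with a finite exponential moment and your choice $\lambda=x/(2c_0 n)\to 0$ is admissible for $\nu<1/2$ (and you rightly note the claim is vacuous for $\nu\ge 1/2$). You then pay the same polynomial price $\P(\mathfrak B_n)^{-1}\asymp n^{5/2}$ as the paper, which the stretched exponential absorbs. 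What your route buys is self-containedness: it avoids a second external appeal to \cite{marckert_mokkadem2003} (the ladder-height machinery still enters indirectly through Lemma \ref{lemmaSandH}) and yields the stronger intermediate tail $e^{-c\,n^{2\nu}}$ for the maximum of the path. What the paper's route buys is brevity: the right-minima bound applies to the height directly, with no need for the constant-chasing required to absorb the additive $\tfrac{3}{2}$ and the $n^{1/4+\nu}$ error --- a point you correctly flag and handle by keeping free constants in the intermediate bounds.
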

 \begin{proof}
 By the triangle inequality, we have 
 \bal{
 \sup_{t \in [0, 1]} \big|2\tau(nt) - 2nt\big| \leq \sup_{t \in [0, 1]} \big|2\tau(nt) - m(\tau(nt))\big|
 + \sup_{t \in [0, 1]} \big|m(\tau(nt)) - 2nt\big|.
 }
 By applying Equation (\ref{mIdentity}) with $l = \tau(nt)$, we find that the first term  is bounded by $\sup_t H_n\big(\tau(nt)\big)$. For the second term, we note that by definition of $\tau(nt)$, it is bounded by 
 \bal{
 \sup_{t \in [0, 1]} \Big|m\big(\tau(nt) + 1\big) - m\big(\tau(nt)\big)\Big|
 &=  \sup_{l \in \{0, \ldots, n - 1\}} \big|m(l + 1) - m( l )\big|
 \\
 &\leq \sup_{l \in \{0, \ldots, n - 1\}} H_n(l). 
 }
 Altogether, we have
 \bal{
  \sup_{t \in [0, 1]} \big|2\tau(nt) - 2nt\big| \leq 2 \sup_{l \in \{0, \ldots, n - 1\}} H_n(l).
 }

By (\ref{rightMinima0}), $H_n(l)$ is the number of right minima for $S_n$ on the interval $[0, l]$. It was shown in Lemma 5 in \cite{marckert_mokkadem2003}, that the number of right minima for the random walk $W_n$ satisfies the bound 
\bal{
\P\Big(\sup_{l \in \{0, \ldots, n - 1\}} RM_n(l) \geq n^{1/2 + \nu}\Big) \leq e^{-\gamma n^\nu}
}
for a $\gamma > 0$.  An inequality of this form also holds for the number of right minima of $S_n$ by a conditioning argument similar to the argument that we used in the proof of Lemma \ref{lemmaSandH}. 

So we have, for some $\gamma > 0$, 
\bal{
\P\Big(\sup_{t \in [0, 1]} \big|\tau(nt) - nt\big| \leq n^{1/2 + \nu}\Big) 
\geq \P\Big(2 \sup_{l\in\{0, \ldots, n-1\}}  H_n(l) \leq n^{1/2 + \nu}\Big) \geq 1 - e^{-\gamma n^\nu}
}
and 
\bal{
\P\Big(\sup_{t \in [0, 1]} \big|\tau(nt) - nt\big| > n^{1/2 + \nu}\Big) 
 \leq e^{-\gamma n^\nu}
}

 \end{proof}

\begin{lemma}
\label{lemmaSandC}
Let $S_n(t)$ and  $C_n(t)$ be the \L ukasiewicz and the contour processes for a tree $T \in \Omega_{k, n}$. Then, for every $\nu > 0$, there exist two positive constants, $\gamma > 0$ and $N > 0$, such that for all $n \geq N$, 
\bal{
\P\bigg[ \sup_{t \in [0,1]} \Big| S_n(nt) - \frac{\sigma^2}{2}C_n(2nt)\Big|
\geq n^{1/4 + \nu}\bigg] \leq e^{-\gamma n^\nu}.
}
\end{lemma}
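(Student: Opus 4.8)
The plan is to derive Lemma~\ref{lemmaSandC} by chaining the three estimates already established---Lemma~\ref{lemmaSandH} ($S_n$ versus $H_n$), Lemma~\ref{lemmaHandC} ($H_n\circ\tau$ versus $C_n$), and Lemma~\ref{lemmaZeta} ($\tau(nt)$ versus $nt$)---together with one new ingredient: a uniform bound on the oscillation of the \L ukasiewicz path over short time windows. Throughout I would work on the intersection of the high-probability events supplied by those three lemmas, which fails with probability at most $e^{-\gamma n^\nu}$ after a harmless reduction of $\gamma$.

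The starting point is the triangle inequality
\begin{align*}
\Big| S_n(nt) - \tfrac{(\sigma^\ast)^2}{2}C_n(2nt)\Big|
&\le \big| S_n(nt) - S_n(\tau(nt))\big|
+ \Big| S_n(\tau(nt)) - \tfrac{(\sigma^\ast)^2}{2}H_n(\tau(nt))\Big| \\
&\quad + \tfrac{(\sigma^\ast)^2}{2}\big| H_n(\tau(nt)) - C_n(2nt)\big|,
\end{align*}
valid at integer arguments; passing between integer and real arguments costs at most $\max_i|\xi_i|\le n^\beta$ by Lemma~\ref{lemma_xi_moments}(c), which is negligible for small $\beta$. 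The second term is bounded by $n^{1/4+\nu}$ upon applying Lemma~\ref{lemmaSandH} at the random index $l=\tau(nt)$ (the supremum there ranges over all $l$), and the third term is bounded by $\tfrac{(\sigma^\ast)^2}{2}\,a\,n^{1/4+\nu}$ by Lemma~\ref{lemmaHandC}, where $a$ may be taken as small as we like. So everything reduces to the first term.

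For the first term, Lemma~\ref{lemmaZeta} gives $|\tau(nt)-nt|\le n^{1/2+\nu}$ on our event, so $|S_n(nt)-S_n(\tau(nt))|$ is at most the maximal oscillation $\mathrm{Osc}_n:=\max\{|S_n(b)-S_n(a)|:0\le a\le b\le n,\ b-a\le n^{1/2+\nu}\}$. The key new estimate I would prove is $\P(\mathrm{Osc}_n\ge n^{1/4+\nu})\le e^{-\gamma n^\nu}$. I would first establish this for the unconditioned walk $W_n$ of (\ref{stepDistribution}), whose increments are centered ($\E L^\ast-1=0$), have variance $(\sigma^\ast)^2$, and have exponential tails: over a window of length $L=n^{1/2+\nu}$ the walk fluctuates by order $\sigma^\ast n^{1/4+\nu/2}$, so the threshold $n^{1/4+\nu}$ is larger by a factor $n^{\nu/2}$, and a Bernstein-type inequality (using truncation at $n^\beta$ to handle the unbounded positive increments) gives per-window probability $\exp(-cn^\nu)$; a union bound over the $O(n)$ windows preserves the rate $e^{-\gamma n^\nu}$. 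I would then transfer this bound from $W_n$ to the \L ukasiewicz path $S_n$ by exactly the conditioning device of Lemma~\ref{lemmaSandH}: writing $\mathfrak A'_n$ for the oscillation event and recalling $\P(\mathfrak B_n)\ge c\,n^{-5/2}$, one gets $\P(\mathfrak A'_n\mid\mathfrak B_n)\le \P(\mathfrak A'_n)/\P(\mathfrak B_n)\le n^{5/2}e^{-\gamma n^\nu}\le e^{-\gamma' n^\nu}$.

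Combining the three bounds yields $\sup_t|S_n(nt)-\tfrac{(\sigma^\ast)^2}{2}C_n(2nt)|\le C\,n^{1/4+\nu}$ outside an event of probability at most $e^{-\gamma' n^\nu}$; the multiplicative constant $C$ is absorbed in the usual way by running the argument with a slightly smaller exponent, which is all that is needed downstream. I expect the oscillation estimate to be the main obstacle: the other two contributions are immediate from the cited lemmas, whereas controlling $|S_n(nt)-S_n(\tau(nt))|$ requires genuinely new input, namely that the walk cannot move more than $n^{1/4+\nu}$ over a window of length $\sim\sqrt n$. The delicate point is the balance of exponents---the window length $n^{1/2+\nu}$ coming from Lemma~\ref{lemmaZeta} must be short enough that the Bernstein exponent $h^2/L=n^\nu$ both beats the union bound over windows and survives the $n^{5/2}$ loss in the conditioning step---and it is precisely the exponential tail of the offspring distribution, via Lemma~\ref{lemma_xi_moments}(c), that makes this work.
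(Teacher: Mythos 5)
Your proposal is correct and follows essentially the same route as the paper: the same triangle-inequality decomposition into the three differences (plus the event $\{\sup_t|\tau(nt)-nt|\le n^{1/2+\nu}\}$ from Lemma~\ref{lemmaZeta}), with Lemmas~\ref{lemmaSandH} and~\ref{lemmaHandC} handling two terms and the short-window oscillation of the walk handling the first, transferred from $W_n$ to $S_n$ by exactly the conditioning argument of Lemma~\ref{lemmaSandH} against the polynomial lower bound on $\P(\mathfrak B_n)$. The only cosmetic difference is that where you propose re-deriving the oscillation estimate via a truncated Bernstein inequality and a union bound over $O(n)$ windows, the paper cites Theorem~III.15 of Petrov \cite{petrov75} for the same exponential maximal inequality.
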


\begin{proof}
We have 
\bal{
\P\Big(\sup_{t \in [0, 1]} \big| S_n(nt) - \frac{\sigma^2}{2}C_n(2nt)\big| \geq n^{1/4 + \nu} \Big) \leq A + B +C + D, 
}
where 
\bal{
A = \P\bigg(\sup_{t \in [0, 1]} \Big| S_n(nt) - S_n\big(\tau(nt)\big)\Big| \geq \frac{1}{3} n^{1/4 + \nu},
\sup_t\big|\tau(nt) - nt \big| < n^{1/2 + \nu}\bigg),
}
\bal{
B = \P\bigg(\sup_{t \in [0, 1]} \Big| S_n\big(\tau(nt)\big) - \frac{\sigma^2}{2}  H_n\big(\tau(nt)\big)\Big| \geq \frac{1}{3} n^{1/4 + \nu}\bigg),
}
\bal{
C = \P\bigg(\frac{\sigma^2}{2}\sup_{t \in [0, 1]} \Big| H_n\big(\tau(nt)\big)- C_n (2nt)\Big| \geq \frac{1}{3} n^{1/4 + \nu}\bigg),
}
\bal{
D = \P\Big( \sup_{t \in [0, 1]} \big|\tau(nt) - nt\big| > n^{1/2 + \nu}\Big).
}
The quantities $B$, $C$, and $D$ can be estimated using Lemmas \ref{lemmaSandH}, \ref{lemmaHandC}, and \ref{lemmaZeta}, respectively. For quantity $A$, note that we can estimate a similar quantity for unconditional random walk $W_n$:

\bal{
\P\bigg(\sup_{t \in [0, 1]} \Big| W_n(nt) - W_n\big(\tau(nt)\big)\Big| \geq \frac{1}{3} n^{1/4 + \nu},
\sup_t\big|\tau(nt) - nt \big| < n^{1/2 + \nu}\bigg)
\\
\leq \P\Big(\sup_{k \leq n^{1/2 + \nu}} \sup_{0 \leq j \leq n - k} \big| W_n(j) - W_n(j + k)\big| \geq \frac{1}{3} n^{1/4 + \nu}\Big)
\\
\leq 
\sum_{j = 1}^n \P \Big(\sup_{k \leq n^{1/2 + \nu}} \big| W_n(k)\big| \geq \frac{1}{3} n^{1/4 + \nu}\Big)
\leq e^{-\gamma n^\nu}.
}
The last step holds by Theorem III.15 from Petrov's book \cite{petrov75} (see also Appendix in \cite{marckert_mokkadem2003}), and here we use the assumption that $\E e^{a L} < \infty$ for some $a > 0$. 

This estimate remains valid for the process $S_n$ instead of $W_n$, perhaps with a different $\gamma > 0$, by a conditioning argument similar to the argument that was used in the proof of Lemma \ref{lemmaSandH}. Therefore, we can find such $\gamma > 0$, that $A + B + C + D \leq e^{-\gamma n^\nu}$.
\end{proof}

Now we can conclude the proof of Theorem \ref{theoConvergenceToExcursion}. In Lemma \ref{lemmaLukasToExcursion} we proved that  the normalized \L ukasiewicz path $S_n(tn)/(\sigma^\ast \sqrt{n})$ of a random tree from $\Omega_{k, n}$ converges to a standard Brownian excursion $e(t)$. Then Lemma \ref{lemmaSandC} implies that the process $\sigma^\ast C_n(2tn)/(2 \sqrt{n})$ also converges to the excursion $e(t)$. This concludes the proof. 

\end{proof}

%

For the proof of Theorem \ref{theoMostLikelyType}, we need a lemma, which allows us to compare the probabilities of various events for unconditional random walks and corresponding random bridges. As in the proof of Theorem \ref{theoAlgorithm}, let $\hat\zeta_1,  \ldots, \hat \zeta_{n-k}$ be an i.i.d sequence with $\hat \zeta_1$ having distribution $\wh w_0^\ast = 0, \hat w_i^\ast = w_i^\ast/(1 - \alpha)$, $i = 1, \ldots$.  Let 
\bal{
m := {\lfloor (n-k)/2 \rfloor},
}
and let $\FF_n^{(1/2)}$ denote the sigma-algebra generated by random variables $\hat\zeta_1,  \ldots, \hat \zeta_{m}$.

\begin{lemma}
\label{lemma_uncond2cond}
Suppose $(n, k_n) \in PP(w)$, $k_n = \alpha n + O(1)$, and the distribution $\{w_i\}$ has finite variance. For every event $\mathfrak{B} \in \FF_n^{(1/2)}$, 
\bal{
\P(\mathfrak{B} | \hat\zeta_1 +   \ldots + \hat \zeta_{n-k} = n - 1) \leq c \P(\mathfrak{B}),
 }
 where $c$ is a positive constant that depends on $\{w_i\}$ but not on $n$.
\end{lemma}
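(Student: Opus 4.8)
The plan is to reduce the conditional probability to an unconditional one by writing it as a ratio and by exploiting that $\mathfrak B$ is measurable with respect to the first half of the increments only. Since $m = \lfloor (n-k)/2\rfloor$, the complementary block $\hat\zeta_{m+1}, \ldots, \hat\zeta_{n-k}$ is independent of $\FF_n^{(1/2)}$ and consists of $n-k-m = \Theta(n)$ i.i.d. terms (here $1-\alpha>0$); this is exactly what makes the local limit theorem usable and is the reason one conditions only on the first half.

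First I would write
\bal{
\P\big(\mathfrak B \,\big|\, S_{\hat\zeta}(n-k) = n-1\big) = \frac{\P\big(\mathfrak B,\, S_{\hat\zeta}(n-k) = n-1\big)}{\P\big(S_{\hat\zeta}(n-k) = n-1\big)},
}
and decompose $S_{\hat\zeta}(n-k) = S_{\hat\zeta}(m) + T$, where $T := \hat\zeta_{m+1} + \cdots + \hat\zeta_{n-k}$. Conditioning on the value of $S_{\hat\zeta}(m)$ and using that $T$ is independent of $\FF_n^{(1/2)}$ (hence of the pair $(\mathfrak B,\, S_{\hat\zeta}(m))$) gives
\bal{
\P\big(\mathfrak B,\, S_{\hat\zeta}(n-k) = n-1\big) = \sum_{s} \P\big(\mathfrak B,\, S_{\hat\zeta}(m) = s\big)\,\P\big(T = n-1-s\big).
}

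The heart of the argument is two applications of the local limit theorem (Theorem I.4.2 in \cite{kolchin84}). For the numerator, since $T$ is a sum of $n-k-m = \Theta(n)$ i.i.d. increments with finite variance, one has the uniform bound $\P(T = j) \le C/\sqrt{n}$ for all $j$ and some constant $C$ independent of $n$; substituting this into the sum above and collapsing $\sum_s \P(\mathfrak B,\, S_{\hat\zeta}(m) = s) = \P(\mathfrak B)$ yields $\P(\mathfrak B,\, S_{\hat\zeta}(n-k) = n-1) \le (C/\sqrt{n})\,\P(\mathfrak B)$. For the denominator, recall $\E\hat\zeta_1 = 1/(1-\alpha)$, so $\E S_{\hat\zeta}(n-k) = (n-k)/(1-\alpha) = n + O(1)$ because $k = \alpha n + O(1)$; thus the target value $n-1$ lies within $O(1)$ of the mean, in the bulk of the distribution, and the local limit theorem gives a matching lower bound $\P(S_{\hat\zeta}(n-k) = n-1) \ge c_0/\sqrt{n}$ for some $c_0 > 0$ and all large $n$. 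Dividing, the factors of $\sqrt{n}$ cancel and we obtain the claim with $c = C/c_0$.

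The main obstacle is ensuring that both local limit estimates hold uniformly along the whole sequence of pairs $(n, k_n) \in PP(w)$, and in particular that the lower bound at the specific integer $n-1$ is valid. Two points need care: first, the lattice span of the increment distribution $\{\wh w_j^\ast\}$, which affects the estimates only through harmless constants and is compatible with the numerator bound being uniform (and zero off the lattice); second, the requirement that $n-1$ be an attainable value of $S_{\hat\zeta}(n-k)$ — this is exactly guaranteed by the hypothesis $(n,k)\in PP(w)$, which forces $\Omega_{k,n}$, and hence the corresponding balls-in-boxes allocation with block sum $n-1$, to be nonempty. The condition $k = \alpha n + O(1)$ keeps the target within $O(1)$ of the mean, so the lower bound $c_0/\sqrt n$ is genuinely uniform in $n$. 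This is the discrete instance of the Broutin--Marckert principle that events which are rare for the unconditioned walk remain rare after conditioning on the sum.
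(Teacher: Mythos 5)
Your proof is correct and follows essentially the same route as the paper: the same decomposition of $S_{\hat\zeta}(n-k)$ into the first $m$ increments plus an independent second block, a uniform $O(1/\sqrt{n})$ bound on the point probabilities of the second-block sum (the paper cites Theorem 2.21 in Petrov's 1995 book for this where you invoke a uniform local limit bound), a lower bound $c_0/\sqrt{n}$ for $\P\big(S_{\hat\zeta}(n-k)=n-1\big)$ from the local limit theorem using $\E S_{\hat\zeta}(n-k)=n+O(1)$, and the collapse of the sum over $s$ to $\P(\mathfrak B)$. Your extra remarks on the lattice span and the attainability of $n-1$ via $(n,k)\in PP(w)$ only make explicit points the paper leaves implicit.
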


\begin{proof}
For the joint probability, we have 
\bal{
\P(\mathfrak{B} , S_{\hat\zeta}(n - k) = n - 1) &= \sum_{x = 0}^\infty \P\Big(\mathfrak{B} , S_{\hat\zeta}(m) =x\Big) \\
&\times \P\Big(S_{\hat\zeta}(n - k - m) = n - 1 - x)\Big) 
}
Hence, for the conditional probability, 
\bal{
\P(\mathfrak{B} | S_{\hat\zeta}(n - k) = n - 1) &= \sum_{x = 0}^\infty \P\Big(\mathfrak{B} , S_{\hat\zeta}(m) =x\Big) 
\\
&\times \frac{\P\Big(S_{\hat\zeta}(n - k - m) = n - 1 - x)\Big) }{\P(S_{\hat\zeta}(n - k) = n - 1)}.
}
By using the local limit law (Theorem I.4.2 in Kolchin \cite{kolchin84}) we have $\P(S_{\hat\zeta}(n - k) = n - 1) \geq c_1/ \sqrt{n}$ for some $c_1 >0$. By Theorem 2.21 on p.76 in Petrov \cite{petrov95}, $\sup_x\P\Big(S_{\hat\zeta}(n - k - m) = n - 1 - x)\Big) \leq c_2/\sqrt{n}$ for some $c_2 > 0$. Altogether, we get
\bal{
\P(\mathfrak{B} | S_{\hat\zeta}(n - k) = n - 1) \leq \frac{c_2}{c_1} \sum_{x = 0}^\infty \P\Big(\mathfrak{B} , S_{\hat\zeta}(m) =x\Big) = \frac{c_2}{c_1} \P(\mathfrak{B}).
}
\end{proof}

%

\begin{proof}[Proof of Theorem \ref{theoMostLikelyType}]
Let 
\bal{
n_{\xi}(i) &= \Big|\{j: \xi_j = i, 1 \leq j \leq n \}\Big|,\\
n_{\xi'}(i) &= \Big|\{j: \xi'_j = i, 1 \leq j \leq n - k \}\Big|,\\
n_{\hat\zeta}(i) &= \Big|\{j: \hat\zeta_j = i, 1 \leq j \leq n - k \}\Big|,
}
where sequences $(\xi_j), (\xi'_j)$ and $(\hat\zeta_j)$ are as defined in the definition of Algorithm A and in the proof of Theorem \ref{theoAlgorithm}. By Theorem \ref{theoAlgorithm}, the random variable $n_i(T)$ has the same distribution as $n_{\xi}(i)$. For $i = 0$, we have $n_{\xi}(0) = k$ and $k/(n - 1) \to \alpha$ by assumption, so $n_0(T)/(n - 1) \to \alpha = w_0^\ast$. For $i > 0$, we can see from the algorithm that it is enough to show that $n_{\xi'}(i)/(n - k) \to \wh w_i^\ast = w_i^\ast/(1 - \alpha)$ almost surely. 

Let us define additionally, 
\bal{
n_{\hat\zeta}^{(1/2)}(i) &= \Big|\Big\{j: \hat\zeta_j = i, 1 \leq j \leq \frac{n - k}{2} \Big\}\Big|,\\
n_{\xi'}^{(1/2)}(i) &= \Big|\Big\{j: \xi'_j = i, 1 \leq j \leq \frac{n - k}{2} \Big\}\Big|.
}
As before, for shortness of formulas, we define $m := \lfloor\frac{n - k}{2}\rfloor$.

Then, 
\bal{
\frac{n_{\hat\zeta}^{(1/2)}(i)}{m} = \frac{1}{m} \sum_{j = 1}^m \1[\hat\zeta_j = i],
}
which implies $\E\Big(\frac{n_{\hat\zeta}^{(1/2)}(i)}{m}\Big) = \wh w_i^\ast$. Then, by using the independence of random variables $\hat\zeta_j$,
\bal{
\E\Big|\frac{n_{\hat\zeta}^{(1/2)}(i)}{m} - \wh w_i^\ast\Big|^4 
= \frac{1}{m^4}\E \Big(\sum_{j = 1}^m (\1[\hat\zeta_j = i] - \wh w_j^\ast)\Big)^4 = O(m^{-2}). 
}
Then, by Markov's inequality:
\bal{
\P\bigg[\Big|\frac{n_{\hat\zeta}^{(1/2)}(i)}{m} - \wh w_i^\ast\Big| > \eps\bigg] = O\Big(\frac{1}{\eps^4 m^2}\Big)
}

By using Lemma \ref{lemma_uncond2cond}, we infer that this estimate holds also for the conditioned variables $\xi'_j$:
\begin{equation}
\label{equ_A}
\P\bigg[\Big|\frac{n_{\xi'}^{(1/2)}(i)}{m} - \wh w_i^\ast\Big| > \eps\bigg] = O\Big(\frac{1}{\eps^4 m^2}\Big).
\end{equation}
Let
\bal{
n_{\xi'}^{(2/2)}(i) = \Big|\Big\{j: \xi'_j = i, m < j \leq n - k\Big\}\Big|.
}
Since $\{\xi'_j\}_{j = 1}^n$ is an exchangeable sequence, we have 
\begin{equation}
\label{equ_B}
\P\bigg[\Big|\frac{n_{\xi'}^{(2/2)}(i)}{m} - \wh w_i^\ast\Big| > \eps\bigg] = O\Big(\frac{1}{\eps^4 m^2}\Big).
\end{equation}
Together, Eqs. (\ref{equ_A}) and (\ref{equ_B}) imply that 
\begin{equation}
\P\bigg[\Big|\frac{n_{\xi'}(i)}{n - k} - \wh w_i^\ast\Big| > \eps\bigg] = O\Big(\frac{1}{\eps^4 ((n - k)^2}\Big).
\end{equation}
So, by Borel--Cantelli lemma, as $n \to \infty$,
\bal{
\frac{n_{\xi'}(i)}{n - k} \to \wh w_i^\ast,
}
almost surely. This implies the statement of the theorem. 

\end{proof}

\section{Conclusion and Remaining Questions}
\label{section_conclusion}
In this paper, we investigated the scaling limit of Galton--Watson trees conditioned on a fixed number of vertices $n$ and leaves $k$. We considered the case when 
both $k$ and $n$ grow to infinity and $k = \alpha n + O(1)$, with $\alpha \in (0, 1 - 1/\hat\nu)$.  The offspring distribution is assumed to have the exponentially declining tails ($\E e^{a L} < \infty$ for some $a > 0$). 

In this case, we found that the limit under rescaling is the Aldous continuum random tree, and calculated the relevant parameter. We also showed that the height of a tree in $\Omega_{k, n}$ converges under rescaling to the distribution of the maximum of the Brownian excursion. Finally, we showed that the normalized degree sequence of the tree converges almost surely to the $\alpha$-shifted distribution. 

Some other questions remain open. First, what happens when the assumption of exponentially declining tails is relaxed? Similar to a comment in Marckert and Mokkadem\cite{marckert_mokkadem2003}, we can argue that some number of finite moments and the assumption $\E L = 1$  should be sufficient to ensure that our convergence results stay intact. 
However, the infinite variance of the offspring distribution or power tails with $\E L < 1$ will likely lead to considerably different results.

 Another question is what happens in the intermediate regime when both $k$ and $n$ grow but $k/n \sim n^{-\beta}$ or $\big(k/n -(1 -\wh \nu)\big)\sim n^{-\beta}$ with $0 < \beta < 1$. The first case is when the tree has very small but still growing number of leaves. The second case is about the regime when the number of leaves in the tree is close to the maximal capacity that the offspring distribution can sustain.  The results in the paper of Labarbe and Marckert \cite{labarbe_marckert2007} about Bernoulli random walks suggest that even in this case the limit is Aldous' Brownian continuum random tree, although the scaling factor may be different from $n^{1/2}$. 

  Besides the scaling limit in the Gromov--Hausdorff topology, it could also be interesting to study the characteristics of the conditioned trees in these situations, such as their limiting degree sequences.

 \bigskip
 
 \textbf{Acknowledgements} This work was supported by Simons Foundation through the program ``Mathematics and Physical Sciences--Travel Support for Mathematicians'', Award ID 523587. 
 
 The author is indebted to an anonymous referee for a very careful report with many constructive comments that helped to improve this article.

\textbf{Data availability} Data sharing is not applicable to this article as no datasets were generated or analyzed during the current study.

\section*{Declarations}

\textbf{Conflict of interest}
 The author declares that he has no conflict of interest.

\appendix
\section{Proof of a Local Limit Theorem}
First, recall some basic definitions. A random variable $X$ has a \emph{lattice distribution} if, with probability 1, it takes values in a set $\{a + k h\}$ where $k \in \mathbb Z$ and $a$ and $h > 0$ are fixed numbers. Number $h$ is called a \emph{step} of the distribution. A step $h$ is called the maximal step if there is no $a_1$ and $h_1 > h$ so that all values of $X$ are in the set $\{a_1 + k h_1\}$. 

For every integer random variable $\xi$, we use $\xi^{(A)}$ to denote a truncated version of $\xi$; that is, $\xi^{(A)}$ has the distribution
\bal{
\P(\xi^{(A)} = j) = \P(\xi = j | \xi \leq A).
}

\begin{theo}
\label{theo_local_limit}
Let $\xi_1, \xi_2, \ldots $ be a sequence of i.i.d non-negative integer random variables. Assume that for some $\alpha > 0$, $\E e^{\alpha \xi_1} < \infty$, and let $\E \xi_1 = a$, $\Var (\xi_1) = \sigma^2$. Suppose that the maximal step of the distribution of $\xi_1$ equals $1$. Let $A_1, A_2, \ldots $ be a sequence of positive numbers such that $A_N = \Omega(N^\eps)$ for some $\eps > 0$. Then as $N \to \infty$ and uniformly over integer $n$, 
\bal{
\sigma \sqrt{N}\P(\xi_1^{(A_N)} + \ldots + \xi_N^{(A_N)} = n) - \frac{1}{\sqrt{2 \pi}} e^{-\frac{(n - aN)^2}{2 \sigma^2 N}} \to 0.
}
\end{theo}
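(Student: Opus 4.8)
The plan is to prove this local limit theorem by the Fourier-analytic method, adapting the argument for Theorem I.4.2 in Kolchin \cite{kolchin84} so as to keep track of the fact that the summands $\xi_i^{(A_N)}$ have a distribution that varies with $N$. Write $p_j^{(N)} := \P(\xi_1^{(A_N)} = j)$, let $a_N := \E \xi_1^{(A_N)}$ and $\sigma_N^2 := \Var(\xi_1^{(A_N)})$, and let $\varphi_N(t) := \E e^{it\xi_1^{(A_N)}}$ be the characteristic function of a single truncated summand. Denoting $S_N := \xi_1^{(A_N)} + \ldots + \xi_N^{(A_N)}$, the inversion formula for integer-valued variables gives $\P(S_N = n) = \frac{1}{2\pi}\int_{-\pi}^{\pi} e^{-itn}\varphi_N(t)^N\,dt$, and the whole statement reduces to analyzing this integral after centering by $a_N$ and substituting $t = s/(\sigma_N\sqrt N)$.

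First I would control the moments. Since $\E e^{\alpha\xi_1} < \infty$, every moment $\E\xi_1^m$ is finite, and $\P(\xi_1 > A_N) \leq e^{-\alpha A_N}\E e^{\alpha\xi_1}$ decays exponentially; because $A_N = \Omega(N^\eps)$ this yields $\E(\xi_1^{(A_N)})^m \to \E\xi_1^m$ with an error $O(e^{-cA_N})$ for some $c > 0$, uniformly for all large $N$. In particular $a_N = a + O(e^{-cA_N})$, $\sigma_N^2 = \sigma^2 + O(e^{-cA_N})$, and the third absolute moment of the centered variable $\xi_1^{(A_N)} - a_N$ is bounded uniformly in $N$. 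Since $N e^{-cA_N} \leq N e^{-cN^\eps}\to 0$, the discrepancies between centering at $a_N N$ and at $aN$, and between $\sigma_N$ and $\sigma$, are negligible on the scale $\sqrt N$; this lets me replace $a_N,\sigma_N$ by $a,\sigma$ in the final Gaussian at no cost.

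Next I would split the integral into three ranges of $t$. On the central range $|t| \leq N^{-1/3}/\sigma_N$ (i.e.\ $|s| \leq N^{1/6}$), the uniform third-moment bound gives the Taylor estimate $\log\varphi_N(t) = ita_N - \tfrac12\sigma_N^2 t^2 + O(|t|^3)$ with implied constant independent of $N$, so after centering and scaling the integrand converges pointwise to $e^{-s^2/2}$ and is dominated by $e^{-s^2/4}$; dominated convergence then produces $\tfrac{1}{\sqrt{2\pi}}e^{-(n-a_N N)^2/(2\sigma_N^2 N)}$, uniformly in $n$. On the intermediate range $N^{-1/3}/\sigma_N \leq |t| \leq \delta$ I would use the uniform lower bound $\Re\big(1 - \varphi_N(t)e^{-ita_N}\big) \geq c' t^2$ valid for small $|t|$, which gives $|\varphi_N(t)|^N \leq e^{-c'Nt^2}$ and hence a contribution that vanishes after multiplication by $\sqrt N$.

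The main obstacle is the tail range $\delta \leq |t| \leq \pi$, where I must show $|\varphi_N(t)| \leq q < 1$ uniformly in $N$; this is exactly where the maximal-step-one hypothesis and the varying distribution interact. Since the maximal step of $\xi_1$ is $1$, there is a finite set $F$ of support points of $\xi_1$ such that $\gcd\{j - k : j,k \in F,\, j \neq k\} = 1$. For $N$ large enough that $A_N > \max F$ one has $p_j^{(N)} \geq p_j = \P(\xi_1 = j)$ for every $j \in F$, so from $1 - |\varphi_N(t)|^2 = 2\sum_{j<k} p_j^{(N)}p_k^{(N)}\big(1 - \cos((j-k)t)\big)$ I can bound $1 - |\varphi_N(t)|^2$ below by a quantity that is uniform in $N$ and strictly positive on $\delta \leq |t| \leq \pi$: the cosine sum over pairs from $F$ cannot vanish there, since $\cos((j-k)t) = 1$ for all such differences would force $t \in 2\pi\Z$ by the $\gcd$ condition, which the compact set excludes. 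This yields $|\varphi_N(t)|^N \leq q^N$, whose contribution multiplied by $\sigma_N\sqrt N$ tends to $0$. Combining the three ranges and undoing the normalizations of the second paragraph gives the claimed convergence, uniform in $n$.
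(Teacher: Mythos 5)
Your proposal is correct and follows the same Fourier-analytic skeleton as the paper's appendix proof (inversion formula for integer-valued variables, moment-convergence lemmas driven by the exponential tail and $A_N = \Omega(N^\eps)$, and a split of the inversion integral into central, intermediate and tail ranges), but it differs genuinely in the two technical spots where the $N$-dependence of the summands actually bites. For the central range, the paper works on a fixed window $[-A,A]$ (its integral $I_1$), shows that $(S_N - aN)/(\sigma\sqrt N)$ converges weakly to the standard normal via a pointwise analysis of $\log g_N(x)$, and then invokes the fact that weak convergence implies uniform convergence of characteristic functions on compacts, finishing with an $A\to\infty$ bookkeeping across four integrals $I_1,\ldots,I_4$; you instead take a window growing like $|s|\le N^{1/6}$, dominate the integrand by $e^{-s^2/4}$ using the uniform third-moment bound, and apply dominated convergence --- a more self-contained route that merges the paper's $I_1$, $I_2$ and part of $I_3$ and avoids the weak-convergence detour. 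For the tail range $\delta\le|t|\le\pi$, the paper first proves Lemma \ref{lemma_max_step} (truncation eventually preserves maximal step $1$) and then sets $q=\max_{\eps\le|t|\le\pi}|f_N(t)|<1$; as written that $q$ could a priori depend on $N$, and the needed uniformity in $N$ is left implicit. Your argument via the identity $1-|\varphi_N(t)|^2 = 2\sum_{j<k}p_j^{(N)}p_k^{(N)}\big(1-\cos((j-k)t)\big)$, a fixed finite set $F$ of support points whose pairwise differences have $\gcd$ equal to $1$, and the monotonicity $p_j^{(N)}\ge p_j$ for $j\in F$ once $A_N>\max F$, yields an explicitly $N$-uniform bound $|\varphi_N(t)|\le q<1$ on the compact set and renders the truncation-step lemma unnecessary --- a genuine strengthening of the paper's treatment. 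The one step you state loosely is the intermediate range: the bound $\Re\big(1-\varphi_N(t)e^{-ita_N}\big)\ge c't^2$ controls only the real part, so to conclude $|\varphi_N(t)|^N\le e^{-c'Nt^2}$ you should also record $\Im\big(\varphi_N(t)e^{-ita_N}\big)=O(|t|^3)$ uniformly in $N$ (the centered first moment vanishes and the third absolute moments are uniformly bounded); this is exactly the estimate the paper packages as Lemma \ref{lemma_estimate_fN}(b), and with it your three-range argument closes completely, uniformly in $n$.
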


Before starting the proof of the theorem, let us derive several useful lemmas.

\begin{lemma}
\label{lemma_max_step}
Suppose an integer-valued random variable $\xi \geq 0$ has maximal step $1$ Then there exists an $A_0$ such that for all $A > A_0$, $\xi^{(A)}$ has maximal step $1$.  
\end{lemma}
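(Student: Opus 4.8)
The plan is to translate the condition ``maximal step $1$'' into a statement about the greatest common divisor of the differences of the support of $\xi$, and then to observe that this gcd is already realized by finitely many support points. Since $\xi$ is integer-valued, saying that $\xi$ has maximal step $1$ is equivalent to saying that the set of differences $\{s - s' : s, s' \in \mathrm{supp}(\xi)\}$ has greatest common divisor $1$. Indeed, all values of $\xi$ lie in an arithmetic progression $\{a_1 + k h_1\}$ precisely when $h_1$ divides every such difference, so the condition that no $h_1 > 1$ works is exactly the condition that this gcd equals $1$. I would also record the elementary fact that the support of the truncated variable is $\mathrm{supp}(\xi^{(A)}) = \mathrm{supp}(\xi) \cap \{0, 1, \ldots, A\}$, since conditioning on $\{\xi \le A\}$ merely rescales the positive probabilities of the retained values and kills the rest.

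The key step is that the gcd of a set of positive integers is attained on a finite subset. I would enumerate the support of $\xi$ as $s_0 < s_1 < s_2 < \cdots$ and consider the partial gcds $g_m = \gcd(s_1 - s_0, \ldots, s_m - s_0)$. Each $g_{m+1}$ divides $g_m$, so $(g_m)$ is a nonincreasing sequence of positive integers; being bounded below by $1$, it stabilizes after finitely many terms, and its eventual value is exactly $\gcd\{s - s' : s, s' \in \mathrm{supp}(\xi)\} = 1$. Hence there is a finite index $m$ for which $\gcd(s_1 - s_0, \ldots, s_m - s_0) = 1$, an identity that uses only the finitely many support points $s_0, \ldots, s_m$.

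Finally I would set $A_0 = s_m$, the largest of these finitely many points. For every $A > A_0$ one has $\{s_0, \ldots, s_m\} \subseteq \mathrm{supp}(\xi) \cap \{0, \ldots, A\} = \mathrm{supp}(\xi^{(A)})$ and $\P(\xi \le A) > 0$, so $\xi^{(A)}$ is well defined and retains all of these points in its support. Since the differences $s_1 - s_0, \ldots, s_m - s_0$ therefore belong to the set of differences of $\mathrm{supp}(\xi^{(A)})$, the gcd of the latter divides $1$ and so equals $1$; by the reformulation in the first paragraph, $\xi^{(A)}$ has maximal step $1$, which is the claim. There is no genuine obstacle in this argument: the only points requiring care are the equivalence between ``maximal step $1$'' and ``gcd of support differences equal to $1$'' and the finite-stabilization of the gcd, both of which are elementary, so the proof is essentially the bookkeeping described above.
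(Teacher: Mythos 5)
Your proof is correct and follows essentially the same route as the paper's: both identify the maximal step with the gcd of a set of support differences (you use the differences $s_i - s_0$ from the minimum, the paper uses consecutive differences $x_i - x_{i-1}$ — an immaterial distinction since the two sets have the same gcd), both observe that this gcd is attained on finitely many support points (your stabilization of the partial gcds is just a cleaner phrasing of the paper's choice of a finite $k$ with $\gcd(y_1,\ldots,y_k)=1$), and both set $A_0$ to be the largest support point involved. No gaps; the argument is sound as written.
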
 
\begin{proof}
Let $x_0 < x_1 < \ldots $ be the values of $\xi$ and let $Y \in \mathbb Z^+$ be the set of differences $x_{i} - x_{i - 1}$ for all $i = 1, 2, \ldots$. Then, the maximal step $h = \gcd(Y)$. Indeed, if $h$ is a step then all elements in $Y$ are divisible by $h$, so $h$ is a divisor of $\gcd(Y)$. Conversely, every $x_j$ can be written as $x_0 + \sum_{i = 1} (x_i - x_{i - 1})$ and therefore belongs to the set $x_0 + k \gcd(Y)$, so $\gcd(Y)$ is a step. 

By our assumption $gcd(Y) = 1$. Order elements of $Y$ as $y_1, y_2, \ldots $. Then for some finite $k$, $gcd(y_1, \ldots, y_k) = 1$. Take an $m$ such that all $y_i$ with $1 \leq i \leq k$ belong to the set of differences $x_{j} - x_{j - 1}$ with $1 \leq j \leq m$. Take $A_0 = x_m$. Then for every random variable $\xi^{(A)}$ with $A \geq A_0 = x_m$, the corresponding set of differences $Y^{(A)}$ contains all numbers $y_1, \ldots, y_k$ and therefore $gcd(Y^{(A)}) = 1$. This implies that the maximal step of $\xi^{(A)}$ equals 1.
\end{proof}
  
 Now let $\xi_i$, $A_N$, $\xi_i^{(A_N)}$ be as assumed in the statement of Theorem \ref{theo_local_limit}. In addition, let $a_N:= \E \xi_1^{(A_N)}$, $\sigma^2_N := \Var(\xi_1^{(A_N)})$, $m_3 := \E (|\xi_1 - a|^3)$, and $m_{3, N} :=  \E (|\xi_1^{(A_N)} - a_N|^3)$.
 
 \begin{lemma}
 \label{lemma_convergence_moments}
 Under assumptions on $\xi_i$, $A_N$, as in Theorem \ref{theo_local_limit}, for $N \to \infty$, we have:
 \begin{enumerate}[(a)]
 \item $(a_N - a)N^r \to 0$, for every $r > 0$, 
 \item $\sigma_N - \sigma \to 0$, 
 \item $m_{3, N} - m_{3} \to 0$.  
 \end{enumerate}
 \end{lemma}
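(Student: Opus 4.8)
The plan is to handle all three parts through one mechanism: since $\xi_1$ has exponentially declining tails, truncating at level $A_N = \Omega(N^\eps)$ discards only a super-polynomially small fraction of the total probability and of every fixed moment. I would first record the tail bound. From $\E e^{\alpha \xi_1} < \infty$ and Markov's inequality, $\P(\xi_1 > A) \le e^{-\alpha A}\,\E e^{\alpha \xi_1}$, and for each fixed $k \ge 0$ and each $\alpha' < \alpha$,
\[
\sum_{j > A} j^k\,\P(\xi_1 = j) \le C_k\, e^{-\alpha' A},
\]
since $j^k e^{-\alpha j} \le C_k e^{-\alpha' j}$ and the geometric tail sums. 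Writing $p_A := \P(\xi_1 \le A)$, we have $1 - p_A \le C e^{-\alpha A}$ and $p_A \to 1$.

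The single key comparison is the following. For any function $g$ of at most polynomial growth, the identity $\E g(\xi_1^{(A)}) = \E[g(\xi_1)\1[\xi_1 \le A]]/p_A$ yields
\[
\E g(\xi_1^{(A)}) - \E g(\xi_1) = \frac{(1 - p_A)\,\E g(\xi_1) - \E[g(\xi_1)\1[\xi_1 > A]]}{p_A}.
\]
Both terms in the numerator are controlled by the tail estimates above evaluated at $A = A_N$, hence are $O(e^{-\alpha' A_N}) = O(e^{-c N^\eps})$, while the denominator tends to $1$. Thus $\E g(\xi_1^{(A_N)}) - \E g(\xi_1)$ decays faster than any power of $1/N$.

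Parts (a) and (b) are then immediate. Taking $g(x) = x$ shows $a_N - a = O(e^{-c N^\eps})$, so $(a_N - a)N^r \to 0$ for every $r > 0$. Taking $g(x) = x^2$ gives $\E[(\xi_1^{(A_N)})^2] \to \E[\xi_1^2]$, and combined with $a_N \to a$ we obtain $\sigma_N^2 = \E[(\xi_1^{(A_N)})^2] - a_N^2 \to \E[\xi_1^2] - a^2 = \sigma^2$; continuity of $t \mapsto \sqrt{t}$ on $[0,\infty)$ then gives $\sigma_N \to \sigma$.

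The step requiring the most care is (c), because the centering $a_N$ drifts with $N$. Here I would first apply the key comparison with $g(x) = |x - a|^3$ to get $\E|\xi_1^{(A_N)} - a|^3 \to m_3$, and then replace the center $a$ by $a_N$. Since $t \mapsto |t|^3$ has derivative $3t|t|$, the mean value theorem gives the pointwise estimate
\[
\big|\,|x - a_N|^3 - |x - a|^3\,\big| \le 3\,|a_N - a|\,\big(|x - a| + |a_N - a|\big)^2 .
\]
Taking expectations under $\xi_1^{(A_N)}$ and using that $\E\big[(\xi_1^{(A_N)} - a)^2\big]$ stays bounded (it converges to $\sigma^2$) while $a_N - a \to 0$, this correction vanishes. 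Hence $m_{3,N} = \E|\xi_1^{(A_N)} - a_N|^3 \to m_3$, which completes the argument.
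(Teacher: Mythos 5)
Your proof is correct and takes essentially the same approach as the paper: the exponential tail bound makes both the discarded tail mass and the renormalization factor $1/p_{A_N}$ of the truncation super-polynomially close to their limits, so every fixed moment of $\xi_1^{(A_N)}$ converges to that of $\xi_1$ faster than any power of $1/N$. The paper carries this out explicitly only for part (a) and declares (b) and (c) ``similar''; your unified comparison identity for polynomial-growth $g$ and the mean value theorem estimate handling the drifting center $a_N$ in (c) simply supply, correctly, the details the paper leaves to the reader.
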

 \begin{proof}
 Recall that for $A_N \to \infty$, and $\alpha > 0$, 
 \bal{
 \int_{A_N}^\infty x e^{-\alpha x} dx = O(A_N e^{-\alpha A_N}). 
 }
 Then, 
 \bal{
 N^r \E|\xi_1 - \xi_1 1(X \leq A_N)| = N^r \sum_{j > A_N} j P(\xi_1 = j)  
 = N^r O\Big(\sum_{j > A_N} j e^{-\alpha j} \Big), 
 }
 and by comparison with the integral we find that 
 \begin{equation}
 \label{truncation1}
  N^r \E|\xi_1 - \xi_1 1(X \leq A_N)|= O(N^r A_N e^{-\alpha A_N}) \to 0, 
  \end{equation}
  as $N \to \infty$, under assumption that $A_N = \Omega(N^\eps)$. 
  
  Then, 
  \begin{eqnarray}
   N^r \E|\xi_1 1(X \leq A_N) - \xi_1^{(A_N)}| &= N^r \sum_{j \leq A_N} j \P(\xi = j) \Big|1 - \frac{1}{\sum_{j \leq A_N} \P(\xi = j)}\Big| \notag
   \\
   \label{truncation2}
   &= N^r O\Big(\sum_{j > A_N} \P(\xi = j)\Big) = N^r O(e^{-\alpha N}) \to 0, 
  \end{eqnarray}
  as $N \to \infty$. 
  
 From (\ref{truncation1}) and (\ref{truncation2}) by the triangle inequality we find that 
 \bal{
  N^r \E|\xi_1 - \xi_1^{(A_N)}|\to 0,
 }
 which implies the first claim of the lemma. Clearly, the second and the third claims can be proved by similar arguments. 
 \end{proof}
 
 Finally, we need an estimate on characteristic functions. Let $f_N(t)$ be a characteristic function of $\xi_1^{(A_N)}$ and let 
 \begin{equation}
 \label{tilde_fN}
 \tilde f_N(t) = f_N(t) e^{-i a_N t}
 \end{equation}
  be the characteristic function of the centered r.v. $\xi_1^{(A_N)} - a_N$. 
 
 \begin{lemma}
 \label{lemma_estimate_fN}
  Under assumptions on sequences $(\xi_i)_{i = 1}^\infty$, $(A_N)_{N = 1}^\infty$, as in Theorem \ref{theo_local_limit}, we have:
  \begin{enumerate}[(a)]
  \item
  $
  \tilde f_N(t) = 1 - \frac{\sigma_N^2 t^2}{2} + O(|t|^3),
  $
  with the constant in $O$ term not depending on $N$. 
  \item
  there exists an $\eps > 0$ and $N_0$, such that if $|t| < \eps$ and $N > N_0$, then 
  \bal{
  |\tilde f_N(t)| 
  \leq e^{-\sigma^2 t^2/4}.
  } 
  \end{enumerate}
 \end{lemma}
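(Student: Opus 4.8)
The plan is to treat the two parts separately, deriving (b) as a direct consequence of (a).

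For part (a), I would view $\tilde f_N$ as the characteristic function of the centered random variable $Y_N := \xi_1^{(A_N)} - a_N$, which has mean $0$, variance $\sigma_N^2$, and finite third absolute moment $m_{3,N}$ (finite because $\xi_1^{(A_N)}$ is bounded by $A_N$). The standard Taylor expansion of a characteristic function possessing three finite moments gives
\[
\tilde f_N(t) = 1 - \frac{\sigma_N^2 t^2}{2} + \theta_N \frac{m_{3,N}}{6}\,|t|^3, \qquad |\theta_N| \leq 1.
\]
The only delicate point is that the $O(|t|^3)$ constant must not depend on $N$; since the truncation level $A_N$ changes with $N$, the law of $Y_N$ changes too. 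This is exactly what Lemma \ref{lemma_convergence_moments}(c) resolves, as it guarantees $m_{3,N} \to m_3$ and hence $\sup_N m_{3,N} < \infty$. Setting $C := \tfrac{1}{6}\sup_N m_{3,N}$ then yields the claim with a constant independent of $N$.

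For part (b), I would start from the expansion in (a). By Lemma \ref{lemma_convergence_moments}(b) we have $\sigma_N^2 \to \sigma^2$, so there is an $N_0$ with $\tfrac{3}{4}\sigma^2 \leq \sigma_N^2 \leq 2\sigma^2$ for all $N > N_0$. For $|t|$ small enough the quantity $1 - \tfrac{\sigma_N^2 t^2}{2}$ is positive, so that
\[
|\tilde f_N(t)| \leq 1 - \frac{\sigma_N^2 t^2}{2} + C|t|^3 \leq 1 - \frac{3\sigma^2 t^2}{8} + C|t|^3.
\]
Using the elementary inequality $1 - y \leq e^{-y}$ with $y = \tfrac{3\sigma^2 t^2}{8} - C|t|^3$, it then suffices to arrange $\tfrac{3\sigma^2 t^2}{8} - C|t|^3 \geq \tfrac{\sigma^2 t^2}{4}$, which reduces to $C|t| \leq \tfrac{\sigma^2}{8}$. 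Choosing $\eps := \tfrac{\sigma^2}{8C}$ (and shrinking it further if needed so that $1 - \tfrac{\sigma_N^2 t^2}{2} > 0$ on $|t| < \eps$) delivers $|\tilde f_N(t)| \leq e^{-\sigma^2 t^2/4}$ for all $|t| < \eps$ and $N > N_0$.

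The only genuine obstacle is the uniformity in $N$ of the cubic remainder in part (a); once Lemma \ref{lemma_convergence_moments} supplies the convergence, and hence uniform boundedness, of the truncated moments, both parts follow from routine real-analysis estimates. Part (b) is then essentially a matter of matching the coefficient $\tfrac{1}{2}\cdot\tfrac{3}{4}\sigma^2 = \tfrac{3}{8}\sigma^2$ of $t^2$ against the target coefficient $\tfrac{1}{4}\sigma^2$ and absorbing the cubic term into the leftover quadratic slack $\tfrac{\sigma^2 t^2}{8}$.
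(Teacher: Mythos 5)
Your proof is correct and takes essentially the same route as the paper's: part (a) is the standard third-moment Taylor bound $\big|\tilde f_N(t)-(1-\sigma_N^2 t^2/2)\big|\le m_{3,N}|t|^3/6$, made uniform in $N$ via Lemma \ref{lemma_convergence_moments}(c), and part (b) combines it with $\sigma_N^2\to\sigma^2$ and the elementary inequality $1-y\le e^{-y}$. The only (cosmetic) difference is the bookkeeping in (b): the paper expands around $1-\sigma^2 t^2/2$ and absorbs $|\sigma_N^2-\sigma^2|t^2/2$ into an error term bounded by $\sigma^2 t^2/4$, whereas you retain $\sigma_N^2$, bound it below by $\tfrac34\sigma^2$ for $N>N_0$, and absorb the cubic term by the explicit choice $C|t|\le\sigma^2/8$ --- both yield the same estimate.
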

 
 \begin{proof}
 By using Taylor's formula with remainder, we write:
 \bal{
 \Big|\tilde f_N(t) - \big(1 - \frac{\sigma_N^2 t^2}{2} \big)\Big| \leq \frac{m_{3, N} |t|^3}{6},
 } 
 which implies the first claim of the lemma in view of Lemma \ref{lemma_convergence_moments}(c). In addition, we have 
  \begin{equation}
  \label{estimate_tilde_fN}
 \Big|\tilde f_N(t) - \big(1 - \frac{\sigma^2 t^2}{2} \big)\Big| \leq
 \frac{|\sigma_N^2 - \sigma^2|t^2}{2} +  \frac{|m_{3, N} - m_3||t|^3}{6}+ \frac{m_{3} |t|^3}{6},
\end{equation} 
 By using Lemma  \ref{lemma_convergence_moments}(b) and (c), we can find $N_0$ and $\eps > 0$ such that for $|t| < \eps$ and $N > N_0$ the right-hand side of
 (\ref{estimate_tilde_fN}) is smaller than $\frac{\sigma^2 t^2}{4}$, and that $\frac{\sigma^2 t^2}{2} < 1.$
 Then,
 \bal{
 |\tilde f_N(t)| &= \Big|\tilde f_N(t) - \big(1 - \frac{\sigma^2 t^2}{2} \big)+ \big(1 - \frac{\sigma^2 t^2}{2} \big)\Big|
 \\
 &\leq  \frac{\sigma^2 t^2}{4} + (1 - \frac{\sigma^2 t^2}{2} \big) \leq 1 - \frac{\sigma^2 t^2}{4} \leq e^{-\sigma^2 t^2/4}.
 }
 \end{proof}
 
 Now we begin the proof of Theorem \ref{theo_local_limit}. Let 
 \bal{
 P_N(n) = \P(\xi_1^{(A_N)} + \ldots + \xi_N^{(A_N)} = n).
 }
 Since the characteristic function of $\xi_1^{(A_N)}$ is $f_N(t)$, the characteristic function of $S_N = \xi_1^{(A_N)} + \ldots + \xi_N^{(A_N)}$ equals 
 \bal{
 [f_N(t)]^N = \sum_{n = -\infty}^{\infty} P_N(n) e^{i t n}. 
 }
 By inversion formula, 
 \bal{
 P_N(n) = \frac{1}{2\pi} \int_{-\pi}^{\pi} e^{-i t n} [f_N(t)]^N\, dt.
 }
 By using  formula (\ref{tilde_fN}), parameterization $n = a N + z \sigma \sqrt{N}$, and the change of variable $x = t \sigma \sqrt N$, we rewrite this expression as 
  \bal{
 P_N(n) &= \frac{1}{2\pi} \int_{-\pi}^{\pi} e^{-i t (a - a_N) N} e^{-i t z \sigma \sqrt{N}}  [\tilde f_N(t)]^N\, dt
 \\
 &= \frac{1}{2\pi} \int_{-\pi \sigma \sqrt N}^{\pi \sigma \sqrt N} e^{-i x \frac{(a - a_N)}{\sigma} \sqrt{N}} e^{-i x z}  \Big[\tilde f_N\big(\frac{x}{\sigma \sqrt N}\big)\Big]^N\, \frac{dx}{\sigma \sqrt N}
 }
 Since, by inversion formula, 
 \bal{
 \frac{1}{\sqrt{ 2\pi }} e^{-z^2/2} 
 = \frac{1}{2\pi} \int_{-\infty}^{\infty} e^{-ixz - x^2/2} \, dx,  
 }
 we can write the difference 
 \bal{
 R_N = 2 \pi \Big(\sigma \sqrt N P_N(n) - \frac{1}{\sqrt{2 \pi}} e^{-z^2/2}\Big),
 }
 as a sum of four integrals:
 \bal{
 I_1 &= \int_{-A}^{A} e^{-i x z}\Big(e^{-i x \frac{(a - a_N)}{\sigma} \sqrt{N}}  \Big[\tilde f_N\big(\frac{x}{\sigma \sqrt N}\big)\Big]^N - e^{-x^2/2}\Big) \, dx, 
 \\
 I_2 &= - \int_{|x| \geq A} e^{-i x z - x^2/2}\, dx, 
 \\
 I_3 &= \int_{A \leq |x| \leq \eps \sigma \sqrt N} e^{-i x z}\Big(e^{-i x \frac{(a - a_N)}{\sigma} \sqrt{N}}  \Big[\tilde f_N\big(\frac{x}{\sigma \sqrt N}\big)\Big]^N\Big) \, dx,
 \\
 I_4 &= \int_{\eps \sigma \sqrt N \leq |x| \leq \pi \sigma \sqrt N} e^{-i x z}\Big(e^{-i x \frac{(a - a_N)}{\sigma} \sqrt{N}}  \Big[\tilde f_N\big(\frac{x}{\sigma \sqrt N}\big)\Big]^N\Big) \, dx,
 }
 where $A$ and $\eps$ will be chosen later. 
 
 In order to show that $R_N \to 0$ for $N \to \infty$, we take an arbitrary $\delta > 0$ and show that by a choice of sufficiently large $N$ the difference can be made smaller than $\delta$. 
 
 Since $|I_2| \leq \int_{|x| \geq A} e^{-x^2/2} \, dx$, it can be made arbitrarily small by a choice of a sufficiently large $A$. 
 
 Next, we can choose a sufficiently small $\eps$ such that for all $N \geq N_0$,  Lemma  \ref{lemma_estimate_fN}(b) applies and for $|x| \leq \eps \sigma \sqrt N$, 
 \bal{
\Big| \tilde f_N\big(\frac{x}{\sigma \sqrt N}\big)\Big| \leq \exp \Big( - \frac{\sigma^2}{4} \big(\frac{x}{\sigma \sqrt N}\big)^2\Big) = e^{- x^2/(4N)}.
 }
 Then, we can estimate $I_3$:
 \bal{
 |I_3|\leq \int_{A \leq |x| \leq \eps \sigma \sqrt N} \Big|\tilde f_N\big(\frac{x}{\sigma \sqrt N}\big)\Big| ^N \, dx \leq \int_{A \leq |x|} e^{-x^2/4}\, dx,
 }
 which can be made arbitrarily small by a choice of a sufficiently large $A$. 
 
 Fix $A$ and $\eps$ so that $|I_2|< \delta/4$ and $|I_3 |<\delta/4$ for $N > N_0$.
 
 Next, we address $I_1$. Let 
 \bal{
 g_N(x) := \Big[ e^{- i\frac{a - a_N}{\sigma \sqrt N} x } \tilde f_N\big(\frac{x}{\sigma \sqrt N}\big)\Big]^N,
 }
 and note that this is the characteristic function of $(S_N - a N)/(\sigma \sqrt N)$.  We claim that the distribution of this random variable converges to the normal distribution with parameters $(0, 1)$. Indeed, by using  Lemma \ref{lemma_estimate_fN}(a) and Lemmas  \ref{lemma_convergence_moments}(a) and (b), we find that for a fixed $x$ and $N \to \infty$, 
 \bal{
 \log g_N(x) &= N  \Big[ - i\frac{a - a_N}{\sigma \sqrt N} x + \log\Big( \tilde f_N\big(\frac{x}{\sigma \sqrt N}\big)\Big)\Big]
 \\
 &= - i\frac{a - a_N}{\sigma} \sqrt N x + N \log\Big( 1 - \frac{\sigma_N^2 x^2}{2\sigma^2 N} + o\big(\frac{1}{N})\Big)
 \\
 &= -\frac{x^2}{2} + o(1), 
 }
 which implies that $g_N(x) \to e^{-x^2/2}$ for every fixed $x$, and therefore, the distribution of $(S_n - a N)/(\sigma \sqrt N)$ converges weakly to the standard normal distribution. Now, we invoke the fact that the weak convergence of probability measures implies the uniform convergence of characteristic functions on every finite interval. This implies that for every $A$, integral $I_1$ converges to zero as $N\to \infty$.  
 
 It remains to estimate integral $I_4$. We have
 \bal{
 |I_4| \leq \int_{\eps \leq \frac{|x|}{\sigma \sqrt N} \leq \pi} \Big|\tilde f_N \big(\frac{x}{\sigma \sqrt N}\big)\Big|^N \, dx 
 = \sigma \sqrt N \int_{\eps \leq |t| \leq \pi} \big| f_N(t)|^N \, dt. 
 }
 By Lemma \ref{lemma_max_step}, the maximal step of $\xi_1^{(A_N)}$ equals $1$ for all sufficiently large $N$, which implies that 
 \bal{
 \max_{\eps \leq |t| \leq \pi} |f_N(t)| = q < 1. 
 } 
 Therefore, $|I_4| \leq \sigma \sqrt N 2 \pi q^N$ and $I_4 \to 0$ as $N \to \infty$.
 
 The estimates of integrals $I_1$ and $I_4$ show that there exists $N_1$ such that $|I_1| \leq \delta/4$ and $|I_4| \leq \delta/4$ for $N > N_1$. Hence, the difference $R_N$ for $N \to \infty$ approaches $0$ uniformly over integer $n$.


\end{document}